\title{Log Rigid Syntomic Cohomology\\for Strictly Semistable Schemes}
\author{Kazuki Yamada}
\DeclareMathOperator{\Spec}{\mathrm{Spec}}
\DeclareMathOperator{\Proj}{\mathrm{Proj}}
\DeclareMathOperator{\Spwf}{\mathrm{Spwf}}
\DeclareMathOperator{\Hom}{\mathrm{Hom}}
\DeclareMathOperator{\Ext}{\mathrm{Ext}}
\DeclareMathOperator{\iHom}{\underline{\mathrm{Hom}}}
\DeclareMathOperator{\cHom}{\underline{\mathrm{Hom}}^{\bullet}}
\DeclareMathOperator{\Cone}{\mathrm{Cone}^{\bullet}}
\DeclareMathOperator{\Ker}{\mathrm{Ker}}
\DeclareMathOperator{\Coker}{\mathrm{Coker}}
\DeclareMathOperator{\cKer}{\mathrm{Ker}^{\bullet}}
\DeclareMathOperator{\cCoker}{\mathrm{Coker}^{\bullet}}
\def\Sm{(\mathrm{Sm}_K)^{\mathrm{op}}}
\def\MM{\mathcal{MM}_K}
\def\Sh{\mathrm{Sh}}
\def\rG{R\Gamma}
\def\RG{\mathbb{R}\Gamma}
\def\Vec{\mathrm{Vec}}
\def\log{\mathrm{log}}
\def\Ab{\mathrm{Ab}}
\def\cZ{\mathbb{Z}}
\def\cN{\mathbb{N}}
\def\cQ{\mathbb{Q}}
\def\cC{\mathbb{C}}
\def\dR{\mathrm{dR}}
\def\rig{\mathrm{rig}}
\def\Hdg{\mathrm{Hdg}}
\def\syn{\mathrm{syn}}
\def\Gd{\mathrm{Gd}}
\def\Vec{\mathrm{Vec}}
\def\an{\mathrm{an}}
\def\pHC{p\mathrm{HC}_K}
\def\pHK{p\mathrm{HK}_K}
\def\pHD{p\mathrm{HD}_K}
\def\MF{\mathrm{MF}^{\mathrm{ad}}_K(\phi,N)}
\def\MFG{\mathrm{MF}^{\mathrm{ad}}_K(\phi,N,G_K)}
\def\MFC{\mathrm{MF}^{\mathrm{ad}}_K(\phi)}
\def\incl{\ar@{^{(}-{>}}}
\def\lincl{\ar@{_{(}-{>}}}
\def\comm{\ar@{}|\circlearrowleft}
\DeclareMathOperator{\Tot}{\mathrm{Tot}}
\DeclareMathOperator{\Ass}{\mathrm{Ass}^{\bullet}}
\DeclareMathOperator{\Ind}{\mathrm{Ind}}
\def\GL{\mathrm{GL}}
\def\BGL{\mathrm{B}_{\bullet}\mathrm{GL}}
\theoremstyle{definition}
\newtheorem{theorem}{Theorem}[subsection]
\newtheorem{proposition}[theorem]{Proposition}
\newtheorem{lemma}[theorem]{Lemma}
\newtheorem{definition}[theorem]{Definition}
\newtheorem{remark}[theorem]{Remark}
\begin{document}
\maketitle
{\small
\begin{center}
ABSTRACT
\end{center}
\vspace{-10pt}
We construct log rigid syntomic cohomology for strictly semistable schemes over the ring of integers of a $p$-adic field, and prove that it is interpreted as the extension group of the complex of admissible filtered $(\phi,N)$-modules.}
\tableofcontents
\section{Introduction}
\subsection{Main result}
Let $K$ be a $p$-adic field with the ring of integers $V$. For a smooth scheme $\mathcal{X}$ over $V$, the rigid syntomic cohomology group $\widetilde{H}_{\syn}^i(\mathcal{X},n)$ was defined firstly by Gros \cite{Gros} under certain assumptions, and then by Besser \cite{Besser} in general. Note that the notation $\widetilde{H}_{\syn}$ is different from \cite{Gros} and \cite{Besser}. Bannai \cite{Bannai} proved that the rigid syntomic cohomology can be interpreted as the absolute $p$-adic Hodge cohomology. His construction is improved by Chiarellotto, Ciccioni, and Mazzari \cite{CCM} using generalized Godement resolution.

Absolute cohomology means the extension group of geometric cohomology complex. See Table \ref{table} of the bottom of this page for example.
\begin{table}[b]\caption{geometric and absolute cohomology}\label{table}{\footnotesize
\begin{tabular}{|c|c|c|c|}\hline
$X$&geometric cohomology&absolute cohomology&realization category\\ \hline \hline
\begin{tabular}{c}over a field\\ of char. $\neq \ell$\end{tabular}&$\ell$-adic \'{e}tale of $\overline{X}$&$\ell$-adic \'{e}tale of $X$&{\begin{tabular}{c}$\ell$-adic representations of\\ the absolute Galois group\end{tabular}}\\ \hline
\begin{tabular}{c}smooth\\ over $\cC$\end{tabular}&{\begin{tabular}{c}Hodge\\ \rotatebox{90}{$=$}\\ Betti $+$ de Rham\end{tabular}}&{\begin{tabular}{c}absolute Hodge\\ \rotatebox{90}{$=$}\\ Deligne\end{tabular}}&mixed Hodge structures\\ \hline
{\begin{tabular}{c}smooth\\ over $V$\end{tabular}}&{\begin{tabular}{c}$p$-adic Hodge\\ \rotatebox{90}{$=$}\\ rigid $+$ de Rham\end{tabular}}&{\begin{tabular}{c}absolute $p$-adic Hodge\\ \rotatebox{90}{$=$}\\ rigid syntomic\end{tabular}}&{\begin{tabular}{c}admissible filtered\\ $(\phi,N)$-modules\end{tabular}}\\ \hline
\end{tabular}}\end{table}
Let $\mathrm{Sm}_K$ be the category of smooth varieties over $K$. Conjecturally, there should exist the rigid abelian tensor category $\MM$ of mixed motives over $K$ with a functor $h:\Sm\rightarrow D^b(\MM)$, satisfying some desirable properties. For example, for any cohomology theory $*$ satisfying some axioms with an abelian category $\mathcal{A}$ and a functor $\rG_*:\Sm\rightarrow D^b(\mathcal{A}_*)$, there is the realization functor $\mathcal{R}_*:\MM\rightarrow\mathcal{A}_*$ such that its derived functor $D\mathcal{R}_*:D^b(\MM)\rightarrow D^b(\mathcal{A}_*)$ satisfies $\rG_*=D\mathcal{R}_*\circ h$. Then $D\mathcal{R}_*$ induces the map
\begin{eqnarray}
D\mathcal{R}_*:\Ext^i_{D^b(\MM)}(1,h(X)(n))&\rightarrow&\Ext^i_{D^b(\mathcal{A}_*)}(1,D\mathcal{R}_*\circ h(X)(n))\\ \label{realization}
\nonumber&=&\Ext^i_{D^b(\mathcal{A}_*)}(1,\rG_*(X)(n)).
\end{eqnarray}
The motivic cohomology $H^i_{\mathcal{M}}(X,n)$ and the absolute cohomology $H_{\mathrm{abs}-*}^i(X,n)$ for $*$ are defined to be the left and right hand side of (1.1) respectively.
Note that the triangulated category having desired properties which $D^b(\MM)$ should have is constructed by Voevodsky \cite{Voevodsky}. For example the equality
\[\Ext^i_{D^b(\MM)}(1,h(X)(n))=K_{2n-i}(X)_{\cQ}^{(n)},\]
holds, where the right hand side is the eigenspace of Adams operation on the rational $K$-group.

Absolute cohomology should be related with the values of ($p$-adic) $L$-functions in the context of ($p$-adic) Beilinson conjectures. For smooth projective varieties having good reductions at $p$, $p$-adic Beilinson conjecture was formulated by Perrin-Riou \cite{PR}. It can be interpreted through the rigid syntomic regulator map
\[r_{\syn}:K_i(\mathcal{X})\rightarrow \widetilde{H}^{2n-i}_{\syn}(\mathcal{X},n)\]
constructed by Besser \cite {Besser}, and proved in some special cases (cf. \cite{BBdJR}, \cite{BK}). 
Conjecturally, $r_{\syn}$ should commute with $D\mathcal{R}_{\Hdg}$ in the sense of section \ref{philosophy}.
In the general setting including bad reduction, $p$-adic Beilinson conjecture has not yet been formulated.

The purpose of this paper is to define the log rigid syntomic cohomology for strictly semistable schemes as the absolute $p$-adic Hodge cohomology.
For a strictly semistable scheme $\mathcal{X}$, Gro\ss e-Kl\"{o}nne \cite{Grosse1} introduced the log rigid cohomology of the special fiber on which the Frobenius operator and monodromy operator act. On the other hand, the de Rham cohomology of the generic fiber has the Hodge filtration. Gluing these complexes, we get the $p$-adic Hodge complex $\RG_{\Hdg}(\mathcal{X})$ which is an object of the certain triangulated category $\pHC$. The derived category $\pHD$ of $\pHC$ exists, and we define the log rigid syntomic cohomology group with Tate $n$-twist by
\[H^i_{\syn}(\mathcal{X},n)=\Ext^{i}_{\pHD}(K_0,\RG_{\Hdg}(\mathcal{X})(n)).\]
The equivalence $\Theta:D^b(\MF)\rightarrow\widetilde{\pHD}$ in Theorem \ref{cateqthm} gives a validity of this definition. Namely, when $\mathcal{X}$ satisfies Hyodo-Kato condition (HK) in section \ref{section4.1}, $H^i_{\syn}(\mathcal{X},n)$ is regarded as the extension group of a complex of admissible filtered $(\phi,N)$-modules associated to $\mathcal{X}$.

In \cite{NN}, Nekov\'{a}\v{r} and Nizio{\l} defined the syntomic cohomology for varieties over $K$ using crystalline cohomology and $h$-theory. Their results are very general and contain the situation of this paper. An advantage of (log) rigid syntomic cohomology is that (log) rigid cohomology is defined as de Rham-type cohomology of an analytic space, unlike \'{e}tale and $h$-cohomology. So it is hopefully more amenable to explicit calculation, and we will be able to use $p$-adic analysis to relate it with the values of $L$-functions. Since (log) rigid syntomic cohomology should have suitable properties for non-proper schemes, it would be usable to construct the $p$-adic realization of polylogarithms.

\subsection{Philosophy of $p$-adic Hodge cohomology}\label{philosophy}
Let $G_K$ be the absolute Galois group of $K$. Let $\MFC$, $\MF$, and $\MFG$ be the categories of admissible filtered $\phi$-modules, $(\phi,N)$-modules, and $(\phi,N,G_K)$-modules defined by Fontaine. Note that they are equivalent to the categories of crystalline, semistable, and de Rham representations of $G_K$.

For a strictly semistable scheme $\mathcal{X}$ satisfying (HK), $\RG_{\Hdg}(\mathcal{X})$ in $\pHD$ should be depend only on the generic fiber $X$. Moreover there should exist $p$-adic Hodge cohomology theory for smooth varieties over $K$ with
\[\rG_{\Hdg}:(\mathrm{Sm}_K)^{\mathrm{op}}\rightarrow D^b(\MFG)\]
such that $\rG_{\Hdg}(X)$ is in $D^b(\MF)$ and correspond to $\RG_{\Hdg}(\mathcal{X})$ by $\Theta$. Fontaine and Perrin-Riou's observation of cohomology of filtered modules suggests
\[\Ext^i_{D^b(\MFG)}(K^{\mathrm{ur}}_0,\rG_{\Hdg}(X)(n))=\Ext^i_{D^b(\MF)}(K_0,\rG_{\Hdg}(X)(n)),\]
namely
\begin{equation}\label{synabs}
H^i_{\mathrm{abs}-\Hdg}(X,n)=H^i_{\syn}(\mathcal{X},n).
\end{equation}

For a smooth scheme $\mathcal{X}$ satisfying (HK), $\rG_{\Hdg}(X)$ will be in $D^b(\MFC)$, and we should have 
\[\widetilde{H}_{\syn}^i(\mathcal{X},n)=\Ext^i_{D^b(\MFC)}(K_0,\rG_{\Hdg}(X)(n)).\]
Let $H^i_{\mathcal{M}}(X,n)_{\cZ}$ be the integral part of the motivic cohomology of $X$, defined by the image of the canonical map $K_{2n-i}(\mathcal{X})\rightarrow H^i_{\mathcal{M}}(X,n)$. Then $r_{\syn}$ above passes through
\[r_{\syn}:H^i_{\mathcal{M}}(X,n)_{\cZ}\rightarrow\widetilde{H}_{\syn}^i(\mathcal{X},n),\]
and it commutes with
\[D\mathcal{R}_{\Hdg}:H^i_{\mathcal{M}}(X,n)\rightarrow H^i_{\mathrm{abs}-\Hdg}(X,n)\]
through the identification \eqref{synabs} and the canonical projection $H_{\syn}^i(\mathcal{X},n)\rightarrow\widetilde{H}_{\syn}^i(\mathcal{X},n)$ given by Proposition \ref{syncomparison}.

\subsection{Outline of this paper}
In section 2, we recall about admissible filtered $(\phi,N)$-modules and define $p$-adic Hodge complexes. The extension group of an admissible filtered $(\phi,N)$-module by $K_0$ is computed in \cite{FP} as the cohomology group of the certain simple complex. We generalize this calculation to extension groups of a complex of admissible filtered $(\phi,N)$-modules. And then we apply this argument to the calculation of extension groups of $p$-adic Hodge complexes. Using those calculations, we will show that $\Theta:D^b(\MF)\rightarrow\widetilde{pHD}$ is equivalent (Theorem \ref{cateqthm}). These argument follows that of Bannai \cite{Bannai} with the techniques of gluing categories in \cite{Beilinson0} and \cite{Huber}. 

In section 3, we construct the $p$-adic Hodge complex associated to $\mathcal{X}$. The constructions of log rigid complexes and comparison maps of them are almost due to Gro\ss e-Kl\"{o}nne in \cite{Grosse1}. He constructed the log rigid complexes of two types. One is defined by choosing admissible liftings and using Steenbrink double complexes, then it has the Frobenius operator and the monodromy operator. Another one is defined more simply, and related with the de Rham complex directly. He also constructed the comparison maps between them. We will show the functoriality of the log rigid complexes with the operators, which was only unproven.

In section 4, we define the log rigid syntomic cohomology, state some properties concluded immediately. In particular, we construct the Chern class map following the method of \cite{Besser} and \cite{Huber}.

\subsection{Acknowledgments}
The author would like to thank Professor K. Bannai for helpful suggestions and giving me opportunities of positive discussion.
The author also thank members of the KiPAS-AGNT group, S. Ohgaki, and S. Ohkawa for helpful discussions, and all members of the Department of Mathematics of Keio University for their hospitality.
It is also a pleasure to thank Professor A. Shiho for answering kindly to my questions about his work. 
The author was partially supported by JSPS Core-to-core program ``Foundation of a Global Research Cooperative Center in Mathematics focused on Number Theory and Geometry", and KAKENHI 26247004.
　
\subsection{Notation}
\begin{itemize}
\item Let $K$ be a $p$-adic field with the ring of integers $V$ and residue field $k$. Fix a prime element $\pi$ of $V$. Let $W$ be the ring of Witt vectors of $k$, $K_0$ be the fraction field of $W$, and $\sigma$ be the Frobenius automorphism on $K_0$. Let $v_p$ be the additive valuation on $K_0$ normalized by $v_p(p)=1$.
\item For a field $F$, we denote the category of finite dimensional $F$-vector spaces by $\Vec_F$.
\item For an abelian category $\mathcal{A}$, we denote the category of bounded complexes of objects in $\mathcal{A}$ by $C^b(\mathcal{A})$, and its derived category by $D^b(\mathcal{A})$.
\item Assume $\mathcal{C}$ is an additive category with internal $\mathrm{Hom}$. For complexes $L^{\bullet}$ and $M^{\bullet}$ of objects in $\mathcal{C}$, we define a complex $\cHom(L^{\bullet},M^{\bullet})$ by
\[\iHom^i(L^{\bullet},M^{\bullet})=\prod_{j\in\cZ}\iHom(L^j,M^{i+j})\]
with differential given by
\[d^i(f)=(f_{j+1}\circ d_L^j+(-1)^{i+1}d_M^{i+j}\circ f_j)_j\]
for any $f=(f_j)_j\in\prod_{j\in\cZ}\iHom(L^j,M^{i+j})$.
\end{itemize}

\section{Calculation of extension groups}
In this section, we will show that the derived caterory of admissible filtered $(\phi,N)$-modules is equivalent to a certain full subcategory of the derived category of $p$-adic Hodge complexes, by calculating their extension groups.
\subsection{Admissible filtered $(\phi,N)$-modules}
We recall about admissible filtered $(\phi,N)$-modules and their tannakian category introduced by Fontaine \cite{Fontaine}.
\begin{definition}[filtered $(\phi,N)$-module]
A filtered $(\phi,N)$-module over $K$ is a finite dimensional $K_0$-vector space $M$ equipped with additional structures as follows:
\begin{enumerate}
\item A $\sigma$-semilinear isomorphism $\phi$ on $M$, which is called the Frobenius operator.
\item A $K_0$-linear endomorphism $N$ on $M$, which is called the monodromy operator.
\item A decreasing, separated, and exhaustive filtration $F^{\bullet}$ on $M_K=M\otimes_{K_0}K$ by $K$-subspaces, which is called the Hodge filtration.
\item $\phi$ and $N$ satisfy $N\phi=p\phi N$.
\end{enumerate}
A morphism of filtered $(\phi,N)$-modules is a $K_0$-linear map commuting with $\phi$ and $N$ such that the induced $K$-linear map preserve Hodge filtration.
\end{definition}

\begin{remark}
The condition $N\phi=p\phi N$ implies that $N$ is nilpotent.
\end{remark}

Let $M$ be a filtered $(\phi,N)$-module of dimension $d$. Choose a $K_0$-basis $\{e_1,\ldots,e_d\}$ of $M$. Then we can write $\phi(e_i)=\sum_{j=1}^da_{ij}e_j$. Write $A=(a_{ij})_{1\leq i,j\leq d}$. We define the Newton number of $M$ by
\[t_N(M)=v_p(\det A).\]
Given another choice of basis and $A'$, we can write $A'=\sigma(P)AP^{-1}$ with some regular matrix $P$. So $t_N(M)$ is independent of the choice of a basis.

We define the Hodge number of $M$ by
\[t_H(M)=\sum_{n\in\cZ}n\cdot\mathrm{dim}_K\mathrm{Gr}_F^nM_K\]
where $\mathrm{Gr}_F^nM_K=F^nM_K/F^{n+1}M_K$.

\begin{definition}[admissible filtered $(\phi,N)$-module]\label{defadm}
A filtered $(\phi,N)$-module $M$ is called admissible if
\begin{enumerate}
\item $t_H(M)=t_N(M)$,
\item For any filtered $(\phi,N)$-submodule $M'$ of $M$, $t_H(M')\leq t_N(M')$.
\end{enumerate}
Let $\MF$ be the category of admissible filtered $(\phi,N)$-modules over $K$.
\end{definition}

\begin{remark}
In classical terminology, the condition "admissible" in Definition \ref{defadm} is called "weakly admissible", and admissible means "coming from a semistable representation" in the sense of Remark \ref{cateqremark} below. Remark \ref{cateqremark} says that weak admissibility and admissibility are equivalent, so we abuse the terminology.
\end{remark}

$\MF$ has a tannakian structure over $\cQ_p$ as follows (cf. \cite{Fontaine2} section 4.3.4).
For filtered $(\phi,N)$-modules $L$ and $M$, we define $\phi$ and $N$ on $L\otimes_{K_0}M$, and $F^{\bullet}$ on $(L\otimes_{K_0}M)_K=L_K\otimes_KM_K$ by
\begin{eqnarray*}
\phi(\ell\otimes m)&=&\phi_L(\ell)\otimes\phi_M(m)\\
N(\ell\otimes m)&=&N_L(\ell)\otimes m+\ell\otimes N_M(m)\\
F^i(L_K\otimes_KM_K)&=&\sum_{j+j'=i}F_L^jL_K\otimes_KF_M^{j'}M_K
\end{eqnarray*}
for any $\ell\in L$ and $m\in M$. Then $L\otimes_{K_0}M$ becomes a filtered $(\phi,N)$-module. If $L$ and $M$ are admissible, then $L\otimes_{K_0}M$ is also admissible.

Next, we define $\phi$ and $N$ on the set of $K_0$-linear maps $\Hom_{K_0}(L,M)$, and $F^{\bullet}$ on $\Hom_{K_0}(L,M)=\Hom_K(L_K,M_K)$ by
\begin{eqnarray*}
\phi(f)(\ell)&=&\phi_M\circ f\circ\phi_L^{-1}(\ell)\\
N(f)(\ell)&=&N_M\circ f(\ell)-f\circ N_L(\ell)\\
F^i\Hom_K(L_K,M_K)&=&\{g:L_K\rightarrow M_K\mid g(F_L^jL_K)\subset F_M^{i+j}M_K\text{ for all }j\in\cZ\}
\end{eqnarray*}
for any $f\in\Hom_{K_0}(M,M')$ and $m\in M$. Then $\Hom_{K_0}(M,M')$ becomes a filtered $(\phi,N)$-module. We denote it by $\iHom(M,M')$. If $M$ and $M'$ are admissible, then $\iHom(M,M')$ is also admissible.

\begin{remark}\label{cateqremark}
There exists an equivalence of tannakian categories between $\MF$ and the category of semistable $\cQ_p$-representations of the absolute Galois group of $K$. It is proved by Colmez and Fontaine in \cite{CF} first.
\end{remark}

\subsection{Extension groups of admissible filtered $(\phi,N)$-modules}
For objects $L^{\bullet}$ and $M^{\bullet}$ in $C^b(\MF)$, we define the complexes $A^{\bullet}(L^{\bullet},M^{\bullet})$, $B^{\bullet}(L^{\bullet},M^{\bullet})$, and $C^{\bullet}(L^{\bullet},M^{\bullet})$, and maps $\varphi=\varphi(L^{\bullet},M^{\bullet}):A^{\bullet}(L^{\bullet},M^{\bullet})\rightarrow B^{\bullet}(L^{\bullet},M^{\bullet})$ and $\psi=\psi(L^{\bullet},M^{\bullet}):B^{\bullet}(L^{\bullet},M^{\bullet})\rightarrow C^{\bullet}(L^{\bullet},M^{\bullet})$ by
\begin{eqnarray*}
A^{\bullet}(L^{\bullet},M^{\bullet})&=&\cHom(L^{\bullet},M^{\bullet})\oplus F^0\cHom(L_K^{\bullet},M_K^{\bullet})\\
B^{\bullet}(L^{\bullet},M^{\bullet})&=&\cHom(L^{\bullet},M^{\bullet})\oplus\cHom(L^{\bullet},M^{\bullet})\oplus\cHom(L_K^{\bullet},M_K^{\bullet})\\
C^{\bullet}(L^{\bullet},M^{\bullet})&=&\cHom(L^{\bullet},M^{\bullet})\\
\varphi(x,y)&=&(N(x),x-\phi(x),y-x)\\
\psi(x,y,z)&=&x-p\phi(x)-N(y).
\end{eqnarray*}
Then we have the double complex
\begin{equation}\label{doublecomplex}
A^{\bullet}(L^{\bullet},M^{\bullet})\xrightarrow[]{\varphi}B^{\bullet}(L^{\bullet},M^{\bullet})\xrightarrow[]{\psi}C^{\bullet}(L^{\bullet},M^{\bullet})
\end{equation}
of vector spaces over $\cQ_p$ considering $A^n(L^{\bullet},M^{\bullet})$ to be the $(n,0)$-component.
Let
\[\varphi'=\varphi'(L^{\bullet},M^{\bullet}):A^{\bullet}(L^{\bullet},M^{\bullet})\rightarrow\cKer\psi\]
be a map induced by $\varphi$, and put
\begin{eqnarray*}
\Gamma^{\bullet}(L^{\bullet},M^{\bullet})&=&\Cone(A^{\bullet}(L^{\bullet},M^{\bullet})[1]\rightarrow\Cone\psi)=\Cone(\Cone\varphi\rightarrow C^{\bullet}(L^{\bullet},M^{\bullet}))\\
\widetilde{\Gamma}^{\bullet}(L^{\bullet},M^{\bullet})&=&\cCoker\varphi'\\
\hat{\Gamma}^{\bullet}(L^{\bullet},M^{\bullet})&=&\cCoker\psi=\cCoker(\Cone\varphi\rightarrow C^{\bullet}(L^{\bullet},M^{\bullet})).
\end{eqnarray*}
Note that $\Gamma^{\bullet}(L^{\bullet},M^{\bullet})[-2]$ is the total complex of \eqref{doublecomplex}. 
Now we have two distinguished triangles
\begin{equation}\label{distinguished}
\xymatrix{
\cKer\varphi'\ar[r]&\Cone\varphi'[-1]\ar[r]\ar@{=}[ld]&\widetilde{\Gamma}^{\bullet}(L^{\bullet},M^{\bullet})[-1]\\
\cKer(\Cone\varphi\rightarrow C^{\bullet}(L^{\bullet},M^{\bullet}))[-1]\ar[r]&\Gamma^{\bullet}(L^{\bullet},M^{\bullet})[-2]\ar[r]&\hat{\Gamma}^{\bullet}(L^{\bullet},M^{\bullet})[-2].
}\end{equation}

\begin{proposition}\label{limitacyclic1}
Let $L^{\bullet}$ and $M^{\bullet}$ be objects in $C^b(\MF)$. Then
\[\varinjlim_{M'^{\bullet}}H^n(\widetilde{\Gamma}^{\bullet}(L^{\bullet},M'^{\bullet}))=0\]
for every integer $n$. Here $M'^{\bullet}$ runs all quasi-isomorphisms $M^{\bullet}\rightarrow M'^{\bullet}$.
\end{proposition}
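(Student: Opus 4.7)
The plan is to show that every cohomology class in $H^n(\widetilde{\Gamma}^\bullet(L^\bullet, M^\bullet))$ can be annihilated by a suitably chosen quasi-isomorphism $M^\bullet \to M'^\bullet$ in $C^b(\MF)$; since the indexing category is filtered, this yields the vanishing of the colimit. Unwinding the definition $\widetilde{\Gamma} = \cCoker\varphi'$, such a class is represented by a triple $\xi = (x, y, z) \in B^n(L^\bullet, M^\bullet)$ with $\psi(\xi) = 0$ and satisfying the cocycle condition $d\xi = \varphi(\eta)$ for some $\eta = (a, b) \in A^{n+1}(L^\bullet, M^\bullet)$. The image of this class in $H^n(\widetilde{\Gamma}^\bullet(L^\bullet, M'^\bullet))$ vanishes precisely when, inside $B^n(L^\bullet, M'^\bullet)$, one can write $\xi = d\xi' + \varphi(\eta')$ for some $\xi' \in (\cKer\psi)^{n-1}(L^\bullet, M'^\bullet)$ and $\eta' \in A^n(L^\bullet, M'^\bullet)$.

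The strategy is to take $M'^\bullet = M^\bullet \oplus S^\bullet$, where $S^\bullet \in C^b(\MF)$ is an acyclic complex tailored to the data $(\xi, \eta)$; the inclusion as the first summand is then a quasi-isomorphism. Since $L^\bullet$ is bounded, only finitely many component maps $L^j \to M^{n+j}$ (and their $K$-linear filtered analogues $L^j_K \to M^{n+j}_K$) appear in $\xi$ and $\eta$. For each relevant $j$, we adjoin to $S^\bullet$ an acyclic two-term summand of the form $L^j \xrightarrow{\mathrm{id}} L^j$ placed in suitable consecutive degrees, equipped with Frobenius, monodromy, and Hodge filtration inherited from $L^j$ (with the filtration on the target copy arranged so that the $b'$-component to be produced lies in $F^0$). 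These summands belong to $C^b(\MF)$ since each $L^j$ is admissible and $\MF$ is closed under direct sums. In the enlarged target $M'^\bullet$, the components of $\xi$ can then be written as $d\xi' + \varphi(\eta')$ by routing them through the new acyclic summands.

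The main obstacle is respecting the coupling among the three components of $\xi$ imposed by the constraint $\xi \in \cKer\psi$, namely $x - p\phi(x) = N(y)$, the filtered constraint $b' \in F^0$, and the structural identity $N\phi = p\phi N$ on $\MF$. These constraints translate into a finite linear compatibility system for the components of $\xi'$ and $\eta'$ in $S^\bullet$; its consistency is guaranteed by the cocycle relation $d\xi = \varphi(\eta)$, together with the identities $\psi\circ\varphi = 0$ and $N\phi = p\phi N$. Once $S^\bullet$ is chosen with enough summands of the type above to solve the system, the image of the class of $\xi$ in $H^n(\widetilde{\Gamma}^\bullet(L^\bullet, M'^\bullet))$ vanishes, and hence so does the filtered colimit. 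A minor technical point to verify is that the prescribed Hodge filtration on the adjoined summands is compatible with the admissibility condition of Definition \ref{defadm}, which reduces to a check on the summand-wise Newton and Hodge numbers.
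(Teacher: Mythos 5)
There is a genuine gap, and it is fatal to the core construction. You propose to take $M'^{\bullet}=M^{\bullet}\oplus S^{\bullet}$ with $S^{\bullet}$ an acyclic object of $C^b(\MF)$ carrying the standard (untwisted) structures inherited from the $L^j$'s. But the functor $M^{\bullet}\mapsto\widetilde{\Gamma}^{\bullet}(L^{\bullet},M^{\bullet})$ is additive in the second variable: $\cHom(L^{\bullet},-)$, $F^0\cHom(L_K^{\bullet},-)$, $\cKer\psi$ and $\cCoker\varphi'$ all commute with finite direct sums in $C^b(\MF)$, so $\widetilde{\Gamma}^{\bullet}(L^{\bullet},M^{\bullet}\oplus S^{\bullet})\cong\widetilde{\Gamma}^{\bullet}(L^{\bullet},M^{\bullet})\oplus\widetilde{\Gamma}^{\bullet}(L^{\bullet},S^{\bullet})$. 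The projection onto the first summand is a retraction of your inclusion $f$, hence $H^n(\widetilde{\Gamma}^{\bullet}(L^{\bullet},M^{\bullet}))\rightarrow H^n(\widetilde{\Gamma}^{\bullet}(L^{\bullet},M'^{\bullet}))$ is split injective, and a nonzero class can never die under such an $f$. Concretely, "routing the components of $\xi$ through the new summands" cannot work: in a direct sum the $M^{\bullet}$-component of $d\xi'+\varphi(\eta')$ only involves the $M^{\bullet}$-components of $\xi'$ and $\eta'$, so you would need $x$ (and $y$, $z$) to be a coboundary already in $\widetilde{\Gamma}^{\bullet}(L^{\bullet},M^{\bullet})$, which is circular. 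Since $H^n(\widetilde{\Gamma}^{\bullet}(L^{\bullet},M^{\bullet}))$ is not zero in general (otherwise the passage to the colimit in the statement would be pointless), the proposed mechanism cannot prove the proposition.

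What is actually needed, and what the paper does, is a nonsplit, twisted enlargement: the underlying graded space of $M'^{\bullet}$ is $M^i\oplus L^{i+1}\oplus L^i\oplus L^{i+1}\oplus L^i\oplus L^i\oplus L^{i-1}$, but the differential, the monodromy $N$, the Frobenius $\phi$, and the Hodge filtration are all deformed by the representing data $(x,y,z)$ and $(s,t)$ (for instance the differential contains cross terms such as $x\ell_1-dx\ell_2+xd\ell_2$, the Frobenius contains $x\phi\ell_2/p$, and the filtration is the graph-type filtration through $m+x\ell_2+y_K\ell_4+z\ell_5-t\ell_6$, with a shift $F^{k+1}$ and a Tate twist $\widetilde{L}^{\bullet}(1)$ on some summands to keep $N\phi=p\phi N$ and to absorb the $p\phi$ in $\psi$). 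Admissibility of $M'^{\bullet}$ is then not a summand-wise Newton/Hodge count but follows from the fact that $M'^{\bullet}$ sits in short exact sequences with admissible acyclic pieces and that admissibility is closed under extension; the quotient being acyclic gives that $f$ is a quasi-isomorphism, and explicit elements $(a,b,c)$ and $(\lambda,\mu)$ (e.g. $\mu(\ell')=(-z\ell',0,0,0,0,-\ell',0)$) exhibit $f(\zeta)$ as a coboundary. Your setup of the cocycle/coboundary conditions for $\cCoker\varphi'$ is correct, but without this twisting the argument does not go through.
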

\begin{proof}
Since $H^n(\widetilde{\Gamma}^{\bullet}(L^{\bullet},M^{\bullet}))=H^0(\widetilde{\Gamma}^{\bullet}(L^{\bullet},M^{\bullet}[-n]))$, we can put $n=0$. It suffices to prove that for any 0-cocycle $\zeta$ of $\widetilde{\Gamma}^{\bullet}(L^{\bullet},M^{\bullet})$ there exists a quasi-isomorphism $f:M^{\bullet}\rightarrow M'^{\bullet}$ such that $f(\zeta)$ is a coboundary of $\widetilde{\Gamma}^{\bullet}(L^{\bullet},M'^{\bullet})$. We define an object $M'^{\bullet}$ of $C^b(\MF)$ explicitly for $\zeta$ as follows. We simply write $d,N,\phi$ the operators on filtered $(\phi,N)$-modules (omit the subscripts). Take $(x,y,z)\in\Ker^0\psi$ representing $\zeta$. Then
\[x-p\phi x\phi^{-1}-Ny+yN=0\]
and there exists $(s,t)\in A^1(L^{\bullet},M^{\bullet})$ such that
\begin{eqnarray*}
xd-dx&=&Ns-sN\\
yd-dy&=&s-\phi y\phi^{-1}\\
zd-dz&=&s_K-t.
\end{eqnarray*}
We put 
\[M'^i=M^i\oplus L^{i+1}\oplus L^i\oplus L^{i+1}\oplus L^i\oplus L^i\oplus L^{i-1}\]
as a vector space over $K_0$ for every integer $i$. Define the differential, monodromy, and Frobenius operators on $M'^{\bullet}$ by sending  $\eta=(m,\ell_1,\ell_2,\ell_3,\ell_4,\ell_5,\ell_6)\in M'^i$ to
\begin{eqnarray*}
d(\eta)&=&(dm+x\ell_1-dx\ell_2+xd\ell_2+y\ell_3-dy\ell_4+yd\ell_4+s\ell_5+ds\ell_6+sd\ell_6,\\
&&-d\ell_1,\ell_1+d\ell_2,-d\ell_3,\ell_3+d\ell_4,d\ell_5,-\ell_5-d\ell_6)\\
N(\eta)&=&(Nm-Nx\ell_2+xN\ell_2-Ny\ell_4+yN\ell_4,N\ell_1,N\ell_2+\ell_5,N\ell_3,N\ell_4,N\ell_5,N\ell_6)\\
\phi(\eta)&=&(\phi m-\phi x\ell_2+\frac{x\phi\ell_2}{p}-\phi y\ell_4+y\phi\ell_4,\frac{\phi\ell_1}p,\frac{\phi\ell_2}p,\phi\ell_3,\phi\ell_4-\phi\ell_5,\phi\ell_5,\phi\ell_6)
\end{eqnarray*}
respectively. We define the Hodge filtration on $M'^i_K$ by defining $F^kM'^i_K$ to be the set consists of all elements $(m+x\ell_2+y_K\ell_4+z\ell_5-t\ell_6,\ell_1,\ell_2,\ell_3,\ell_4,\ell_5,\ell_6)$ with $m,\ell_1,\ell_2,\ell_3,\ell_4,\ell_5$, and $\ell_6$ are elements of $F^kM_K^i,F^{k+1}L_K^{i+1},F^{k+1}L_K^i,F^kL_K^{i+1},F^kL_K^i,F^kL_K^i$, and $F^kL_K^{i-1}$ respectively.
Then we can verify that $M'^{\bullet}$ is a complex of filterd $(\phi,N)$-modules over $K$ by straightforward calculations. The map $f:M^{\bullet}\rightarrow M'^{\bullet}$ is defined by the canonical inclusion.
Let $L'^{\bullet}$ be the cokernel of $f$ (as a complex of vector spaces over $K_0$) with induced operators. Let $\widetilde{L}^{\bullet}=\Cone(L^{\bullet}\xrightarrow[]{\mathrm{id}}L^{\bullet})$.
Then we have two short exact sequences of complexes of filtered $(\phi,N)$-modules
\begin{eqnarray*}
&&0\rightarrow M^{\bullet}\rightarrow M'^{\bullet}\rightarrow L'^{\bullet}\rightarrow 0\\
&&0\rightarrow\widetilde{L}^{\bullet}(1)\oplus\widetilde{L}^{\bullet}\rightarrow L'^{\bullet}\rightarrow\widetilde{L}^{\bullet}[-1]\rightarrow 0.
\end{eqnarray*}
Since $M^{\bullet}$ and $\widetilde{L}^{\bullet}$ are admissible and admissibility is closed under extension, $M'^{\bullet}$ is also admissible. Since $\widetilde{L}^{\bullet}$ is acyclic, so is $L'^{\bullet}$, and $f$ is a quasi-isomorphism. To prove $f(\zeta)$ is a coboundary, we define elements $(a,b,c)\in\Ker^{-1}\psi(L^{\bullet},M'^{\bullet})$ and $(\lambda,\mu)\in A^0(L^{\bullet},M'^{\bullet})$ by
\begin{eqnarray*}
&&a(\ell)=(0,\ell,0,0,0,0,0),~b(\ell)=(0,0,0,\ell,0,0,0),~c(\ell')=(0,0,0,0,0,0,0)\\
&&\lambda(\ell)=(0,0,0,0,0,-\ell,0),~\mu(\ell')=(-z\ell',0,0,0,0,-\ell',0)
\end{eqnarray*}
for every $\ell\in L^i$ and $\ell'\in L^i_K$. Then
\begin{eqnarray*}
fx&=&da+ad+N\lambda-\lambda N\\
fy&=&db+bd+\lambda-\phi\lambda\phi^{-1}\\
fz&=&dc+cd+\lambda_K-\mu
\end{eqnarray*}
so we complete the proof.
\end{proof}

\begin{proposition}\label{limitacyclic2}
Let $L^{\bullet}$ and $M^{\bullet}$ be objects in $C^b(\MF)$. Then
\[\varinjlim_{M'^{\bullet}}H^n(\hat{\Gamma}^{\bullet}(L^{\bullet},M'^{\bullet}))=0\]
for every integer $n$. Here $M'^{\bullet}$ runs all quasi-isomorphisms $M^{\bullet}\rightarrow M'^{\bullet}$.
\end{proposition}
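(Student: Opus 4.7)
The plan is to mimic the proof of Proposition \ref{limitacyclic1} almost verbatim. Given a 0-cocycle $\zeta$ of $\hat{\Gamma}^{\bullet}(L^{\bullet},M^{\bullet})$, I will explicitly construct an admissible quasi-isomorphism $f:M^{\bullet}\to M'^{\bullet}$ in $C^b(\MF)$ such that $f_*\zeta$ is a coboundary in $\hat{\Gamma}^{\bullet}(L^{\bullet},M'^{\bullet})$. As in the previous proof, the identity $H^n(\hat{\Gamma}^{\bullet}(L^{\bullet},M^{\bullet}))=H^0(\hat{\Gamma}^{\bullet}(L^{\bullet},M^{\bullet}[-n]))$ reduces us to $n=0$.

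Unpacking: since $\hat{\Gamma}^{\bullet}=\cCoker\psi$, a 0-cocycle is represented by $w\in\cHom^0(L^{\bullet},M^{\bullet})$, and the cocycle condition produces $u,v\in\cHom^1(L^{\bullet},M^{\bullet})$ with
\[wd-dw = u - p\phi u\phi^{-1} - Nv + vN.\]
The third component $z$ of any preimage in $B^1$ drops out, since $\psi$ does not depend on it, so we may take $z=0$. The triple $(w,u,v)$ plays here the structural role that $(x,y,s)$ played in Proposition \ref{limitacyclic1}, but with strictly less data than $(x,y,z,s,t)$ and correspondingly fewer cocycle constraints (only one relation instead of four).

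Construction of $M'^{\bullet}$: for each piece of cocycle data, adjoin a pair of "primitive/derivative" copies of $L^{\bullet}$ of appropriate shift, as in the previous proof. Concretely, one may take a decomposition of the form $M'^i = M^i\oplus L^{i+1}\oplus L^i\oplus L^i\oplus L^{i-1}\oplus L^i\oplus L^{i-1}$, in which the first pair corresponds to the $\cHom^0$-datum $w$ and the remaining two pairs correspond to the $\cHom^1$-data $u$ and $v$ (compare the $\ell_5,\ell_6$ pair for $s$ in the previous proof, which is now duplicated). The differential, Frobenius, monodromy, and Hodge filtration are defined by formulas modelled on the previous proof, with $w$, $u$, $v$ (and their commutators with $d$, $\phi$, $N$) appearing as coefficients; the identity above is exactly what is needed to ensure $d^2=0$, $N\phi=p\phi N$, and preservation of the filtration by $d$. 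Since $z=0$, no filtration twist by a "$z$-term" is required. The cokernel $L'^{\bullet}=M'^{\bullet}/f(M^{\bullet})$ is a direct sum of shifts/twists of the acyclic complex $\widetilde{L}^{\bullet}=\Cone(L^{\bullet}\xrightarrow{\mathrm{id}}L^{\bullet})$, so $f$ is a quasi-isomorphism, and admissibility of $M'^{\bullet}$ follows from admissibility of $M^{\bullet}$ and of $\widetilde{L}^{\bullet}$ via closure of admissibility under extensions. The relation $f_*\zeta=0$ is then witnessed by explicit $u_0,v_0\in\cHom^0(L^{\bullet},M'^{\bullet})$ and $c\in\cHom^{-1}(L^{\bullet},M'^{\bullet})$ defined by placing basis vectors of $L^j$ in the newly adjoined summands, verifying $f\circ w = \psi(u_0,v_0,0)+dc-cd$ by direct substitution.

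The main obstacle — entirely parallel to that of Proposition \ref{limitacyclic1} — is the meticulous bookkeeping of signs, $p$-weightings, and filtration twists in the formulas for $d$, $\phi$, $N$, and $F^{\bullet}$ on the new summands. The factor of $p$ in $\psi(x,y,z)=x-p\phi(x)-N(y)$ forces specific scalings of $\phi$ on certain new $L$-components (compare $\phi\ell_1/p$ and $\phi\ell_2/p$ in the previous proof), and these scalings must cohere with $N\phi=p\phi N$ and with filtration preservation. Once the formulas are fixed, the verification is a direct calculation strictly simpler than, and in the same spirit as, the one already carried out in Proposition \ref{limitacyclic1}.
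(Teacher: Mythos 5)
Your reduction to $n=0$ and the description of a $0$-cocycle of $\hat{\Gamma}^{\bullet}$ by $w\in\iHom^0(L^{\bullet},M^{\bullet})$ together with $u,v\in\iHom^1(L^{\bullet},M^{\bullet})$ satisfying $wd-dw=u-p\phi u\phi^{-1}-Nv+vN$ are fine, but the core of your plan --- adjoin shifted copies of $L^{\bullet}$ with $d,N,\phi$ twisted by $(w,u,v)$, ``as in Proposition \ref{limitacyclic1}'' --- has a genuine gap, and it is not the route the paper takes. To make $f(w)$ land in $\mathrm{im}\,\xi_{M'^{\bullet}}$ (up to a coboundary), where $\xi(x,y)=x-p\phi(x)-N(y)$, the tautological maps into the new $L$-summands must be carried onto $w$ by the operators $1-p\phi$ or $N$ of $M'^{\bullet}$; hence it is $N_{M'}$ or $\phi_{M'}$ (not merely $d_{M'}$, as in Proposition \ref{limitacyclic1}) that must be twisted by $w$-data. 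The compatibility $N_{M'}\phi_{M'}=p\phi_{M'}N_{M'}$ then imposes conditions that do not follow from your single relation: twisting $N$ by $w$ forces an eigenvector-type condition $p\phi(w)=\kappa w$ against the Frobenius scaling $\kappa$ of the new summand, which fails for general $w$ and cannot always be repaired by changing $w$ modulo $\mathrm{im}\,\xi_{M}$ (already for $L=K_0$ and $M$ two-dimensional in degree $0$ with $p\phi_M$ a nontrivial unipotent Jordan block and $N_M=0$); twisting $\phi$ by $w$ instead produces an auxiliary equation of the shape $h-p^{2}\phi(h)=N(w)$ in $\iHom^0(L^{\bullet},M^{\bullet})$, i.e.\ a surjectivity problem of exactly the kind you are trying to solve. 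One can try to push such discrepancies into further summands, but that iteration only closes up because $N$ is nilpotent --- an ingredient your proposal never invokes. So the assertions that the displayed relation ``is exactly what is needed'' for $d^2=0$, $N\phi=p\phi N$ and filtration compatibility, and that the verification is ``strictly simpler'' than before, are unfounded as stated.

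The paper's proof uses a different, element-free mechanism. It sets $\widetilde{M}^{\bullet}=\Cone(\mathrm{id}_{M^{\bullet}})[-1]$ and $M'^{\bullet}=M^{\bullet}\oplus\widetilde{M}^{\bullet}(1)\oplus\cdots\oplus\widetilde{M}^{\bullet}(r)$ with $r=2r_0$ and $N^{r_0}=0$ on $M^{\bullet}$, where the new monodromy is the old one minus a shift sending the $k$-th copy into the $(k-1)$-st, and the Frobenius is rescaled by powers of $p$ so that $N\phi=p\phi N$ holds; admissibility follows from extension-closedness as you say. No copies of $L^{\bullet}$, no cocycle condition, and no coboundary term are needed: for an arbitrary $x\in\iHom^0(L^{\bullet},M^{\bullet})$ one places the iterated commutators $\sum_{k}(-1)^k\binom{j}{k}N^{j-k}xN^{k}$ in the $j$-th new slot to obtain $a$ with $N(a)=-f(x)$, hence $f(x)=\xi_{M'^{\bullet}}(0,a)$; the telescoping terminates precisely because $N$ is nilpotent. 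This ``make $N$ internally surjective after enlarging $M$'' trick, with nilpotency of $N$ as the terminating ingredient, is the key idea missing from your proposal.
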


\begin{proof}
We fix an $L^{\bullet}$ in $C^b(\MF)$.
Then $\hat{\Gamma}(L^{\bullet},M^{\bullet})=\Coker^{\bullet}\xi_{M^{\bullet}}$ where
\[\xi=\xi_{M^{\bullet}}:\cHom(L^{\bullet},M^{\bullet})\oplus\cHom(L^{\bullet},M^{\bullet})\rightarrow\cHom(L^{\bullet},M^{\bullet}):(x,y)\mapsto x-p\phi(x)-N(y),\]
By shifting, we may assume $n=0$. It suffices to show there exists a quasi-isomorphism $f:M^{\bullet}\rightarrow M'^{\bullet}$ such that $f(x)$ lies in the image of $\xi_{M'^{\bullet}}$ for any $x\in\iHom^0(L^{\bullet},M^{\bullet})$.
We define such $M'^{\bullet}$ as follows.
Let $\widetilde{M}^{\bullet}=\Cone(M^{\bullet}\xrightarrow[]{\mathrm{id}}M^{\bullet})[-1]$. For $k\geq 1$ we define
\[M^{\bullet}_k=M^{\bullet}\oplus\widetilde{M}^{\bullet}(1)\oplus\cdots\oplus\widetilde{M}^{\bullet}(k)\]
as a complex of filtered vector spaces over $K_0$, and put $M_0^{\bullet}=M^{\bullet}$. We define the $i$-th monodromy operator and Frobenius operator on $M_k^{\bullet}$ by sending $\eta=(y_0,y_1,z_1,\ldots,z_{k-1},y_k,z_k)\in M'^i$ to
\begin{eqnarray*}
N(\eta)&=&(Ny_0-y_1,Ny_1-y_2,Nz_1-z_2,\ldots,Nz_{k-1}-z_k,Ny_k,Nz_k)\\
\phi(\eta)&=&(\phi y_0-p\phi y_1,p\phi y_1-p^2\phi y_2,p\phi z_1-p^2\phi z_2,\ldots,p^{k-1}\phi z_{k-1}-p^k\phi z_k,p^k\phi y_k,p^k\phi z_k)
\end{eqnarray*}
respectively. Then we can verify that $M_k^{\bullet}$ is a complex of filtered Frobenius monodromy modules over $K$ by straightforward calculations. Since
\[0\rightarrow M_{k-1}^{\bullet}\rightarrow M_k^{\bullet}\rightarrow\widetilde{M}^{\bullet}(k)\rightarrow 0\]
is exact and $\widetilde{M}^{\bullet}(k)$ is admissible, each $M_k^{\bullet}$ is also admissible. 
Let $r=2r_0$ be an integer such that $N^{r_0}$ on $M^{\bullet}$ vanish, and put $M'^{\bullet}=M_r^{\bullet}$. Let $f:M^{\bullet}\rightarrow M'^{\bullet}$ be the natural inclusion.
For $x\in\iHom^0(L^{\bullet},M^{\bullet})$, we define $a\in\iHom^0(L^{\bullet},M'^{\bullet})$ by
\begin{eqnarray*}&&a(\ell)=(0,x\ell,0,Nx\ell-xN\ell,0,N^2x\ell-2NxN\ell+xN^2\ell,0,\\
&&\hspace{90pt}\ldots,\sum_{0\leq k\leq r-1}(-1)^k\left(\begin{array}{c}r-1\\ k\end{array}\right)N^{r-1-k}xN^k\ell,0).\end{eqnarray*}
Then we have
\[Na(\ell)-aN(\ell)=(-x\ell,0,0,\ldots,0)\]
and
\[f(x)=\xi_{M'^{\bullet}}(0,a).\]
\end{proof}

\begin{lemma}\label{homotopycathom}
Let $L^{\bullet}$ and $M^{\bullet}$ be two objects in $C^b(\MF)$. We denote $\varphi=\varphi(L^{\bullet},M^{\bullet})$. Then we have the canonical isomorphism
\[H^n(\cKer\varphi)\cong\Hom_{K^b(\MF)}(L^{\bullet},M^{\bullet}[n]).\]
\end{lemma}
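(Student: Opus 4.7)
The plan is to unwind the definitions carefully and match $\cKer\varphi$ against chain maps of complexes in $\MF$, modulo null-homotopy.

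First I would decompose an element $(x,y) \in A^n(L^\bullet, M^\bullet)$ as families $x = (x_j)_j$ with $x_j \in \Hom_{K_0}(L^j, M^{n+j})$ and $y = (y_j)_j$ with $y_j \in F^0 \Hom_K(L_K^j, M_K^{n+j})$, and translate the condition $\varphi(x,y) = (N(x),\, x - \phi(x),\, y - x_K) = 0$ componentwise, using the internal-hom formulas for $\phi$, $N$, and $F^\bullet$ on filtered $(\phi,N)$-modules recalled in the previous subsection. The three resulting conditions, for each $j$, are $N \circ x_j = x_j \circ N$, $\phi \circ x_j = x_j \circ \phi$, and $(x_j)_K$ preserves the Hodge filtration (the last coming from $y_j = (x_j)_K$ together with $y_j \in F^0$). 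These are exactly the defining conditions of a morphism in $\MF$, so
\[\cKer^n \varphi \;\cong\; \prod_j \Hom_{\MF}(L^j, M^{n+j}).\]

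Next I would observe that the differential on $\cKer\varphi$ is inherited from $A^\bullet$, hence from $\cHom$, and by the formula in the notation section it acts on a family $(x_j)_j$ by $d(x)_j = x_{j+1} \circ d_L^j + (-1)^{n+1} d_M^{n+j} \circ x_j$. An $n$-cocycle of $\cKer\varphi$ is therefore a family of $\MF$-morphisms satisfying $x_{j+1} d_L^j = (-1)^n d_M^{n+j} x_j$, i.e.\ a chain map $L^\bullet \to M^\bullet[n]$ in $C^b(\MF)$. Likewise a coboundary $d(h)$ with $h \in \cKer^{n-1}\varphi$ is exactly the null-homotopy relation with homotopy through $\MF$-morphisms. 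Passing to the quotient yields
\[H^n(\cKer\varphi) \;\cong\; \Hom_{K^b(\MF)}(L^\bullet, M^\bullet[n]).\]

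There is no real obstacle here; the argument is an unwinding of definitions. The only care required is, first, to confirm that the internal-hom formulas for $\phi$, $N$, and the filtration truly encode the three morphism conditions of $\MF$ componentwise, and second, to match the paper's sign convention for the hom complex with the translation functor so that cocycles are precisely chain maps $L^\bullet \to M^\bullet[n]$.
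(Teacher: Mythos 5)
Your proposal is correct and follows essentially the same route as the paper's proof: both unwind the definitions, identifying $\cKer\varphi$ componentwise with families of $\MF$-morphisms (the kernel conditions encoding compatibility with $\phi$, $N$, and the Hodge filtration), cocycles with chain maps $L^{\bullet}\rightarrow M^{\bullet}[n]$, and coboundaries with maps null-homotopic through $\MF$-morphisms. The extra care you take with the sign conventions for $\cHom$ and the shift is a fine addition but does not change the argument.
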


\begin{proof}
Consider the condition for that an element $\zeta=(x_j,y_j)_j\in A^n(L^{\bullet},M^{\bullet})$ defines a map $L^{\bullet}\rightarrow M^{\bullet}[n]$ in $C^b(\MF)$. Then we notice that preservability of Hodge filtration and compatibility of monodromy operator and Frobenius operator are equivalent to the condition $\zeta$ is in the kernel of $\varphi$. Compatibility of differential means $\zeta$ is an $n$-cocycle of $\cKer\varphi$. So the group of $n$-cocycles of $\cKer\varphi$ is isomorphic to $\Hom_{C^b(\MF)}(L^{\bullet},M^{\bullet}[n])$. Finally, $\zeta$ is a coboundary if and only if the map $L^{\bullet}\rightarrow M^{\bullet}[n]$ corresponding to $\zeta$ is homotopic to zero map.
\end{proof}

\begin{lemma}\label{coneqis}
Let $L^{\bullet}$ be an object in $C^b(\MF)$. For every quasi-isomorphism $M^{\bullet}\rightarrow M'^{\bullet}$ in $C^b(\MF)$, the induced map $\Gamma^{\bullet}(L^{\bullet},M^{\bullet})\rightarrow\Gamma^{\bullet}(L^{\bullet},M'^{\bullet})$ in $C^b(\Vec_{\cQ_p})$ is also quasi-isomorphic.
\end{lemma}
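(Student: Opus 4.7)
My plan is to reduce the statement to showing that $\Gamma^{\bullet}(L^{\bullet},N^{\bullet})$ is acyclic for every acyclic $N^{\bullet}\in C^b(\MF)$, and then to decompose $\Gamma^{\bullet}$ and verify acyclicity of each piece. The functors $A^{\bullet}(L^{\bullet},-)$, $B^{\bullet}(L^{\bullet},-)$, $C^{\bullet}(L^{\bullet},-)$ are additive in the second argument, being built from the additive operations $\cHom(L^{\bullet},-)$, $\cHom(L_K^{\bullet},(-)_K)$, and $F^0\cHom(L_K^{\bullet},(-)_K)$; the maps $\varphi,\psi$ are natural transformations, so $\Gamma^{\bullet}(L^{\bullet},-)$ commutes with mapping cones. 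Applied to $N^{\bullet}=\Cone(f)$ for the given quasi-isomorphism $f\colon M^{\bullet}\to M'^{\bullet}$, this reduces the lemma to the acyclicity claim. Moreover $\Gamma^{\bullet}(L^{\bullet},N^{\bullet})[-2]$ is by construction the total complex of the bounded three-column double complex $A^{\bullet}\xrightarrow{\varphi}B^{\bullet}\xrightarrow{\psi}C^{\bullet}$, so the standard spectral sequence for a bounded double complex further reduces us to showing that each of $A^{\bullet}$, $B^{\bullet}$, $C^{\bullet}$ is acyclic as a complex of $\cQ_p$-vector spaces.

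For the unfiltered summands $\cHom(L^{\bullet},N^{\bullet})$ and $\cHom(L_K^{\bullet},N_K^{\bullet})$ appearing in $A^{\bullet}$, $B^{\bullet}$, $C^{\bullet}$, I would argue as follows. Since $\MF$ is abelian with strict morphisms, $N^{\bullet}$ being acyclic in $\MF$ implies that the underlying complex of $K_0$-vector spaces is acyclic, and hence so is $N_K^{\bullet}$ after the exact base change. Each $L^j$ (resp. $L_K^j$) is a finite-dimensional vector space, so $\iHom(L^j,-)$ is exact, making $\iHom(L^j,N^{\bullet})$ acyclic; boundedness of $L^{\bullet}$ and $N^{\bullet}$ then yields acyclicity of the total complex $\cHom(L^{\bullet},N^{\bullet})$ via the same bounded double complex argument.

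The main obstacle is the filtered summand $F^0\cHom(L_K^{\bullet},N_K^{\bullet})$ of $A^{\bullet}$. My strategy is to introduce on the internal Hom the decreasing filtration
\[F^k\iHom(L_K^j,N_K^{i+j})=\{g\mid g(F^{\ell}L_K^j)\subset F^{k+\ell}N_K^{i+j}\text{ for all }\ell\in\cZ\},\]
extend it to $\cHom$ by the product over $j$, and argue by descending induction on $k$. In each degree this filtration is bounded (the $L_K^j$, $N_K^{i+j}$ are finite-dimensional with bounded Hodge filtrations and the complexes are bounded), it is stable under the differential because $d_L,d_N$ are strict, and $F^k=0$ for $k\gg 0$. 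The short exact sequence $0\to F^{k+1}\cHom\to F^k\cHom\to \mathrm{Gr}^k\cHom\to 0$ together with its long exact cohomology sequence reduces the acyclicity of $F^0\cHom$ to that of each $\mathrm{Gr}^k\cHom(L_K^{\bullet},N_K^{\bullet})$. The canonical identification $\mathrm{Gr}^k\iHom(V,W)\cong\bigoplus_{\ell}\iHom(\mathrm{Gr}^{\ell}V,\mathrm{Gr}^{k+\ell}W)$ for finite-dimensional filtered vector spaces is natural, hence upgrades to an isomorphism of complexes
\[\mathrm{Gr}^k\cHom(L_K^{\bullet},N_K^{\bullet})\cong\bigoplus_{\ell}\cHom(\mathrm{Gr}^{\ell}L_K^{\bullet},\mathrm{Gr}^{k+\ell}N_K^{\bullet}).\]
Strictness of morphisms in $\MF$ ensures that each $\mathrm{Gr}^{k+\ell}N_K^{\bullet}$ is an acyclic complex of $K$-vector spaces, so the argument of the previous paragraph gives acyclicity of each summand, completing the induction.

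The technical heart of the proof is thus the filtered step: one must verify that the filtration on the internal Hom complex is stable under the differential, correctly identify its associated graded as a direct sum of $\cHom$'s between graded pieces, and use strictness of morphisms in $\MF$ (the key abelian-category input) to transfer acyclicity of $N^{\bullet}$ to every $\mathrm{Gr}^{\ell}N_K^{\bullet}$.
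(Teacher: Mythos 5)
Your proposal is correct and, despite its length, follows essentially the same route as the paper, whose one-line proof simply asserts that internal $\Hom$ (including its $F^0$ part) and taking cones are exact, so that $\Gamma^{\bullet}(L^{\bullet},-)$ preserves quasi-isomorphisms; your reduction to an acyclic cone and column-wise acyclicity is just an unpacking of that exactness. The only added substance is that you make explicit the key hidden input --- strictness of morphisms in $\MF$ with respect to the Hodge filtration, which is what makes the filtered piece $F^0\cHom(L_K^{\bullet},-)$ exact --- a point the paper leaves implicit.
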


\begin{proof}
Since internal $\Hom$ and taking cone define exact functors, $\Gamma^{\bullet}(L^{\bullet},-)$ is also exact. So the lemma follows.
\end{proof}

\begin{theorem}\label{extmf}
For every objects $L^{\bullet}$ and $M^{\bullet}$ of $C^b(\MF)$, we have the canonical isomorphism
\[\Ext^n(L^{\bullet},M^{\bullet})=\Hom_{D^b(\MF)}(L^{\bullet},M^{\bullet}[n])\cong H^n(\Gamma^{\bullet}(L^{\bullet},M^{\bullet})[-2]).\]
\end{theorem}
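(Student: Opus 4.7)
The strategy is to express both sides of the claimed isomorphism as the same filtered colimit, indexed by quasi-isomorphisms $M^\bullet\to M'^\bullet$ in $C^b(\MF)$. On the Ext side, since quasi-isomorphisms form a multiplicative system in $K^b(\MF)$, the calculus of fractions gives
\[\Hom_{D^b(\MF)}(L^\bullet,M^\bullet[n]) = \varinjlim_{M^\bullet\to M'^\bullet}\Hom_{K^b(\MF)}(L^\bullet,M'^\bullet[n]),\]
which by Lemma~\ref{homotopycathom} equals $\varinjlim H^n(\cKer\varphi(L^\bullet,M'^\bullet))$. On the $\Gamma^\bullet$ side, Lemma~\ref{coneqis} shows that $H^n(\Gamma^\bullet(L^\bullet,-)[-2])$ is already invariant under quasi-isomorphism in the second variable, so trivially
\[H^n(\Gamma^\bullet(L^\bullet,M^\bullet)[-2]) = \varinjlim H^n(\Gamma^\bullet(L^\bullet,M'^\bullet)[-2]).\]
It therefore suffices to identify these two colimits.

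The bridge is the two distinguished triangles in \eqref{distinguished}, combined with exactness of filtered colimits of abelian groups. The key structural observation is that $\psi\circ\varphi = 0$ (a direct consequence of $N\phi = p\phi N$), so $\mathrm{im}\,\varphi\subseteq\cKer\psi$ termwise; comparing the differentials term by term then yields the canonical identifications $\cKer(\Cone\varphi\to C^\bullet)=\Cone\varphi'$ and $\cKer\varphi'=\cKer\varphi$. Taking the long exact sequence attached to the second triangle and using Proposition~\ref{limitacyclic2} to kill $\hat\Gamma^\bullet$ in the colimit gives
\[\varinjlim H^n(\Gamma^\bullet[-2])\ \cong\ \varinjlim H^n(\cKer(\Cone\varphi\to C^\bullet)[-1])\ =\ \varinjlim H^{n-1}(\Cone\varphi').\]
Taking the long exact sequence of the first triangle and using Proposition~\ref{limitacyclic1} to kill $\widetilde\Gamma^\bullet$ in the colimit gives
\[\varinjlim H^{n-1}(\Cone\varphi') = \varinjlim H^n(\Cone\varphi'[-1]) \cong \varinjlim H^n(\cKer\varphi') = \varinjlim H^n(\cKer\varphi),\]
which closes the chain of isomorphisms.

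The only non-formal step is the identification $\cKer(\Cone\varphi\to C^\bullet)\cong\Cone\varphi'$; this is what makes the seemingly ad hoc total complex $\Gamma^\bullet[-2]$ of \eqref{doublecomplex} match the conceptually clean cone $\Cone\varphi'$, and it is the ``hard part'' of the bookkeeping. Everything else is diagram chasing: the Verdier localization formula for $\Hom$ in $D^b$, exactness of filtered colimits in $\Vec_{\cQ_p}$, and the two long exact sequences attached to \eqref{distinguished}. All genuine content has already been absorbed into Propositions~\ref{limitacyclic1} and~\ref{limitacyclic2}, whose proofs enlarge $M^\bullet$ explicitly so as to annihilate a prescribed cohomology class; granted those two vanishing statements, the present theorem is a short diagram chase.
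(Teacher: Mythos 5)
Your proposal is correct and follows essentially the same route as the paper: both identify $\Hom_{D^b(\MF)}(L^{\bullet},M^{\bullet}[n])$ with $\varinjlim H^n(\cKer\varphi')$ via Lemma \ref{homotopycathom} and the localization description of $D^b$, use Lemma \ref{coneqis} for quasi-isomorphism invariance of $\Gamma^{\bullet}(L^{\bullet},-)$, and pass through the long exact sequences of the triangles \eqref{distinguished} with Propositions \ref{limitacyclic1} and \ref{limitacyclic2} killing $\widetilde{\Gamma}^{\bullet}$ and $\hat{\Gamma}^{\bullet}$ in the colimit. The identification $\cKer(\Cone\varphi\rightarrow C^{\bullet})\cong\Cone\varphi'$ that you flag as the main bookkeeping step is exactly the equality built into the diagram \eqref{distinguished} in the paper, so no new content is needed there.
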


\begin{proof}
Taking direct limit of the cohomology long exact sequence obtained from distinguished triangles \eqref{distinguished} for all quasi-isomorphisms $M^{\bullet}\rightarrow M'^{\bullet}$, by Proposition \ref{limitacyclic1} and Proposition \ref{limitacyclic2}, we have the canonical isomorphism
\begin{equation}\label{extmfeq}
\varinjlim_{M'^{\bullet}}H^n(\cKer\varphi'(L^{\bullet},M'^{\bullet}))\cong\varinjlim_{M'^{\bullet}}H^n(\Gamma^{\bullet}(L^{\bullet},M'^{\bullet})[-2]).
\end{equation}
By Lemma \ref{homotopycathom} and $\cKer\varphi=\cKer\varphi'$, the left hand side in \eqref{extmfeq} is canonically isomorphic to
\[\varinjlim_{M'^{\bullet}}\Hom_{K^b(\MF)}(L^{\bullet},M'^{\bullet}[n])=\Hom_{D^b(\MF)}(L^{\bullet},M^{\bullet}[n]).\]
By  Lemma \ref{coneqis}, the right hand side in \eqref{extmfeq} is canonically isomorphic to $H^n(\Gamma^{\bullet}(L^{\bullet},M^{\bullet})[-2])$.
\end{proof}

\subsection{$p$-adic Hodge complexes}
We define the category of $p$-adic Hodge complexes and its derived category following the argument of \cite{Bannai}.

\begin{definition}
Let $C^b_{\rig,K_0}$ be the category of triples $(M^{\bullet},\phi,N)$ where
\begin{enumerate}
\setlength{\parskip}{0pt}
\setlength{\itemsep}{0pt}
\item $M^{\bullet}$ is an object in $C^b(\Vec_{K_0})$.
\item $\phi:M^{\bullet}\rightarrow M^{\bullet}$ is a $\sigma$-semilinear endomorphism of complexes called the Frobenius operator.
\item $N:M^{\bullet}\rightarrow M^{\bullet}$ is a nilpotent endomorphism in $C^b(\Vec_{K_0})$ called the monodromy operator.
\item $\phi$ and $N$ satisfies $N\phi=p\phi N$.
\end{enumerate}
A morphism in $C^b_{\rig,K_0}$ is a morphism in $C^b(\Vec_{K_0})$ which is compatible with $\phi$ and $N$.
\end{definition}

\begin{definition}
Let $C^b_{\dR,K}$ be the category of pairs $(M^{\bullet},F)$ where
\begin{enumerate}
\setlength{\parskip}{0pt}
\setlength{\itemsep}{0pt}
\item $M^{\bullet}$ is an object in $C^b(\Vec_K)$.
\item $F$ is a separated exhaustive descending filtration on $M^{\bullet}$ called the Hodge filtration.
\end{enumerate}
A morphism in $C^b_{\dR,K}$ is a morphism in $C^b(\Vec_K)$ which preserves $F$.
\end{definition}

\begin{definition}
A $p$-adic Hodge complex is a system $M^{\bullet}=(M_{\rig}^{\bullet},\phi,N,M_K^{\bullet},M_{\dR}^{\bullet},F,\alpha,\beta)$ where
\begin{enumerate}
\setlength{\parskip}{0pt}
\setlength{\itemsep}{0pt}
\item $(M_{\rig}^{\bullet},\phi,N)$ is an object in $C^b_{\rig,K_0}$.
\item $M_K^{\bullet}$ is an object in $C^b(\Vec_K)$.
\item $(M_{\dR},F)$ is an object in $C^b_{\dR,K}$.
\item $\alpha:M_{\rig}^{\bullet}\otimes_{K_0}K\rightarrow M_K^{\bullet}$ and $\beta:M_{\dR}^{\bullet}\rightarrow M_K^{\bullet}$ are morphisms in $C^b(\Vec_K)$. We call them comparision maps.
\end{enumerate}
We call $M_{\rig}^{\bullet}$ (resp. $M^{\bullet}_K$, $M^{\bullet}_{\dR}$) the rigid (resp. $K$-, de Rham) specialization of $M^{\bullet}$.
A morphism of $p$-adic Hodge complexes is a triple of morphisms on specializations which are compatible with comparison maps. We denote $\pHC$ the category of $p$-adic Hodge complexes.
\end{definition}

A morphism in $\pHC$ is homotopic to zero if it has a homotopy on every specialization. We define the homotopy category $\pHK$ of $p$-adic Hodge complexes to be $\pHC$ modulo morphisms homotopic to zero. A $p$-adic Hodge complex is acyclic if every specialization is acyclic. Then the localization of $\pHK$ by acyclic objects exists, and has the natural structure of triangulated category. We denote it by $\pHD$.

\subsection{Extension groups of $p$-adic Hodge complexes}
Note that $C^b_{\rig,K_0}$ and $C^b_{\dR,K}$ are additive categories with internal $\mathrm{Hom}$. Let $L^{\bullet}$ and $M^{\bullet}$ be objects in $\pHC$. We assume that the Frobenius operator $\phi$ on $L^{\bullet}$ is an automorphism. Then we define the complexes $\mathcal{A}^{\bullet}(L^{\bullet},M^{\bullet})$, $\mathcal{B}^{\bullet}(L^{\bullet},M^{\bullet})$, and $\mathcal{C}^{\bullet}(L^{\bullet},M^{\bullet})$ and maps $\Phi=\Phi(L^{\bullet},M^{\bullet}):\mathcal{A}^{\bullet}(L^{\bullet},M^{\bullet})\rightarrow\mathcal{B}^{\bullet}(L^{\bullet},M^{\bullet})$ and $\Psi=\Psi(L^{\bullet},M^{\bullet}):\mathcal{B}^{\bullet}(L^{\bullet},M^{\bullet})\rightarrow\mathcal{C}^{\bullet}(L^{\bullet},M^{\bullet})$ by
\begin{eqnarray*}
\mathcal{A}^{\bullet}(L^{\bullet},M^{\bullet})&=&\cHom(L_{\rig}^{\bullet},M_{\rig}^{\bullet})\oplus\cHom(L_K^{\bullet},M_K^{\bullet})\oplus F^0\cHom(L_{\dR}^{\bullet},M_{\dR}^{\bullet})\\
\mathcal{B}^{\bullet}(L^{\bullet},M^{\bullet})&=&\cHom(L_{\rig}^{\bullet},M_{\rig}^{\bullet})\oplus\cHom(L_{\rig}^{\bullet},M_{\rig}^{\bullet})\oplus\cHom(L_{\rig}^{\bullet},M_K^{\bullet})\oplus\cHom(L_{\dR}^{\bullet},M_K^{\bullet})\\
\mathcal{C}^{\bullet}(L^{\bullet},M^{\bullet})&=&\cHom(L_{\rig}^{\bullet},M_{\rig}^{\bullet})\\
\Phi(x,y,z)&=&(N(x),x-\phi(x),\alpha x-y\alpha,y\beta-\beta z)\\
\Psi(x,y,z,w)&=&x-p\phi(x)-N(y).
\end{eqnarray*}

Then we have a double complex
\begin{equation}\label{doublecomplex2}
\mathcal{A}^{\bullet}(L^{\bullet},M^{\bullet})\xrightarrow[]{\Phi}\mathcal{B}^{\bullet}(L^{\bullet},M^{\bullet})\xrightarrow[]{\Psi}\mathcal{C}^{\bullet}(L^{\bullet},M^{\bullet})
\end{equation}
of vector spaces over $\cQ_p$ considering $\mathcal{A}^n(L^{\bullet},M^{\bullet})$ to be the $(n,0)$-component.
Let
\[\Phi'=\Phi'(L^{\bullet},M^{\bullet}):\mathcal{A}^{\bullet}(L^{\bullet},M^{\bullet})\rightarrow\cKer\Psi\]
be a map induced by $\Phi$, and put
\begin{eqnarray*}
\Lambda^{\bullet}(L^{\bullet},M^{\bullet})&=&\Cone(\mathcal{A}^{\bullet}(L^{\bullet},M^{\bullet})[1]\rightarrow\Cone\Psi)=\Cone(\Cone\Phi\rightarrow\mathcal{C}^{\bullet}(L^{\bullet},M^{\bullet}))\\
\widetilde{\Lambda}^{\bullet}(L^{\bullet},M^{\bullet})&=&\cCoker\Phi'\\
\hat{\Lambda}^{\bullet}(L^{\bullet},M^{\bullet})&=&\cCoker\Psi=\cCoker(\Cone\Phi\rightarrow\mathcal{C}^{\bullet}(L^{\bullet},M^{\bullet})).
\end{eqnarray*}
Note that $\Lambda^{\bullet}(L^{\bullet},M^{\bullet})$ is the total complex of \eqref{doublecomplex2}. 
Now we have two distinguished triangles
\begin{equation}\label{distinguished2}
\xymatrix{
\cKer\Phi'\ar[r]&\Cone\Phi'[-1]\ar[r]\ar@{=}[ld]&\widetilde{\Lambda}^{\bullet}(L^{\bullet},M^{\bullet})[-1]\\
\cKer(\Cone\Phi\rightarrow\mathcal{C}^{\bullet}(L^{\bullet},M^{\bullet}))[-1]\ar[r]&\Lambda^{\bullet}(L^{\bullet},M^{\bullet})[-2]\ar[r]&\hat{\Lambda}^{\bullet}(L^{\bullet},M^{\bullet})[-2].
}\end{equation}

\begin{remark}\label{coneremark}
For an object $M^{\bullet}$ in $\pHC$, we define the complexes $\mathcal{A}_0^{\bullet}(M^{\bullet})$, $\mathcal{B}_0^{\bullet}(M^{\bullet})$, $\mathcal{C}_0^{\bullet}(M^{\bullet})$, and $\Lambda_0^{\bullet}(M^{\bullet})$ by
\begin{eqnarray*}
\mathcal{A}_0^{\bullet}(M^{\bullet})&=&M_{\rig}^{\bullet}\oplus F^0M_{\dR}^{\bullet}\\
\mathcal{B}_0^{\bullet}(M^{\bullet})&=&M_{\rig}^{\bullet}\oplus M_{\rig}^{\bullet}\oplus M_K^{\bullet}\\
\mathcal{C}_0^{\bullet}(M^{\bullet})&=&M_{\rig}^{\bullet}\\
\Lambda_0^{\bullet}(M^{\bullet})&=&\Cone(\mathcal{A}_0^{\bullet}(M^{\bullet})[1]\rightarrow\Cone\Psi_0)=\Cone(\Cone\Psi_0\rightarrow\mathcal{C}_0^{\bullet}(M^{\bullet})).
\end{eqnarray*}
where
\begin{eqnarray*}
&&\Phi_0=\Phi_0(M^{\bullet}):\mathcal{A}_0^{\bullet}(M^{\bullet})\rightarrow\mathcal{B}_0^{\bullet}(M^{\bullet}):(x,y)\mapsto(N(x),x-\phi(x),\alpha(x)-\beta(y))\\
&&\Psi_0=\Psi_0(M^{\bullet}):\mathcal{B}_0^{\bullet}(M^{\bullet})\rightarrow\mathcal{C}_0^{\bullet}(M^{\bullet}):(x,y,z)\mapsto x-p\phi(x)-N(y).
\end{eqnarray*}
Then the maps
\begin{eqnarray*}
\mathcal{A}^{\bullet}(K_0,M^{\bullet})\rightarrow\mathcal{A}_0^{\bullet}(M^{\bullet})&:&(x,y,z)\mapsto(x,z)\\
\mathcal{B}^{\bullet}(K_0,M^{\bullet})\rightarrow\mathcal{B}_0^{\bullet}(M^{\bullet})&:&(x,y,z,w)\mapsto(x,y,z+w)\\
\mathcal{C}^{\bullet}(K_0,M^{\bullet})\rightarrow\mathcal{C}_0^{\bullet}(M^{\bullet})&:&x\mapsto x
\end{eqnarray*}
induce the quasi-isomorphism
\[\Lambda^{\bullet}(K_0,M^{\bullet})\rightarrow\Lambda_0^{\bullet}(M^{\bullet}).\]
\end{remark}

\begin{proposition}\label{limitacyclic3}
Let $L^{\bullet}$ and $M^{\bullet}$ be objects in $\pHC$. Assume that comparision maps of $L^{\bullet}$ are identity maps, and $\phi$ on $L^{\bullet}_{\rig}$ is an automorphism. Then
\[\varinjlim_{M'^{\bullet}}H^n(\widetilde{\Lambda}^{\bullet}(L^{\bullet},M'^{\bullet}))=0\]
for every integer $n$. Here $M'^{\bullet}$ runs all quasi-isomorphisms $M^{\bullet}\rightarrow M'^{\bullet}$.
\end{proposition}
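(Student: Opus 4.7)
The strategy mirrors that of Proposition \ref{limitacyclic1}, generalized to the three-specialization setting of $\pHC$. First, using $H^n(\widetilde{\Lambda}^{\bullet}(L^{\bullet},M^{\bullet})) = H^0(\widetilde{\Lambda}^{\bullet}(L^{\bullet},M^{\bullet}[-n]))$, I reduce to the case $n=0$. It then suffices to show that for every $0$-cocycle $\zeta$ of $\widetilde{\Lambda}^{\bullet}(L^{\bullet},M^{\bullet})$ there exists a quasi-isomorphism $f:M^{\bullet}\to M'^{\bullet}$ in $\pHC$ such that $f(\zeta)$ is a coboundary in $\widetilde{\Lambda}^{\bullet}(L^{\bullet},M'^{\bullet})$.

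Next I would unpack the cocycle condition. Pick a representative $(x,y,z,w)\in\Ker^0\Psi$. Then $\Psi(x,y,z,w)=x-p\phi(x)-N(y)=0$, and the condition $d(x,y,z,w)\in\Phi(\mathcal{A}^1)$ produces data $(s,t,u)\in\mathcal{A}^1(L^{\bullet},M^{\bullet})$ with $s\in\iHom^1(L_{\rig}^{\bullet},M_{\rig}^{\bullet})$, $t\in\iHom^1(L_K^{\bullet},M_K^{\bullet})$, $u\in F^0\iHom^1(L_{\dR}^{\bullet},M_{\dR}^{\bullet})$, whose image under $\Phi$ gives back $d(x,y,z,w)$. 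This records four compatibilities of $x,y,z,w$ with the differentials via $N$, $\phi$, $\alpha$ and $\beta$ respectively, and will be the input for the explicit construction of $M'^{\bullet}$.

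Now I would build $M'^{\bullet}$ by enlarging each specialization of $M^{\bullet}$ by several shifted copies of the corresponding specialization of $L^{\bullet}$ (as in the seven-summand formula of Proposition \ref{limitacyclic1}), and then define extra differentials, $\phi$, $N$, Hodge filtration, and adjusted comparison maps $\alpha'$, $\beta'$ using the data $(x,y,z,w,s,t,u)$. Because $L^{\bullet}$ has identity comparison maps by hypothesis, the added layers can be indexed consistently across the rigid, $K$-, and de Rham specializations, so that $\alpha'$ and $\beta'$ restrict to the identity between these new layers; this is where the assumption on $L^{\bullet}$ is used essentially. The rigid specialization then runs exactly the construction of Proposition \ref{limitacyclic1} to absorb $(x,y)$, while the $K$- and de Rham specializations carry parallel $L^{\bullet}$-layers linked by $z$ and $w$ through $\alpha'$ and $\beta'$. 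The inclusion $f:M^{\bullet}\to M'^{\bullet}$ has cokernel on each specialization isomorphic to a sum of shifts of $\Cone(L^{\bullet}\xrightarrow{\mathrm{id}}L^{\bullet})$, hence acyclic; since the rigid and de Rham layers also carry the prescribed $\phi$, $N$ and $F^{\bullet}$ (and nilpotence of $N$ on $M^{\bullet}$ implies nilpotence on $M'^{\bullet}$), $M'^{\bullet}$ is genuinely an object of $\pHC$ and $f$ is a quasi-isomorphism. Finally I would write down explicit elements $(\lambda,\mu,\nu)\in\mathcal{A}^{-1}(L^{\bullet},M'^{\bullet})$ and $(a,b,c,e)\in\Ker^{-1}\Psi(L^{\bullet},M'^{\bullet})$, living in the newly added coordinates, satisfying $f(x,y,z,w)=\Phi(\lambda,\mu,\nu)+d(a,b,c,e)$, which exhibits $f(\zeta)$ as a coboundary.

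The main obstacle is the coordination between the three specializations: the extra differential data introduced on $M'^{\bullet}_{\rig}$, $M'^{\bullet}_K$, and $M'^{\bullet}_{\dR}$ must be linked by comparison maps $\alpha'$ and $\beta'$ that are honest morphisms in $C^b(\Vec_K)$, and simultaneously the rigid side must remain compatible with $\phi$, $N$ and the relation $N\phi=p\phi N$. This is precisely what forces $z$ and $w$ to be used as the gluing data between the rigid batch of added $L$-copies (handling $x,y$) and the $K$-, de Rham batches. I expect the bookkeeping of indices and signs in the analogue of the explicit seven-summand formula of Proposition \ref{limitacyclic1}, now essentially doubled to accommodate the $\alpha$- and $\beta$-gluings, to be the only genuine difficulty; the conceptual content is identical to the filtered $(\phi,N)$-module case.
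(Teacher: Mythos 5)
Your proposal is correct and takes essentially the same route as the paper: reduce to $n=0$, represent the cocycle by $(x,y,z,w)\in\Ker^0\Psi$ together with $(s,t,u)\in\mathcal{A}^1(L^{\bullet},M^{\bullet})$, enlarge the specializations of $M^{\bullet}$ by shifted copies of those of $L^{\bullet}$ as in the seven-summand construction of Proposition \ref{limitacyclic1} (the paper's de Rham part only needs $L_{\dR}^{i}\oplus L_{\dR}^{i-1}$, with the direct-sum filtration), use $z$ and $w$ precisely in the modified comparison maps $\alpha'$, $\beta'$ (which is where the identity comparison maps of $L^{\bullet}$ are needed), and exhibit $f(\zeta)$ as a coboundary via explicit elements $(a,b,c,e)\in\Ker^{-1}\Psi$ and $(\lambda,\mu,\nu)$. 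The only slip is an indexing one: $(\lambda,\mu,\nu)$ should lie in $\mathcal{A}^{0}(L^{\bullet},M'^{\bullet})$, not $\mathcal{A}^{-1}$.
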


\begin{proof}
We can prove this by the idea similar to Proposition \ref{limitacyclic1}. We assume $n=0$. Denote the compression maps of $M^{\bullet}$ by $\alpha$ and $\beta$. Let $\zeta$ be a $0$-cocycle of $\widetilde{\Lambda}(L^{\bullet},M^{\bullet})$. Take $(x,y,z,w)\in\Ker^0\Psi$ representing $\zeta$. Then there exists $(s,t,u)\in\mathcal{A}^1(L^{\bullet},M^{\bullet})$ such that\\
\[-dx=Ns,~-dy=s-\phi s,~-dz=\alpha s_K-t,~-dw=t-\beta u.\]
Let
\begin{eqnarray*}
M'^i_{\rig}&=&M^i_{\rig}\oplus L_{\rig}^{i+1}\oplus L_{\rig}^i\oplus L_{\rig}^{i+1}\oplus L_{\rig}^i\oplus L_{\rig}^i\oplus L_{\rig}^{i-1}\\
M'^i_K&=&M_K^i\oplus L_K^{i+1}\oplus L_K^i\oplus L_K^{i+1}\oplus L_K^i\oplus L_K^i\oplus L_K^{i-1}\\
M^i_{\dR}&=&M^i_{\dR}\oplus L_{\dR}^i\oplus L_{\dR}^{i-1}.
\end{eqnarray*}
We define the differential, monodromy, and Frobenius operators on $M'^{\bullet}_{\rig}$ same as proof of Proposition \ref{limitacyclic1}. Define the differentials on $M'^{\bullet}_K$ and $M'^{\bullet}_{\dR}$ by
\begin{eqnarray*}
&&d(m,\ell_1,\ell_2,\ell_3,\ell_4,\ell_5,\ell_6)\\
&&\hspace{10pt}=(dm+\alpha x\ell_1-\alpha dx\ell_2+\alpha xd\ell_2+\alpha y_K\ell_3-\alpha dy_K\ell_4-\alpha y_Kd\ell_4+t\ell_5+dt\ell_6+td\ell_6,\\
&&\hspace{25pt}-d\ell_1,\ell_1+d\ell_2,-d\ell_3,\ell_3+d\ell_4,d\ell_5,-\ell_5-d\ell_6)\\
&&d(m,\ell_1,\ell_2,\ell_3,\ell_4,\ell_5,\ell_6)\\
&&\hspace{10pt}=(dm+u\ell_1+du\ell_2+ud\ell_2,d\ell_1,-\ell_1-d\ell_2)
\end{eqnarray*}
respectively. The Hodge filtration on $M'^{\bullet}_{\dR}$ is defined by direct sum.
The comparision maps $M'^{\bullet}_{\rig}\otimes K\rightarrow M'^{\bullet}_K$ and $M'^{\bullet}_{\dR}\rightarrow M'^{\bullet}_K$ are defined by
\begin{eqnarray*}
(m,\ell_1,\ell_2,\ell_3,\ell_4,\ell_5,\ell_6)&\mapsto&(\alpha m-z_K\ell_5-dz_K\ell_6+z_Kd\ell_6,\ell_1,\ell_2,\ell_3,\ell_4,\ell_5,\ell_6)\\
(m,\ell_1,\ell_2)&\mapsto&(\beta m+w\ell_1+dw\ell_2+wd\ell_2,0,0,0,0,\ell_1,\ell_2).
\end{eqnarray*}
Define $f=(f_{\rig},f_K,f_{\dR}):M^{\bullet}\rightarrow M'^{\bullet}$ as inclusions to the first summands on specializations.
We define elements $(a,b,c,e)\in\Ker^{-1}\Psi(L^{\bullet},M'^{\bullet})$ and $(\lambda,\mu,\nu)\in\mathcal{A}^0(L^{\bullet},M'^{\bullet})$ by $c=0,~e=0$, and
\begin{eqnarray*}
&&a(\ell)=(0,\ell,0,0,0,0,0),~b(\ell)=(0,0,0,\ell,0,0,0)\\
&&\lambda(\ell)=(0,0,0,0,0,-\ell,0),~\mu(\ell')=(0,0,0,0,0,-\ell',0),~\nu(\ell')=(0,-\ell',0)
\end{eqnarray*}
for every $\ell\in L_{\rig}^i$ and $\ell'\in L_K^i=L_{\dR}^i$. Then we have
\begin{eqnarray*}
f_{\rig}x=da+ad+N\lambda-\lambda N\\
f_{\rig}y=db+bd+\lambda-\phi\lambda\phi^{-1}\\
f_Kz=dc+cd+\alpha\lambda-\mu\alpha\\
f_{\dR}w=de+ed+\mu\beta-\beta\nu
\end{eqnarray*}
so we complete the proof.
\end{proof}

\begin{proposition}\label{limitacyclic4}
Let $L^{\bullet}$ and $M^{\bullet}$ be objects in $\pHC$. Assume that $\phi$ on $L^{\bullet}_{\rig}$ is an automorphism. Then
\[\varinjlim_{M'^{\bullet}}H^n(\hat{\Lambda}^{\bullet}(L^{\bullet},M'^{\bullet}))=0\]
for every integer $n$. Here $M'^{\bullet}$ runs all quasi-isomorphisms $M^{\bullet}\rightarrow M'^{\bullet}$.
\end{proposition}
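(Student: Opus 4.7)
The statement is the $\pHC$ analogue of Proposition \ref{limitacyclic2}, and the plan is to lift that construction from the rigid specialization to the whole of $\pHC$. Since $\Psi(x,y,z,w) = x - p\phi(x) - N(y)$ involves neither $z$ nor $w$, we have
\[\hat{\Lambda}^{\bullet}(L^{\bullet}, M^{\bullet}) = \cCoker \Xi_{M^{\bullet}}, \qquad \Xi_{M^{\bullet}}\colon \cHom(L_{\rig}^{\bullet}, M_{\rig}^{\bullet})^{\oplus 2} \to \cHom(L_{\rig}^{\bullet}, M_{\rig}^{\bullet}),\ (x,y)\mapsto x - p\phi x - Ny,\]
which depends only on the rigid specializations and coincides with the map $\xi$ of Proposition \ref{limitacyclic2} attached to $L_{\rig}^{\bullet}$ and $M_{\rig}^{\bullet}$. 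By shifting we may take $n = 0$; it suffices to produce, for each cocycle $x \in \iHom^0(L_{\rig}^{\bullet}, M_{\rig}^{\bullet})$, a quasi-isomorphism $f\colon M^{\bullet} \to M'^{\bullet}$ in $\pHC$ such that $f_{\rig}(x)$ lies in the image of $\Xi_{M'^{\bullet}}$.

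To construct such an $M'^{\bullet}$ I mimic Proposition \ref{limitacyclic2} inside $\pHC$. Take $\widetilde{M}^{\bullet} = \Cone(M^{\bullet} \xrightarrow{\mathrm{id}} M^{\bullet})[-1]$ in $\pHC$, which is acyclic on every specialization, and form
\[M_k^{\bullet} = M^{\bullet} \oplus \widetilde{M}^{\bullet}(1) \oplus \cdots \oplus \widetilde{M}^{\bullet}(k)\]
with rigid differential, Frobenius, and monodromy given by the ladder formulae of Proposition \ref{limitacyclic2}, whereas on the $K$- and de Rham specializations the structures are the plain direct sums (the Hodge filtration on $\widetilde{M}^{\bullet}(k)_{\dR}$ being that of $M_{\dR}^{\bullet}$ shifted by $k$), and the comparison maps $\alpha,\beta$ are extended summand by summand from those of $M^{\bullet}$. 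Setting $r=2r_0$ with $N^{r_0}=0$ on $M_{\rig}^{\bullet}$ and $M'^{\bullet}=M_r^{\bullet}$, the natural inclusion $f\colon M^{\bullet}\to M'^{\bullet}$ is a quasi-isomorphism in $\pHC$, and the same element $a\in\iHom^0(L_{\rig}^{\bullet},M'^{\bullet}_{\rig})$ exhibited in Proposition \ref{limitacyclic2} yields $\Xi_{M'^{\bullet}}(0,a)=f_{\rig}(x)$, so $f_{\rig}(x)$ becomes a coboundary in $\hat{\Lambda}^{\bullet}(L^{\bullet},M'^{\bullet})$.

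The main obstacle is the bookkeeping verification that $M'^{\bullet}$ is a bona fide object of $\pHC$: the rigid ladder operators must satisfy $N\phi=p\phi N$, the comparison maps $\alpha,\beta$ on the enlarged complex must commute with the differentials, and the Hodge filtration must be preserved by the differential on $M'^{\bullet}_{\dR}$. Because the $K$- and de Rham specializations acquire no new Frobenius or monodromy data (the extra summands enter there as direct sums and the filtration twist only shifts indices), these compatibilities reduce to those already supplied in Proposition \ref{limitacyclic2} together with the trivial ones for each acyclic $\widetilde{M}^{\bullet}(k)$.
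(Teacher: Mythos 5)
Your proposal is correct and follows essentially the same route as the paper: reduce to the rigid specialization since $\Psi$ ignores the $K$- and de Rham components, build $M'^{\bullet}_? = M^{\bullet}_?\oplus\widetilde{M}^{\bullet}_?(1)\oplus\cdots\oplus\widetilde{M}^{\bullet}_?(r)$ with the ladder Frobenius and monodromy of Proposition \ref{limitacyclic2} on the rigid part and plain direct sums (with summandwise comparison maps) elsewhere, and reuse the element $a$ to write $f_{\rig}(x)$ as $\Xi_{M'^{\bullet}}(0,a)$. The bookkeeping you flag is exactly what the paper leaves as a straightforward verification, so there is no gap.
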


\begin{proof}
Define $\xi_{M^{\bullet}}:\cHom(L_{\rig}^{\bullet},M_{\rig}^{\bullet})\oplus\cHom(L_{\rig}^{\bullet},M_{\rig}^{\bullet})\rightarrow \cHom(L_{\rig}^{\bullet},M_{\rig}^{\bullet})$ by $\xi(x,y)=x-p\phi(x)-N(y)$. 
Let $r=2r_0$ be an integer such that $N^{r_0}$ on $M^{\bullet}$ is zero. Let $\widetilde{M}^{\bullet}=\Cone(M^{\bullet}\xrightarrow[]{\mathrm{id}}M^{\bullet})[-1]$. For $?\in\{\rig,K,\dR\}$, put $M'^{\bullet}_?=M^{\bullet}_?\oplus \widetilde{M}^{\bullet}_?(1)\oplus\cdots\oplus\widetilde{M}^{\bullet}_?(r)$ as complexes. Define the monodromy and Frobenius operators on $M'^{\bullet}_{\rig}$ as the proof of Proposition \ref{limitacyclic2}. Let $f:M^{\bullet}\rightarrow M'^{\bullet}$ the natural inclusion. Then $f(x)$ lies in the image of $\xi_{M'^{\bullet}}$ for any $x\in\iHom^0(L^{\bullet}_{\rig},M^{\bullet}_{\rig})$.
\end{proof}

\begin{theorem}\label{exthdg}
Let $L^{\bullet}$ and $M^{\bullet}$ be objects in $\pHC$. Assume that comparision maps of $L^{\bullet}$ are identity maps, and $\phi$ on $L_{\rig}^{\bullet}$ is an automorphism. Then we have the canonical isomorphism
\[\Ext^n(L^{\bullet},M^{\bullet})=\Hom_{\pHD}(L^{\bullet},M^{\bullet}[n])\cong H^n(\Lambda^{\bullet}(L^{\bullet},M^{\bullet})[-2]).\]
\end{theorem}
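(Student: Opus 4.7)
The plan is to follow the strategy of Theorem \ref{extmf} essentially verbatim, replacing the triple $(A^\bullet,B^\bullet,C^\bullet)$ with $(\mathcal{A}^\bullet,\mathcal{B}^\bullet,\mathcal{C}^\bullet)$ and $(\varphi,\psi)$ with $(\Phi,\Psi)$. The two heavy lemmas in the direct-limit argument are already available as Proposition \ref{limitacyclic3} and Proposition \ref{limitacyclic4}, so the remaining tasks are the analogues of Lemma \ref{homotopycathom} and Lemma \ref{coneqis}.

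First I would establish
\[H^n(\cKer\Phi(L^\bullet,M^\bullet)) \cong \Hom_{\pHK}(L^\bullet,M^\bullet[n]).\]
An element of $\mathcal{A}^n$ is a triple $(x,y,z)$ of graded maps on the rigid, $K$- and de Rham specializations, with $z$ lying in $F^0$. Being a cocycle of the $\cHom$-complexes is exactly compatibility with the differentials, and $\Phi(x,y,z)=0$ unpacks precisely as $N(x)=0$, $\phi(x)=x$, $\alpha x = y\alpha$, and $y\beta=\beta z$, i.e.\ compatibility with the monodromy, Frobenius, and the two comparison maps. Together these say exactly that $(x,y,z)$ defines a morphism $L^\bullet\to M^\bullet[n]$ in $\pHC$. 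A coboundary yields a triple of null-homotopies on the three specializations that are compatible with $\alpha$ and $\beta$, i.e.\ a null-homotopy in $\pHC$. The argument is a straightforward expansion of the proof of Lemma \ref{homotopycathom}, with one extra slot of comparison data.

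Next I would note that $\Lambda^\bullet(L^\bullet,-) : \pHC \to C^b(\Vec_{\cQ_p})$ is exact, since each of the $\cHom$-functors, the functor $F^0$, the mapping-cone construction, and totalization are exact in the $M^\bullet$-variable. Consequently any quasi-isomorphism $M^\bullet\to M'^\bullet$ induces a quasi-isomorphism on $\Lambda^\bullet$, which is the analogue of Lemma \ref{coneqis}.

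With these two observations in hand, I would take the cohomology long exact sequences of the two distinguished triangles \eqref{distinguished2} and pass to the direct limit over all quasi-isomorphisms $M^\bullet\to M'^\bullet$. By Proposition \ref{limitacyclic3} and Proposition \ref{limitacyclic4} the $\widetilde{\Lambda}$- and $\hat{\Lambda}$-contributions vanish in the limit, yielding
\[\varinjlim_{M'^\bullet} H^n(\cKer\Phi'(L^\bullet,M'^\bullet)) \;\cong\; \varinjlim_{M'^\bullet} H^n(\Lambda^\bullet(L^\bullet,M'^\bullet)[-2]).\]
Using $\cKer\Phi' = \cKer\Phi$ and the homotopy-category identification above, the left-hand side becomes $\varinjlim_{M'^\bullet}\Hom_{\pHK}(L^\bullet,M'^\bullet[n]) = \Hom_{\pHD}(L^\bullet,M^\bullet[n])$, while the exactness of $\Lambda^\bullet(L^\bullet,-)$ lets one drop the limit on the right to obtain $H^n(\Lambda^\bullet(L^\bullet,M^\bullet)[-2])$. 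I expect no real obstacle: the only delicate point is the bookkeeping in the first step, where one must verify that the four coordinate conditions from $\Phi(x,y,z)=0$ really do package exactly into a morphism in $\pHC$; the hypotheses that the comparison maps of $L^\bullet$ are identity maps and that $\phi$ is invertible on $L_{\rig}^\bullet$ have already been built into Propositions \ref{limitacyclic3} and \ref{limitacyclic4} and are what make the limit vanishings work.
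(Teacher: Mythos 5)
Your proposal is correct and is essentially the paper's own proof: the paper disposes of this theorem by saying it follows from Propositions \ref{limitacyclic3} and \ref{limitacyclic4} in the same way as Theorem \ref{extmf}, and the two auxiliary facts you spell out --- the identification $H^n(\cKer\Phi(L^{\bullet},M^{\bullet}))\cong\Hom_{\pHK}(L^{\bullet},M^{\bullet}[n])$ and the invariance of $\Lambda^{\bullet}(L^{\bullet},-)$ under quasi-isomorphisms --- are exactly the analogues of Lemma \ref{homotopycathom} and Lemma \ref{coneqis} that this reference implicitly invokes before passing to the limit over quasi-isomorphisms $M^{\bullet}\rightarrow M'^{\bullet}$. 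The one caveat, inherited from the paper's framework rather than introduced by you, is that the asserted exactness of the term $F^0\cHom(L_{\dR}^{\bullet},-)$ tacitly requires quasi-isomorphisms in $\pHC$ to behave as filtered quasi-isomorphisms on the de Rham specialization, which is the same reading the paper's own argument needs.
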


\begin{proof}
It follows from Proposition \ref{limitacyclic3} and Proposition \ref{limitacyclic4} in the same way as Theorem \ref{extmf}.
\end{proof}

\subsection{Equivalence of derived categories}
Let $\pHD^{\leq 0}$ (resp. $\pHD^{\geq 0}$) be the full subcategory of $\pHD$ consisting of objects such that the specializations are acyclic in degree $>0$ (resp. $<0$). Then $(\pHD^{\leq 0},\pHD^{\geq 0})$ is a non-degenerate $t$-structure on $\pHD$. The natural functor $C^b(\MF)\rightarrow \pHC$ induces the functor $\Theta:D^b(\MF)\rightarrow \pHD$ which is compatible with $t$-structures. Through this functor, we regard $\MF$ to be a subcategory of the heart of the $t$-structure on $\pHD$.

We say that an object in $\pHD$ is strict if it is represented by an object $M^{\bullet}$ in $\pHC$ such that $d^i(F^jM^i_{\dR})=d^i(M^i_{\dR})\cap F^jM^{i+1}_{\dR}$ for any $i$ and $j$. Let $\widetilde{\pHD}$ be the full subcategory of strict objects in $\pHD$ whose cohomology objects with respect to the $t$-structure are in $\MF$.
Then $\widetilde{\pHD}$ is a trianglated subcategory of $\pHD$, and the heart of induced $t$-structure is $\MF$.

\begin{theorem}\label{cateqthm}
The functor $\Theta:D^b(\MF)\rightarrow\widetilde{\pHD}$ is an equivalence.
\end{theorem}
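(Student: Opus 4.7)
My plan is to verify the equivalence by the standard dévissage along the $t$-structure: show that $\Theta$ is $t$-exact, that it is fully faithful, and that it is essentially surjective. The first property is immediate from the constructions, since the $t$-structure on $\widetilde{\pHD}$ is induced from the canonical truncations on each specialization and these agree with the truncations in $C^b(\MF)$ for objects coming from $\MF$; moreover $\Theta$ restricts to the tautological identification of $\MF$ with the heart of $\widetilde{\pHD}$. The theorem therefore reduces to proving full faithfulness and essential surjectivity.

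For full faithfulness, let $L^\bullet, M^\bullet \in C^b(\MF)$. Because each object of $\MF$ has $\phi$ an isomorphism and because $\Theta L^\bullet$ has identity comparison maps $\alpha, \beta$, the hypotheses of Theorem \ref{exthdg} hold. Combining Theorem \ref{extmf} and Theorem \ref{exthdg}, the map
\[
\Theta : \Hom_{D^b(\MF)}(L^\bullet, M^\bullet[n]) \to \Hom_{\pHD}(\Theta L^\bullet, \Theta M^\bullet[n])
\]
is realized as a map $H^n(\Gamma^\bullet(L^\bullet, M^\bullet)[-2]) \to H^n(\Lambda^\bullet(\Theta L^\bullet, \Theta M^\bullet)[-2])$. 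I would prove this is an isomorphism by constructing a natural quasi-isomorphism of the underlying double complexes $(A^\bullet, B^\bullet, C^\bullet) \to (\mathcal{A}^\bullet, \mathcal{B}^\bullet, \mathcal{C}^\bullet)$. The point is that, since both $\Theta L^\bullet$ and $\Theta M^\bullet$ have their $K$- and de Rham specializations equal to the base change of their rigid specializations and the comparison maps are the identity, the extra $\cHom(L_K, M_K)$ summand appearing in $\mathcal{A}^\bullet$ and $\mathcal{B}^\bullet$ beyond $A^\bullet$, $B^\bullet$ is rigidified: the components of $\Phi$ involving $\alpha, \beta$ force this summand to coincide with the base change of $\cHom(L, M)$. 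The natural inclusion $(x, y) \mapsto (x, x_K, y)$ on $A^\bullet$ extends, with appropriate signs, to a map of complexes whose quotient total complex is contractible, in the spirit of the quasi-isomorphism $\Lambda^\bullet(K_0, M^\bullet) \simeq \Lambda_0^\bullet(M^\bullet)$ of Remark \ref{coneremark}, generalized to arbitrary $\Theta L^\bullet$. Tracking the correct signs and verifying compatibility with $\psi$ and $\Psi$ is the main technical obstacle.

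For essential surjectivity, let $X \in \widetilde{\pHD}$ with cohomological support in degrees $[a, b]$; each $H^i(X)$ lies in $\MF$ by the very definition of $\widetilde{\pHD}$, the strictness condition being exactly what is needed so that cohomology carries a well-defined Hodge filtration placing it in $\MF$. Proceed by induction on $b - a$. If $b - a = 0$, then $X \cong H^a(X)[-a] \cong \Theta(H^a(X)[-a])$. Otherwise, the truncation triangle
\[
\tau^{\leq b-1} X \to X \to H^b(X)[-b] \xrightarrow{\delta} (\tau^{\leq b-1} X)[1]
\]
in $\widetilde{\pHD}$ yields, by the inductive hypothesis, an isomorphism $\tau^{\leq b-1} X \cong \Theta Y$ for some $Y \in D^b(\MF)$. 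The connecting morphism $\delta$ lives in $\Hom_{\pHD}(\Theta(H^b(X)[-b]), \Theta(Y[1]))$, which by the full faithfulness established above is canonically identified with $\Hom_{D^b(\MF)}(H^b(X)[-b], Y[1])$; let $\widetilde{\delta}$ be the unique preimage. Setting $X' = \Cone(\widetilde{\delta})[-1] \in D^b(\MF)$, the image under $\Theta$ of the defining triangle of $X'$ is isomorphic to the original truncation triangle, whence $X \cong \Theta X'$, completing the induction and the proof.
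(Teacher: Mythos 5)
Your proposal is correct, and its computational core coincides with the paper's: both reduce the comparison of $\Ext$-groups to the quasi-isomorphism between $\Gamma^{\bullet}$ and $\Lambda^{\bullet}$ supplied by Theorem \ref{extmf} and Theorem \ref{exthdg} (noting, as you do, that for objects in the image of $\Theta$ the comparison maps are identities and $\phi$ is bijective, so Theorem \ref{exthdg} applies, and that the extra $\cHom$-summands in $\mathcal{A}^{\bullet}$ and $\mathcal{B}^{\bullet}$ form an acyclic quotient). Where you diverge is in the reduction mechanism: the paper invokes Beilinson's Lemma 1.4, which says that a $t$-exact functor inducing an equivalence on hearts is an equivalence onto the subcategory of objects with cohomology in the heart as soon as it induces isomorphisms on $\Ext^n(L,M)$ for single objects $L,M\in\MF$ and $n>0$; consequently the paper only has to exhibit the $\Gamma^{\bullet}$--$\Lambda^{\bullet}$ quasi-isomorphism for $L,M$ in the heart, where it is a short explicit check. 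You instead carry out the d\'evissage by hand: full faithfulness for arbitrary bounded complexes via the same comparison (now stated for complexes, which is the same formal computation but with signs to track), followed by essential surjectivity by induction on the length of cohomological support using truncation triangles and the lifted connecting morphism $\widetilde{\delta}$. This amounts to reproving Beilinson's lemma inline; it buys self-containedness at the cost of a slightly heavier verification (the quasi-isomorphism for complexes, compatibility of the identifications in Theorems \ref{extmf} and \ref{exthdg} with $\Theta$, and the fact that truncations preserve $\widetilde{\pHD}$ --- the last point being exactly what the paper's assertion that the $t$-structure restricts to $\widetilde{\pHD}$ provides, so you are on the same footing as the paper there). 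Both routes are valid; the paper's is shorter because the lemma confines the explicit computation to the heart.
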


\begin{proof}
By definition, this functor induces an equivalence on the hearts. Thus, by \cite{Beilinson} Lemma 1.4, it suffices to show that for any objects $L$ and $M$ in $\MF$ and $n>0$ $\Ext^n(L,M)\rightarrow\Ext^n(\Theta(L),\Theta(M))$ is an isomorphism. By Theorem \ref{extmf} and Theorem \ref{exthdg}, it suffices to show that $\Gamma^{\bullet}(L,M)[-2]$ and $\Lambda^{\bullet}(\Theta(L),\Theta(M))[-2]$ are quasi-isomorphic. Explicitly, $\Gamma^{\bullet}(L,M)[-2]$ is
\begin{eqnarray*}
&&\underline{\Hom}(L,M)\oplus F^0\underline{\Hom}(L_K,M_K)\\
&\rightarrow&\underline{\Hom}(L,M)\oplus\underline{\Hom}(L,M)\oplus\underline{\Hom}(L_K,M_K)\\
&\rightarrow&\underline{\Hom}(L,M)
\end{eqnarray*}
here two arrows are defined by $(x,y)\mapsto(N(x),x-\phi(x),y-x)$ and $(x,y,z)\mapsto x-p\phi(x)-N(y)$.
And $\Lambda^{\bullet}(\Theta(L),\Theta(M))[-2]$ is
\begin{eqnarray*}
&&\underline{\Hom}(L,M)\oplus\underline{\Hom}(L_K,M_K)\oplus F^0\underline{\Hom}(L_K,M_K)\\
&\rightarrow&\underline{\Hom}(L,M)\oplus\underline{\Hom}(L,M)\oplus\underline{\Hom}(L,M_K)\oplus\underline{\Hom}(L_K,M_K)\\
&\rightarrow&\underline{\Hom}(L,M),
\end{eqnarray*}
here two arrows are defined by $(x,y,z)\mapsto(N(x),x-\phi(x),x-y,y-z)$ and $(x,y,z,w)\mapsto x-p\phi(x)-N(y)$.
One can easily check that these are quasi-isomorphic.
\end{proof}

\section{Log rigid cohomology}
\subsection{Generalized Godement resolution}
To define the functorial $p$-adic Hodge complexes associated to strictly semistable schemes, we need a functorial flasque resolution of sheaves on dagger spaces. For this, dagger spaces do not have enough points. Namely, a sheaf on a dagger space can be non-trivial even if its stalk at every point of underlying set is trivial. Using the points in the sense of van der Put and Schneider \cite{vdPS}, we obtain the generalized Godement resolution following \cite{CCM}.

For a site $X$, we write the category of sheaves of abelian groups on $X$ by $\Sh(X)$.

\begin{definition}
Let $u:P\rightarrow X$ be a morphism of sites, $u^*:\Sh(X)\rightarrow\Sh(P)$ be the functor induced by $u$. Let $\eta:\mathrm{id}_{\Sh(X)}\rightarrow u_*u^*$ and $\epsilon:u^*u_*\rightarrow\mathrm{id}_{\Sh(P)}$ be the natural transformations given by adjoint property. For an object $\mathcal{F}$ in $\Sh(X)$ and $n\geq 0$, let $B^{n+1}(\mathcal{F})=(u_*u^*)^n(\mathcal{F})$. Then we get a co-simplicial sheaf $B^{\bullet}(\mathcal{F})$ with $(u_*u^*)^iu_*\epsilon u^*(u_*u^*)^{n-1-i}$ and $(u_*u^*)^i\eta(u_*u^*)^{n-i}$ as $i$-th co-degeneracy and $i$-th co-face. Let $\Gd_P\mathcal{F}$ be the associated complex of $\Sh(X)$.

For a complex $\mathcal{F}^{\bullet}$ of sheaves on $X$, let $\Gd_P(\mathcal{F}^{\bullet})$ be the total complex of the double complex $\Gd_P(\mathcal{F}^i)^j$.
\end{definition}

\begin{proposition}[\cite{CCM} Lemma 3.1]\label{godement1}
There is a canonical morphism $b_{\mathcal{F}}:\mathcal{F}\rightarrow\Gd_P\mathcal{F}$, which is quasi-isomorphism if $u^*$ is exact and conservative. 
\end{proposition}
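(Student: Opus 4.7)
The plan is to define $b_{\mathcal{F}}$ via the adjunction unit and then to prove the quasi-isomorphism assertion by exhibiting, after applying $u^*$, a cosimplicial extra codegeneracy coming from the counit $\epsilon$, which yields a contracting homotopy on the associated cochain complex. Concretely, I would define $b_{\mathcal{F}}$ by regarding $\mathcal{F}$ as a constant cosimplicial sheaf and using $\eta_{\mathcal{F}}:\mathcal{F}\to u_*u^*\mathcal{F}$ as the augmentation to $B^{\bullet}(\mathcal{F})$; compatibility with the co-faces $(u_*u^*)^i\eta(u_*u^*)^{n-i}$ reduces to the naturality of $\eta$ together with one triangle identity. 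Passing to the associated chain complex yields the canonical map $b_{\mathcal{F}}:\mathcal{F}\to\Gd_P\mathcal{F}$.

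The crucial step is to show that, after applying $u^*$, the augmented cosimplicial object $u^*\mathcal{F}\to u^*B^{\bullet}(\mathcal{F})$ in $\Sh(P)$ is split augmented. For this I would introduce the extra codegeneracy obtained by applying the counit $\epsilon:u^*u_*\to\mathrm{id}_{\Sh(P)}$ to the outermost copy of $u^*u_*$ in $u^*(u_*u^*)^{n}\mathcal{F} = (u^*u_*)^{n}u^*\mathcal{F}$, and verify the cosimplicial identities relating this new codegeneracy to the existing co-faces and co-degeneracies using the triangle identity $\epsilon u^*\circ u^*\eta = \mathrm{id}_{u^*}$ and the naturality of $\epsilon$. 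A standard formal computation then converts the split augmentation into a cochain contracting homotopy on $u^*\mathrm{Cone}(b_{\mathcal{F}})$, so $u^*b_{\mathcal{F}}$ is a quasi-isomorphism.

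To descend to the original statement I would invoke the hypotheses that $u^*$ is exact and conservative. Exactness means that $u^*$ commutes with the formation of cones and with cohomology sheaves, so $u^*\mathcal{H}^i(\mathrm{Cone}\,b_{\mathcal{F}})\cong\mathcal{H}^i(u^*\mathrm{Cone}\,b_{\mathcal{F}})=0$ for every $i$. Conservativity reflects isomorphisms, so applied to the zero morphism $\mathcal{H}^i(\mathrm{Cone}\,b_{\mathcal{F}})\to 0$, which becomes an isomorphism after $u^*$, it forces $\mathcal{H}^i(\mathrm{Cone}\,b_{\mathcal{F}})=0$ for every $i$, i.e.\ $b_{\mathcal{F}}$ is a quasi-isomorphism. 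The main obstacle is the bookkeeping needed to verify the cosimplicial identities for the extra codegeneracy; the paper's indexing convention $B^{n+1}(\mathcal{F})=(u_*u^*)^n(\mathcal{F})$ introduces a shift that requires some care, but once the split augmentation is established the remaining steps are purely formal.
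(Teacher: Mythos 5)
Your argument is correct: defining $b_{\mathcal{F}}$ by the unit $\eta$ as augmentation, producing an extra codegeneracy from the counit $\epsilon$ on $u^*(u_*u^*)^{\bullet}\mathcal{F}=(u^*u_*)^{\bullet}u^*\mathcal{F}$ to get a contracting homotopy, and then concluding with exactness (to commute $u^*$ past cohomology) and conservativity (to reflect the vanishing) is exactly the standard proof. The paper itself gives no proof but quotes this as \cite{CCM} Lemma 3.1, and your route coincides with the argument there, so nothing further is needed.
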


\begin{proposition}[\cite{CCM} Lemma 3.2]\label{godement2}
Suppose we are given a commutative diagram of sites
\[\xymatrix{
P\ar[r]^g\ar[d]_u\comm[rd]&Q\ar[d]^v\\
X\ar[r]_f&Y,
}\]
and sheaves $\mathcal{F}$ on $Y$ and $\mathcal{G}$ on $X$, and a morphism $a:\mathcal{G}\rightarrow f_*\mathcal{F}$. Then there exists a canonical morphism $\Gd_Q\mathcal{G}\rightarrow f_*\Gd_P\mathcal{F}$ which is compatible with $b_{\mathcal{F}}$ and $b_{\mathcal{G}}$.
\end{proposition}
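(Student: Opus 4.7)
The plan is to construct the morphism at the level of the underlying cosimplicial sheaves $B^{\bullet}_v(\mathcal{G}) \to f_* B^{\bullet}_u(\mathcal{F})$ and then pass to associated complexes. The commutative square of sites gives the natural identification $f_* u_* = v_* g_*$ of functors $\Sh(P) \to \Sh(Y)$. Combining this with the unit $\eta_u \colon \mathrm{id} \to u_* u^*$ and the counit $\epsilon_v \colon v^* v_* \to \mathrm{id}$ produces the base change natural transformation
\[
\mathrm{bc} \colon v^* f_* \;\xrightarrow{v^* f_* \eta_u}\; v^* f_* u_* u^* \;=\; v^* v_* g_* u^* \;\xrightarrow{\epsilon_v g_* u^*}\; g_* u^*,
\]
which is entirely formal and independent of any sheaf data.

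At the first non-trivial level, the map $v_* v^* \mathcal{G} \to f_* u_* u^* \mathcal{F}$ is obtained by applying $v_*$ to the composite
\[
v^* \mathcal{G} \xrightarrow{v^*(a)} v^* f_* \mathcal{F} \xrightarrow{\mathrm{bc}(\mathcal{F})} g_* u^* \mathcal{F},
\]
and then re-expressing $v_* g_* u^* \mathcal{F}$ as $f_* u_* u^* \mathcal{F}$. For higher $n$, I would define inductively a morphism $(v_* v^*)^n \mathcal{G} \to f_*(u_* u^*)^n \mathcal{F}$ by applying $v_* v^*$ to the previous stage and inserting $v_*(\mathrm{bc})$ at the innermost position, using $f_* u_* = v_* g_*$ to line up the target at each step. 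What remains is to verify that these maps commute with the cofaces $(u_* u^*)^i \eta_u (u_* u^*)^{n-i}$ and codegeneracies $(u_* u^*)^i u_* \epsilon_u u^* (u_* u^*)^{n-1-i}$ (and their $v$-analogues), which reduces to the naturality of $\mathrm{bc}$ together with the triangle identities for the adjunctions $(u^*, u_*)$ and $(v^*, v_*)$.

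Passing to associated complexes yields the desired morphism $\Gd_Q\mathcal{G} \to f_* \Gd_P\mathcal{F}$. Compatibility with the augmentations $b_{\mathcal{G}}$ and $b_{\mathcal{F}}$ is immediate, since $b$ is by construction the inclusion of the degree-zero term and the degree-zero component of the cosimplicial morphism is just $a$ itself. The main obstacle I foresee is the notational bookkeeping at higher cosimplicial levels: the number of factors of $u_*, u^*, v_*, v^*$ grows with $n$, and one must carefully track at which position $\mathrm{bc}$ is inserted in order to compare coface and codegeneracy maps on the two sides. Conceptually, everything boils down to the naturality of a single base change $2$-cell in the $2$-category of sites, so the cleanest organization of the verification is to view the whole construction as pasting of this $2$-cell against the cosimplicial bar resolutions.
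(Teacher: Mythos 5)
Your construction via the base-change transformation $v^*f_*\rightarrow g_*u^*$ (built from $f_*u_*=v_*g_*$, the unit $\eta_u$ and the counit $\epsilon_v$), iterated over the cosimplicial degrees and checked against cofaces and codegeneracies by naturality and the triangle identities, is exactly the intended argument: the paper gives no proof of this statement but quotes it as \cite{CCM} Lemma 3.2, whose proof proceeds in the same formal way, so your proposal is correct and matches the cited approach (note also that the paper's statement has the sheaves on the wrong sites; your reading, with $\mathcal{F}$ on $X$ and $\mathcal{G}$ on $Y$, is the intended one). One small correction: the augmentation $b_{\mathcal{F}}$ is the unit $\mathcal{F}\rightarrow u_*u^*\mathcal{F}$ rather than an inclusion of a degree-zero term, so the degree-zero component of your morphism is $v_*(\mathrm{bc}\circ v^*a)$ and not $a$ itself, and the compatibility with $b_{\mathcal{F}}$, $b_{\mathcal{G}}$ is not immediate but follows from the same naturality-plus-triangle-identity computation you already invoke for the coface maps.
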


\begin{definition}[prime filter]
Let $\mathcal{X}$ be a rigid analytic space over $K$. A prime filter on $\mathcal{X}$ is a system $\mathfrak{p}$ of admissible open subsets of $\mathcal{X}$ such that
\begin{itemize}
\item $\mathfrak{p}$ contains $\emptyset$ and $\mathcal{X}$.
\item If $\mathcal{U}_1,\mathcal{U}_2\in\mathfrak{p}$, then $\mathcal{U}_1\cap\mathcal{U}_2\in\mathfrak{p}$.
\item If $\mathcal{U}\in\mathfrak{p}$, then every admissible open subset of $\mathcal{X}$ which contains $\mathcal{U}$ is also in $\mathfrak{p}$.
\item If $\mathcal{U}\in\mathfrak{p}$ and $\{\mathcal{U}_i\}_{i\in I}$ is an admissible covering of $\mathcal{U}$, then $\mathcal{U}_{i_0}\in\mathfrak{p}$ for some $i_0\in I$.
\end{itemize}
\end{definition}

For an admissible open subset $\mathcal{U}$ of $\mathcal{X}$, Let $\widetilde{\mathcal{U}}$ be the set of prime filters on $\mathcal{X}$ containing $\mathcal{U}$. 
Let $P'(\mathcal{X})$ (resp. $Pt'(\mathcal{X})$) be the sets of prime filters on $\mathcal{X}$ with the topology generated by all $\widetilde{\mathcal{U}}$'s (resp. with the discrete topology). Let $P(\mathcal{X})$ and $Pt(\mathcal{X})$ be the site associated to topological spaces $P'(\mathcal{X})$ and $Pt'(\mathcal{X})$. Let $\sigma:P(\mathcal{X})\rightarrow\mathcal{X}$ be the morphism of sites defined by $\mathcal{U}\mapsto\widetilde{\mathcal{U}}$. Let $\xi:Pt(\mathcal{X})\rightarrow\mathcal{X}$ be the composition of the natural morphism $Pt(\mathcal{X})\rightarrow P(\mathcal{X})$ with $\sigma$.

\begin{proposition}[\cite{CCM} Lemma 3.8]\label{godement3}
$\xi^*:\Sh(\mathcal{X})\rightarrow\Sh(Pt(\mathcal{X}))$ is exact and conservative.
\end{proposition}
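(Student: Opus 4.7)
The plan is to unpack what $\xi^*$ actually does, reduce exactness to exactness of filtered colimits, and prove conservativity by a Zorn's lemma construction that produces enough prime filters. First I would observe that since $Pt'(\mathcal{X})$ is discrete, every singleton is open, and a sheaf of abelian groups on the site $Pt(\mathcal{X})$ amounts to a family $(A_{\mathfrak{p}})_{\mathfrak{p}}$ indexed by prime filters, with morphisms being componentwise maps. Factoring $\xi$ as $Pt(\mathcal{X}) \to P(\mathcal{X}) \xrightarrow{\sigma} \mathcal{X}$ and using that the open neighborhoods of $\mathfrak{p}$ in $P'(\mathcal{X})$ are exactly the basic opens $\widetilde{\mathcal{U}}$ for $\mathcal{U} \in \mathfrak{p}$, I obtain the explicit description
\[ (\xi^*\mathcal{F})(\mathfrak{p}) \;=\; \varinjlim_{\mathcal{U} \in \mathfrak{p}} \mathcal{F}(\mathcal{U}), \]
with transition maps given by restriction. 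The intersection axiom for prime filters makes $\mathfrak{p}$, ordered by reverse inclusion, a filtered poset.

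For exactness, I would note that a sequence of sheaves on the discrete site $Pt(\mathcal{X})$ is exact if and only if each componentwise sequence is exact, so it suffices to verify exactness of each stalk functor $\mathcal{F} \mapsto \varinjlim_{\mathcal{U} \in \mathfrak{p}} \mathcal{F}(\mathcal{U})$. This is a filtered colimit of left-exact section functors, hence commutes with finite limits in $\mathbf{Ab}$; combined with right exactness of $\xi^*$ as a pullback, this gives exactness on both sides.

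Conservativity amounts to showing that every non-zero $\mathcal{F}$ has a non-zero stalk at some prime filter. I would pick an admissible open $\mathcal{U}_0$ and a section $0 \neq s \in \mathcal{F}(\mathcal{U}_0)$, and consider the family $\mathcal{S}$ of filters $F$ of admissible opens satisfying the first three axioms of a prime filter with $\mathcal{U}_0 \in F$ and $s|_{\mathcal{U}_0 \cap \mathcal{V}} \neq 0$ for every $\mathcal{V} \in F$. The principal filter of $\mathcal{U}_0$ lies in $\mathcal{S}$, and $\mathcal{S}$ is closed under chain unions, so by Zorn's lemma there exists a maximal element $\mathfrak{p} \in \mathcal{S}$; by construction the image of $s$ in $(\xi^*\mathcal{F})(\mathfrak{p})$ is non-zero.

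The main obstacle is verifying that this $\mathfrak{p}$ satisfies the fourth, ``primality'' axiom: if $\{\mathcal{V}_i\}_{i \in I}$ is an admissible covering of some $\mathcal{V} \in \mathfrak{p}$, some $\mathcal{V}_{i_0}$ must lie in $\mathfrak{p}$. My strategy is to apply the sheaf axiom for $\mathcal{F}$ to the cover $\{\mathcal{U}_0 \cap \mathcal{V} \cap \mathcal{V}_i\}_i$ of $\mathcal{U}_0 \cap \mathcal{V}$, using $s|_{\mathcal{U}_0 \cap \mathcal{V}} \neq 0$, to produce an index $i_0$ with $s|_{\mathcal{U}_0 \cap \mathcal{V} \cap \mathcal{V}_{i_0}} \neq 0$, and then to show that the filter generated by $\mathfrak{p} \cup \{\mathcal{V}_{i_0}\}$ still lies in $\mathcal{S}$. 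The delicate point is that every newly generated element contains some $\mathcal{V}_{i_0} \cap \mathcal{V}'$ for $\mathcal{V}' \in \mathfrak{p}$, and one must combine the filter's intersection axiom with the sheaf axiom on each such intersection to see that $s$ does not vanish there. Maximality of $\mathfrak{p}$ then forces $\mathcal{V}_{i_0} \in \mathfrak{p}$, completing the argument.
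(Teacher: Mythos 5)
Your description of $\xi^*$ and the exactness half are fine: on the discrete site $Pt(\mathcal{X})$ a sheaf is just a family indexed by prime filters, $(\xi^*\mathcal{F})(\mathfrak{p})=\varinjlim_{\mathcal{U}\in\mathfrak{p}}\mathcal{F}(\mathcal{U})$ is a filtered colimit of left exact section functors, and right exactness is formal from adjunction. The genuine gap is in the conservativity argument, exactly at the point you call delicate, and it is not a bookkeeping issue. Producing one index $i_0$ with $s|_{\mathcal{U}_0\cap\mathcal{V}\cap\mathcal{V}_{i_0}}\neq 0$ does not show that the filter generated by $\mathfrak{p}\cup\{\mathcal{V}_{i_0}\}$ lies in $\mathcal{S}$: for that you need $s|_{\mathcal{U}_0\cap\mathcal{V}'\cap\mathcal{V}_{i_0}}\neq 0$ for \emph{every} $\mathcal{V}'\in\mathfrak{p}$, while the separation axiom applied to the cover $\{\mathcal{U}_0\cap\mathcal{V}\cap\mathcal{V}'\cap\mathcal{V}_i\}_i$ only yields an index depending on $\mathcal{V}'$. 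Equivalently, if no $\mathcal{V}_i$ can be adjoined, maximality gives for each $i$ some $\mathcal{V}'_i\in\mathfrak{p}$ with $s|_{\mathcal{U}_0\cap\mathcal{V}_i\cap\mathcal{V}'_i}=0$; to contradict $\mathfrak{p}\in\mathcal{S}$ you would intersect all the $\mathcal{V}'_i$ and apply separation on $\mathcal{U}_0\cap\mathcal{V}\cap\bigcap_i\mathcal{V}'_i$, which is legitimate only when the covering is finite. Admissible coverings may be infinite, so the Zorn construction does not close as written.

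What is missing is precisely the non-formal input from rigid geometry: the quasi-compactness properties of the $G$-topology (admissible coverings of affinoid, or more generally quasi-compact, admissible opens admit finite refinements by affinoid subdomains), together with arranging the filter to have a cofinal system of quasi-compact members so that primality for arbitrary admissible coverings reduces to the finite case, where your intersection trick works. This is the substance of the theorem of van der Put and Schneider \cite{vdPS} that $\Sh(\mathcal{X})$ is equivalent to the category of sheaves on the honest topological space $P'(\mathcal{X})$, from which exactness and conservativity of $\xi^*$ follow by the usual stalkwise arguments; the paper does not reprove this but quotes it via \cite{CCM} Lemma 3.8. Some such input is unavoidable: for a general site the statement that a sheaf with vanishing stalks at all prime filters is zero can fail (not every locale has enough points), so the filter axioms plus the sheaf condition alone cannot suffice. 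With the finite-refinement property and the reduction to quasi-compact members your scheme can be made to work, essentially recovering the argument of \cite{vdPS}, but as it stands the conservativity proof has a genuine gap.
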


Since every dagger space $\mathcal{Y}$ is homeomorphic to its completion $\hat{\mathcal{Y}}$ as Grothendieck topological spaces, by Proposition \ref{godement1} and Proposition \ref{godement3} we can define the Godement resolution $\Gd_{\an}\mathcal{F}$ of $\mathcal{F}\in\Sh(\mathcal{Y})$ as $\Gd_{Pt(\hat{\mathcal{Y}})}\mathcal{F}$.

\subsection{Log rigid complexes}
Contents of this subsection are almost quotations from the first section of \cite{Grosse1} with some arranging notations. In the following, we use generalized Godement resolution $\Gd_{\an}$ to define log rigid complexes. We refer \cite{Me}, \cite{Grosse0} and \cite{LM} for weak formal schemes and dagger spaces.

Let $\widetilde{S}$ be the log scheme $\Spec W[t]$ with the log structure associated to $\cN\rightarrow W[t]:1\mapsto t$, $S$ be the exact closed log subscheme of $\widetilde{S}$ defined by $(p)$, $S_0$ be the exact closed log subscheme of $S$ defined by $(t)$. Let $\mathfrak{S}$ be the weak completion of $\widetilde{S}$. Let $\mathfrak{S}_W$ and $\mathfrak{S}_V$ be the exact closed weak formal subschemes of $\mathfrak{S}$ defined by $t\mapsto 0$ and $t\mapsto\pi$.

For a fine $S_0$-log scheme $(Y,\mathcal{N})$ and a choice of an open covering $\{U_i\}_{i\in I}$ of $Y$ and exact closed immersions $(U_i,\mathcal{N})\hookrightarrow(\mathfrak{P}_i,\mathcal{N}_{\mathfrak{P}_i})$ into smooth weak formal $\mathfrak{S}_W$-(resp. $\mathfrak{S}_V$-)log schemes, Gro\ss e-Kl\"{o}nne defined a bounded complex of $K_0$-(resp. $K$-) vector spaces.
We call them the log rigid complexes of $(Y,\mathcal{N})$ over $K_0$ (resp. over $K$), and denote by $\RG_0((Y,\mathcal{N});\{(\mathfrak{P}_i,\mathcal{N}_{\mathfrak{P}_i})\}_{i\in I})$ (resp. $\RG_K((Y,\mathcal{N});\{(\mathfrak{P}_i,\mathcal{N}_{\mathfrak{P}_i})\}_{i\in I})$).

For a morphism $f:(Y',\mathcal{N'})\rightarrow(Y,\mathcal{N})$ of fine $S_0$-log schemes and a choice of $\{(\mathfrak{P}_i,\mathcal{N}_{\mathfrak{P}_i})\}_{i\in I}$ for $(Y,\mathcal{N})$ and $\{(\mathfrak{P}'_j,\mathcal{N}_{\mathfrak{P}'_j})\}_{j\in J}$ for $(Y',\mathcal{N}')$ as above, he also defined a complex $\RG_?(f;\{(\mathfrak{P}_i,\mathcal{N}_{\mathfrak{P}_i})\}_{i\in I},\{(\mathfrak{P}'_j,\mathcal{N}_{\mathfrak{P}'_j})\}_{j\in J})$ with maps
\begin{eqnarray*}
\RG_?((Y',\mathcal{N}');\{(\mathfrak{P}'_j,\mathcal{N}_{\mathfrak{P}'_j})\}_{j\in J})&\rightarrow&\RG_?(f;\{(\mathfrak{P}_i,\mathcal{N}_{\mathfrak{P}_i})\}_{i\in I},\{(\mathfrak{P}'_j,\mathcal{N}_{\mathfrak{P}'_j})\}_{j\in J})\\
&\leftarrow&\RG_?((Y,\mathcal{N});\{(\mathfrak{P}_i,\mathcal{N}_{\mathfrak{P}_i})\}_{i\in I})
\end{eqnarray*}
here $?$ is $0$ or $K$, and the first map is quasi-isomorphism. This gives the functoriality of log rigid complexes in the derived category.

A log scheme with boundary is a morphism $\iota:(Y,\mathcal{N}_Y)\hookrightarrow(\overline{Y},\mathcal{N}_{\overline{Y}})$ such that the underlying morphism of schemes is a schematically dense open immersion, $\mathcal{N}_{\overline{Y}}\rightarrow\iota_*\mathcal{N}_Y$ is injective, $\iota^*\mathcal{N}_{\overline{Y}}=\mathcal{N}_Y$, and $(\iota_*\mathcal{N}_Y)^{\mathrm{gp}}=\mathcal{N}_{\overline{Y}}^{\mathrm{gp}}$.
We refer \cite{Grosse3} for details on log schemes with boundary.
For an $S$-log scheme with boundary $(Y,\mathcal{N}_Y)\hookrightarrow(\overline{Y},\mathcal{N}_{\overline{N}})$ and a choice of an open covering $\{\overline{U}_i\}_{i\in I}$ of $\overline{Y}$ and boundary exact closed immersions $((U_i,\mathcal{N}_Y)\hookrightarrow(\overline{U}_i,\mathcal{N}_{\overline{Y}}))\rightarrow((\mathcal{P}_i,\mathcal{N}_{\mathcal{P}_i})\hookrightarrow(\overline{\mathcal{P}_i},\mathcal{N}_{\overline{\mathcal{P}_i}}))$, Gro\ss e-Kl\"{o}nne defined a bounded complex of $K_0$-vector spaces. We call it the log rigid complex of $(Y,\mathcal{N}_Y)\hookrightarrow(\overline{Y},\mathcal{N}_{\overline{N}})$, and denote by $\RG((Y,\mathcal{N}_Y)\hookrightarrow(\overline{Y},\mathcal{N}_{\overline{N}});\{(\mathcal{P}_i,\mathcal{N}_{\mathcal{P}_i})\hookrightarrow(\overline{\mathcal{P}_i},\mathcal{N}_{\overline{\mathcal{P}_i}})\}_{i\in I})$.

More generally, he also defined the log rigid complexes of a simplicial fine $S_0$-log scheme and of a simplicial $S$-log schemes with boundary.

\subsection{Frobenius and monodromy operator}
We use the definition of monodromy operators on log rigid cohomology due to Gro\ss e-Kl\"{o}nne \cite{Grosse1}.

\begin{definition}[strictly semistable scheme]
A $V$-scheme $\mathcal{X}$ is strictly semistable if Zariski locally it is \'{e}tale over $\Spec V[T_1,\ldots,T_n]/(T_1\cdots T_r-\pi)$ for some $0\leq r\leq n$.
\end{definition}

From now on, we use the following notation; $\mathcal{X}$ is a strictly semistable scheme over $V$ with the generic fiber $X$ and the special fiber $Y$. Let $\mathcal{N}_{\mathcal{X}}$ be the log structure on $\mathcal{X}$ defined from $Y$, and $\mathcal{N}_Y$ be its pull-back to $Y$. $(\mathfrak{X},\mathcal{N}_{\mathfrak{X}})$ is the weak completion of $(\mathcal{X},\mathcal{N}_{\mathcal{X}})$.

\begin{definition}[admissible lifting]
 An admissible lifting of $\mathcal{X}$ is a weak formal log scheme $(\mathfrak{Z},\mathcal{N}_{\mathfrak{Z}})$ over $\mathfrak{S}$ together with an isomorphism $(\mathfrak{X},\mathcal{N}_{\mathfrak{X}})\cong(\mathfrak{Z},\mathcal{N}_{\mathfrak{Z}})\times_{\mathfrak{S}}\mathfrak{S}_V$ and an endomorphism $\phi$ on $\mathfrak{Z}$ satisfying the following conditions:
\begin{enumerate}
\item The underlying weak formal scheme $\mathfrak{Z}$ is smooth over $\Spwf W$, flat over $\Spwf W[t]^{\dagger}$.
\item $Z=\mathfrak{Z}\times_{\Spwf W[t]^{\dagger}}\Spec k[t]$ is generically smooth over $\Spec k[t]$.
\item $\mathfrak{Y}=\mathfrak{Z}\times_{\Spwf W[t]^{\dagger}}\Spwf W$ is a normal crossing divisor on $\mathfrak{Z}$, and $\mathcal{N}_{\mathfrak{Z}}$ is the log structure defined from this.
\item $\phi$ is a lift of $p$-th power Frobenius on $Z$, is compatible with $\sigma$ on $\Spwf W[t]^{\dagger}$, and sends equations for the $W$-flat irreducible components to their $p$-th powers.
\end{enumerate}
\end{definition}

We follow the construction of the log rigid cohomology by Gro\ss e-Kl\"{o}nne in \cite{Grosse2}. Note that admissible liftings locally exist. Denote the irreducible components of $Y$ by $\{Y_i\}_{i\in I}$. Let $\{\mathcal{U}_h\}_{h\in H}$ be an open covering of $\mathcal{X}$ such that there exist admissible liftings $\mathfrak{Z}_h$ of $\mathcal{U}_h$ for any $h\in H$. For $h\in H$ and $i\in I$ we let $\mathfrak{Y}_{h,i}$ be the unique $W$-flat component of $\mathfrak{Y}_h$ which lifts $Y_i\cap\mathcal{U}_{h,k}$; if $Y_i\cap\mathcal{U}_{h,k}$ is empty we let $\mathfrak{Y}_{h,i}$ be the empty weak formal scheme. For $m\geq 0$ and $\alpha=(h_0,\ldots,h_m)\in H^{m+1}$, let $\mathcal{U}_{\alpha}=\bigcap_{r=0}^m\mathcal{U}_{h_r}$, and let $\mathfrak{Z}''_{\alpha}$ be the blowing up of $\prod_{r=0}^m\mathfrak{Z}_{h_r}$ along $\sum_{i\in I}\prod_{r=0}^m\mathfrak{Y}_{h_r,i}$ (products are taken over $\Spwf W$). Let $\mathfrak{Z}'_{\alpha}$ be the complement of the strict transforms of all
\[\mathfrak{Y}_{h_r,i}\times\prod_{0\leq r'\leq m,~r'\neq r}\mathfrak{Z}_{h_{r'}}\]
in $\mathfrak{Z}''_{\alpha}$, and $\mathfrak{Y}'_{\alpha}$ be its exceptional divisor. Let $\mathfrak{T}_{\alpha}$ be the blowing up of $\prod_{r=0}^m\Spwf W[t]^{\dagger}$ along $\prod_{r=0}^m\Spwf W$. The diagonal embedding $\Spwf W[t]^{\dagger}\rightarrow\prod_{r=0}^m\Spwf W[t]^{\dagger}$ lifts to an embedding $\Spwf W[t]^{\dagger}\rightarrow\mathfrak{T}_{\alpha}$. Then there exists a natural morphism $\mathfrak{Z}'_{\alpha}\rightarrow\mathfrak{T}_{\alpha}$, and $\mathfrak{Z}_{\alpha}=\mathfrak{Z}'_{\alpha}\times_{\mathfrak{T}_{\alpha}}\Spwf W[t]^{\dagger}$ is smooth over $W$, and has a relative normal crossing divisor $\mathfrak{Y}_{\alpha}=\mathfrak{Y}'_{\alpha}\times_{\mathfrak{T}_{\alpha}}\Spwf W$. Let $\mathcal{N}_{\mathfrak{Z}_{\alpha}}$ be the log structure on $\mathfrak{Z}_{\alpha}$ defined by $\mathfrak{Y}_{\alpha}$. Denote $\widetilde{\omega}^{\bullet}_{\mathfrak{Z}_{\alpha}}$ the logarithmic de Rham complex of $(\mathfrak{Z}_{\alpha},\mathcal{N}_{\mathfrak{Z}_{\alpha}})$ over $(\Spwf W,\mathrm{triv.})$. Let
\begin{eqnarray*}
\widetilde{\omega}^{\bullet}_{\mathfrak{Y}_{\alpha}}&=&\widetilde{\omega}^{\bullet}_{\mathfrak{Z}_{\alpha}}\otimes\mathcal{O}_{\mathfrak{Y}_{\alpha}}\\
\omega^{\bullet}_{\mathfrak{Y}_{\alpha}}&=&\widetilde{\omega}^{\bullet}_{\mathfrak{Y}_{\alpha}}/(\widetilde{\omega}^{\bullet-1}_{\mathfrak{Y}_{\alpha}}\wedge d\log t)\\
P_j\widetilde{\omega}^{\bullet}_{\mathfrak{Z}_{\alpha}}&=&\mathrm{Im}(\widetilde{\omega}^j_{\mathfrak{Z}_{\alpha}}\otimes\Omega^{\bullet-j}_{\mathfrak{Z}_{\alpha}}\rightarrow\widetilde{\omega}^{\bullet}_{\mathfrak{Z}_{\alpha}})\\
P_j\widetilde{\omega}^{\bullet}_{\mathfrak{Y}_{\alpha}}&=&P_j\widetilde{\omega}^{\bullet}_{\mathfrak{Z}_{\alpha}}/(\widetilde{\omega}^{\bullet}_{\mathfrak{Z}_{\alpha}}\otimes\mathcal{J}_{\mathfrak{Y}_{\alpha}})
\end{eqnarray*}
where $\Omega^{\bullet}_{\mathfrak{Z}_{\alpha}}$ is the non-logarithmic de Rham complex of $\mathfrak{Z}_{\alpha}$ over $\Spwf W$, $\mathcal{J}_{\mathfrak{Y}_{\alpha}}$ is the ideal sheaf of $\mathfrak{Y}_{\alpha}$ in $\mathfrak{Z}_{\alpha}$. 
The Steenbrink double complex $A^{\bullet,\bullet}$ on $\mathfrak{Y}_{\alpha,\cQ}$ is defined by
\[A^{i,j}_{\alpha}=\widetilde{\omega}^{i+j+1}_{\mathfrak{Y}_{\alpha,\cQ}}/P_j\widetilde{\omega}^{i+j+1}_{\mathfrak{Y}_{\alpha,\cQ}},\]
the vertical differentials $A^{i,j}_{\alpha}\rightarrow A_{\alpha}^{i+1,j}$ are induced by $(-1)^jd:\widetilde{\omega}^{i+j+1}_{\mathfrak{Y}_{\alpha}}\rightarrow\widetilde{\omega}^{i+j+2}_{\mathfrak{Y}_{\alpha}}$, and the horizontal differentials $A^{i,j}_{\alpha}\rightarrow A_{\alpha}^{i,j+1}$ are induced by $\omega\mapsto\omega\wedge d\log t$.
Let $A^{\bullet}_{\alpha}$ be the associated total complex. Let 
\begin{eqnarray*}
\mathcal{U}_m&=&\coprod_{\alpha\in H^{m+1}}\mathcal{U}_{\alpha}\\
\mathfrak{Y}_{m,\cQ}&=&\coprod_{\alpha\in H^{m+1}}\mathfrak{Y}_{\alpha,\cQ}
\end{eqnarray*}
and $A^{\bullet}_m=\coprod A^{\bullet}_{\alpha}$ be a complex of sheaves on $\mathfrak{Y}_{m,\cQ}$.

We define $\RG_{\rig}(\mathcal{X};\{\mathfrak{Z}_h\}_{h\in H})$ to be the total complex of $\Gamma(]\mathcal{U}_{\bullet,k}[_{\mathfrak{Y}_{\bullet}},\Gd_{\an}A^{\bullet}_{\bullet})$.
The diagonal actions of $\phi$ on $\mathfrak{Z}_G$ induce the Frobenius operator $\phi$ on $\RG_{\rig}(\mathcal{X};\{\mathfrak{Z}_h\}_{h\in H})$ which is a $\sigma$-semilinear bijection. The monodromy operator $N$ on $\RG_{\rig}(\mathcal{X};\{\mathfrak{Z}_h\}_{h\in H})$ is induced by $(-1)^{j+1}$ times the natural projections $A^{i,j}_{\alpha}\rightarrow A^{i-1,j+1}_{\alpha}$, and it is nilpotent. These operators satisfy the relation $N\phi=p\phi N$. In particular, $\RG_{\rig}(\mathcal{X},\{\mathfrak{Z}_h\}_{h\in H})$ is an object in $C^b_{\rig,K_0}$.

The natural map $\widetilde{\omega}^{\bullet}_{\mathfrak{Y}_{\alpha,\cQ}}\rightarrow A_{\alpha}^{\bullet,0}:\eta\mapsto\eta\wedge d\log t$ induces a quasi-isomorphisms $\omega^{\bullet}_{\mathfrak{Y}_{\alpha,\cQ}}\rightarrow A_{\alpha}^{\bullet}$ and
\begin{equation}\label{rig0}
\RG_0((Y,\mathcal{N}_Y),\{(\mathfrak{Y}_h,\mathcal{N}_{\mathfrak{Y}_h})\}_{h\in H})\rightarrow\RG_{\rig}(\mathcal{X},\{\mathfrak{Z}_h\}_{h\in H})
\end{equation}

\subsection{Functoriality; construction of maps}
Let $\mathcal{X}$ and $\mathcal{X}'$ be strictly semistable schemes over $V$, $f:\mathcal{X}'\rightarrow\mathcal{X}$ be a $V$-morphism. Denote the irreducible components of $Y$ and $Y'$ by $\{Y_i\}_{i\in I}$ and $\{Y'_j\}_{j\in J}$ respectively. Let
\[J_i=\{j\in J\mid f(Y'_j)\subset Y_i\},~I_0=\{i\in I\mid J_i\neq\emptyset\}.\]
Then we have $J=\coprod_{i\in I_0}J_i$.
Take an open covering $\{\mathcal{U}_h\}_{h\in H}$ of $\mathcal{X}$ with admissible liftings $\{\mathfrak{Z}_h\}_{h\in H}$ and an open covering $\{\mathcal{U}'_g\}_{g\in G}$ of $\mathcal{X}'$ with admissible liftings $\{\mathfrak{Z}'_g\}_{g\in G}$. Let $L=G\times H$.
For $\ell=(g,h)\in L$, we write $\mathcal{U}^{\times}_{\ell}=\mathcal{U}'_g\cap f^{-1}(\mathcal{U}_h),~\mathfrak{Z}_{\ell}=\mathfrak{Z}_h$, and $\mathfrak{Z}'_{\ell}=\mathfrak{Z}'_{g}$.
For $i\in I$, $j\in J$, and $\ell=(g,h)\in L$, let $\mathfrak{Y}_{\ell,i}$ and $\mathfrak{Y}'_{\ell,j}$ be the unique $W$-flat components of $\mathfrak{Y}_{\ell}$ and $\mathfrak{Y}'_{\ell}$ which lift $Y_i\cap \mathcal{U}_{h,k}$ and $Y'_j\cap \mathcal{U}_{g,k}$ respectively; if $Y_i\cap \mathcal{U}_{h,k}$ is empty we let $\mathfrak{Y}_{\ell,i}$ be the empty weak formal scheme.

For $m\geq 0$ and $\alpha=(\ell_0,\ldots,\ell_m)\in L^{m+1}$, let $\mathcal{J}_{\alpha}$ be the ideal of
\[\sum_{i\in I_0,~j\in J_i}\prod_{r=0}^m(\mathfrak{Y}_{\ell_r,i}\times\mathfrak{Y}'_{\ell_r,j})\subset\prod_{r=0}^m(\mathfrak{Z}_{\ell_r}\times\mathfrak{Z}'_{\ell_r}),\]
and $\mathfrak{P}''_{\alpha}$ be the blowing-up along $\mathcal{J}_{\alpha}$.
Let $\mathcal{I}_{\alpha}$ be the ideal of
\[\prod_{r=0}^m(\Spwf W\times\Spwf W)\subset\prod_{r=0}^m(\Spwf W[t_{\ell_r}]^{\dagger}\times\Spwf W[s_{\ell_r}]^{\dagger}),\]
and $\mathfrak{T}_{\alpha}$ be the blowing-up along $\mathcal{I}_{\alpha}$, here $t_{\ell_r}$'s and $s_{\ell_r}$'s are indeterminates.
Then the diagonal embedding $\Spwf W[t]^{\dagger}\rightarrow\prod_{r=0}^m(\Spwf W[t_{\ell_r}]^{\dagger}\times\Spwf W[s_{\ell_r}]^{\dagger})$ lifts to $\Spwf W[t]^{\dagger}\rightarrow\mathfrak{T}_{\alpha}$.
Denote
\[q_{\alpha}:\prod_{r=0}^m(\mathfrak{Z}_{\ell_r}\times\mathfrak{Z}'_{\ell_r})\rightarrow\prod_{r=0}^m(\Spwf W[t_{\ell_r}]^{\dagger}\times\Spwf W[s_{\ell_r}]^{\dagger})\]
the natural morphism, and let
\[\theta_{\alpha}:\bigoplus_{n\geq 0}q_{\alpha}^{-1}(\mathcal{I}_{\alpha})^n\rightarrow\bigoplus_{n\geq 0}\mathcal{J}_{\alpha}^n\]
be the induced homomorphism of graded rings. Let $\mathbb{V}(\theta_{\alpha}(t_{\ell_r}),\theta_{\alpha}(s_{\ell_r}))_{r=0}^m$ be the closed weak formal subscheme of $\mathfrak{P}''_{\alpha}$ defined by all images of degree-$1$ elements $t_{\ell_r}$ and $s_{\ell_r}$ by $\theta_{\alpha}$.
Let $\mathfrak{P}'_{\alpha}$ be the complement of $\mathbb{V}(\theta_{\alpha}(t_{\ell_r}),\theta_{\alpha}(s_{\ell_r}))_{r=0}^m$ and the strict transforms of
\begin{eqnarray*}
(\mathfrak{Y}_{\ell_r,i}\times\mathfrak{Z}'_{\ell_r})\times\prod_{0\leq r'\leq m,~r'\neq r}(\mathfrak{Z}_{\ell_{r'}}\times\mathfrak{Z}'_{\ell_{r'}})\\
(\mathfrak{Z}_{\ell_r}\times\mathfrak{Y}'_{\ell_r,j})\times\prod_{0\leq r'\leq m,~r'\neq r}(\mathfrak{Z}_{\ell_{r'}}\times\mathfrak{Z}'_{\ell_{r'}})
\end{eqnarray*}
for all $i\in I$ and $j\in J$ in $\mathfrak{P}''_{\alpha}$. 
Then we have a natural morphism $\mathfrak{P}'_{\alpha}\rightarrow\mathfrak{T}_{\alpha}$.
Let $\mathfrak{P}_{\alpha}=\mathfrak{P}'_{\alpha}\times_{\mathfrak{T}_{\alpha}}\Spwf W[t]^{\dagger}$. Then the exceptional divisor $\mathfrak{Q}_{\alpha}$ is a normal crossing divisor on $\mathfrak{P}_{\alpha}$.
As the construction of $\mathfrak{Y}_{\bullet,\cQ}$ and $A_{\bullet}^{\bullet}$ in last subsection, we can define a simplicial dagger space $\mathfrak{Q}_{\bullet,\cQ}$ and complexes $B_{\bullet}^{\bullet}$ of sheaves. We define $\RG_{\rig}(f;\{\mathfrak{Z}_h\}_{h\in H},\{\mathfrak{Z}'_g\}_{g\in G})$ to be the total complex of $\Gamma(]\mathcal{U}^{\times}_{\bullet,k}[_{\mathfrak{Q}_{\bullet}},\Gd_{\an}B^{\bullet}_{\bullet})$, then this is an object in $C^b_{\rig,K_0}$.
Moreover we have natural maps 
\begin{equation}\label{functorial}
\RG_{\rig}(\mathcal{X},\{\mathfrak{Z}_h\}_{h\in H})\rightarrow\RG_{\rig}(f;\{\mathfrak{Z}_{h}\}_{h\in H},\{\mathfrak{Z}'_g\}_{g\in G})\xleftarrow[]{(*)}\RG_{\rig}(\mathcal{X}';\{\mathfrak{Z}'_g\}_{g\in G})
\end{equation}
in $C^b_{\rig,K_0}$.

\begin{lemma}\label{funclemma}
The map $(*)$ in \eqref{functorial} is a quasi-isomorphism.
\end{lemma}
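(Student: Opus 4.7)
The plan is to reduce the claim to Grosse-Klönne's comparison theorem for log rigid cohomology of fine $S_0$-log schemes, by passing through the Steenbrink/log de Rham quasi-isomorphism used in \eqref{rig0}. The key observation is that the system $\{(\mathfrak{P}_\alpha,\mathfrak{Q}_\alpha)\}_\alpha$ constructed above provides an alternative family of smooth weak formal embeddings of (the open subsets $\mathcal{U}^\times_{\alpha,k}$ of) the log scheme $(Y',\mathcal{N}_{Y'})$ over $\mathfrak{S}$, related to the standard family $\{(\mathfrak{Z}'_\alpha,\mathfrak{Y}'_\alpha)\}_\alpha$ built from $\{\mathfrak{Z}'_g\}_g$ alone, via a natural projection induced by $\mathrm{pr}_2:\prod_r(\mathfrak{Z}_{\ell_r}\times\mathfrak{Z}'_{\ell_r})\to\prod_r\mathfrak{Z}'_{\ell_r}$.

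First I would construct, for each multi-index $\alpha$, a morphism $\pi_\alpha:(\mathfrak{P}_\alpha,\mathcal{N}_{\mathfrak{P}_\alpha})\to(\mathfrak{Z}'_\alpha,\mathcal{N}_{\mathfrak{Z}'_\alpha})$ by checking that $\mathrm{pr}_2$ descends through the two successive blow-ups (the inclusion of centers $\sum_{i\in I_0,j\in J_i}\prod_r(\mathfrak{Y}_{\ell_r,i}\times\mathfrak{Y}'_{\ell_r,j})\subset\prod_r(\mathfrak{Z}_{\ell_r}\times\mathfrak{Z}'_{\ell_r})$ pulls back the center $\sum_{j\in J}\prod_r\mathfrak{Y}'_{\ell_r,j}$ used for $\mathfrak{Z}'_\alpha$), through the removal of the strict transforms of the stated divisors, and through the base change via $\Spwf W[t]^\dagger\to\mathfrak{T}_\alpha$. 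By construction $\pi_\alpha$ carries the exceptional divisor $\mathfrak{Q}_\alpha$ onto $\mathfrak{Y}'_\alpha$ and intertwines the relevant log structures; the map $(*)$ is then nothing but pullback of forms under $\pi_\alpha$, assembled simplicially.

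Second, I would invoke the analog of the quasi-isomorphism producing \eqref{rig0}: the morphism $\omega^\bullet_{\mathfrak{Q}_{\alpha,\cQ}}\to B^\bullet_\alpha$, $\eta\mapsto\eta\wedge d\log t$, is a quasi-isomorphism by the same local computation. After applying $\Gd_{\an}$ and taking the total simplicial complex, this yields a quasi-isomorphism
\[
\RG_0\bigl((Y',\mathcal{N}_{Y'})\cap\mathcal{U}^\times;\{(\mathfrak{Q}_\ell\cap\{t=0\},\mathcal{N})\}_\ell\bigr)\xrightarrow{\sim}\RG_{\rig}(f;\{\mathfrak{Z}_h\},\{\mathfrak{Z}'_g\}),
\]
parallel to the quasi-isomorphism $\RG_0((Y',\mathcal{N}_{Y'})\cap\mathcal{U}^\times;\{(\mathfrak{Y}'_g,\mathcal{N}_{\mathfrak{Y}'_g})\})\xrightarrow{\sim}\RG_{\rig}(\mathcal{X}';\{\mathfrak{Z}'_g\})$ given by \eqref{rig0} itself. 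The restrictions $\pi_\alpha|_{t=0}$ give a natural map between the two $\RG_0$'s, and I would verify that the resulting square commutes on the level of sheaves.

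Finally I would conclude by quoting Grosse-Klönne's functoriality of $\RG_0$ for the identity morphism of $(Y',\mathcal{N}_{Y'})\cap\mathcal{U}^\times$ with the two different systems of admissible embeddings $\{(\mathfrak{Y}'_g,\mathcal{N})\}$ and $\{(\mathfrak{Q}_\ell\cap\{t=0\},\mathcal{N})\}$: the comparison map between the corresponding $\RG_0$'s is a quasi-isomorphism (this is precisely the first arrow in his functoriality construction recalled above). Chasing the commutative square then forces $(*)$ to be a quasi-isomorphism. The main obstacle I anticipate is step one, namely the careful bookkeeping needed to construct $\pi_\alpha$ at the level of weak formal geometry—tracking strict transforms, exceptional divisors, the base change to $\Spwf W[t]^\dagger$, and ensuring that the resulting morphism of log structures is compatible with the Steenbrink filtration so that the commutativity of the final square is not merely derived but can be established on the nose.
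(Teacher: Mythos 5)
Your reduction step is the same as the paper's: the proof in the paper also passes through the commutative square \eqref{relpolyproof}, whose vertical arrows are the wedge-with-$d\log t$ maps of Steenbrink type (the analogues of \eqref{rig0}), so that everything comes down to showing that the induced map on the $\omega^{\bullet}$-level, i.e.\ the morphism of tubes $]\mathcal{U}^{\times}_{\bullet,k}[_{\mathfrak{Q}_{\bullet}}\rightarrow\,]\mathcal{U}'_{\bullet,k}[_{\mathfrak{Y}'_{\bullet}}$, induces a quasi-isomorphism. Where you diverge is in how this last point is settled: the paper does not quote Gro\ss e-Kl\"{o}nne's comparison machinery, but proves directly, by exhibiting explicit \'etale-local models $\mathfrak{R}_{\alpha}$ and $\mathfrak{V}'_{\alpha}$ with $\mathfrak{R}_{\alpha}\rightarrow\mathfrak{V}'_{\alpha}$ smooth, that the map of tubes is a relative open polydisk, whence the quasi-isomorphism.

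The gap in your plan lies in the final citation. The simplicial system $\{\mathcal{U}^{\times}_{\alpha,k}\hookrightarrow\mathfrak{Q}_{\alpha}\}_{\alpha}$ is not one of Gro\ss e-Kl\"{o}nne's canonical embedding data for $(Y',\mathcal{N}_{Y'})$: the $\mathfrak{Q}_{\alpha}$ arise from blow-ups of the mixed products $\prod_r(\mathfrak{Z}_{\ell_r}\times\mathfrak{Z}'_{\ell_r})$ along centers pairing $\mathfrak{Y}_{\ell_r,i}$ with $\mathfrak{Y}'_{\ell_r,j}$ for $j\in J_i$, and the simplicial levels are not his exactified products of the level-zero data. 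To invoke his independence/functoriality theorem for $\RG_0$ you must first verify that these data are admissible in his sense (log smoothness of $\mathfrak{Q}_{\alpha}$ with its log structure over $\mathfrak{S}_W$, and that the immersions are of the allowed type), and furthermore that the specific pullback map along your $\pi_{\alpha}$ is compatible with his comparison zig-zag --- his functoriality statement by itself gives an abstract quasi-isomorphism between the two complexes, not that the particular arrow in the square is one. Both verifications reduce to exactly the local coordinate analysis that constitutes the paper's proof, namely the description of $\mathfrak{Q}_{\alpha}$ and $\mathfrak{Y}'_{\alpha}$ as \'etale over $\mathfrak{R}_{\alpha}$ and $\mathfrak{V}'_{\alpha}$. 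Note also that you locate the main difficulty in the wrong place: constructing the projections is routine, since they are precisely how the arrows in \eqref{functorial} are defined; the substantive content is the local structure of $\mathfrak{Q}_{\alpha}$ (together with the minor bookkeeping of the index refinement $L=G\times H$ versus $G$). Without that local analysis your argument reduces the lemma to an unchecked hypothesis rather than proving it.
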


\begin{proof}
Since the following diagram \eqref{relpolyproof} is commutative and vertical arrows are quasi-isomorphic, it suffices to show that $]\mathcal{U}^{\times}_{\bullet,k}[_{\mathfrak{Q}_{\bullet}}\rightarrow]\mathcal{U}'_{\bullet,k}[_{\mathfrak{Y}'_{\bullet}}$ is a relative open polydisk.
\begin{equation}\label{relpolyproof}\xymatrix{
\Gamma(]\mathcal{U}'_{\bullet,k}[_{\mathfrak{Y}'_{\bullet}},\Gd_{\an}\omega^{\bullet}_{\mathfrak{Y}'_{\bullet,\cQ}})\ar[d]\ar[r]&\Gamma(]\mathcal{U}^{\times}_{\bullet,k}[_{\mathfrak{Q}_{\bullet}},\Gd_{\an}\omega^{\bullet}_{\mathfrak{Q}_{\bullet,\cQ}})\ar[d]\\
\RG_{\rig}(\mathcal{X}',\{\mathfrak{Z}'_{g}\}_{g\in G})\ar[r]^<<<<<<<{(*)}&\RG_{\rig}(f;\{\mathfrak{Z}_h\}_{h\in H},\{\mathfrak{Z}'_g\}_{g\in G})
}\end{equation}
Since the statement is local, we may assume that there are finite sets $\bar{I}$ and $\bar{J}$ which contain $I$ and $J$ respectively, and \'{e}tale morphisms
\[\mathfrak{Z}_{\ell}\rightarrow\Spwf W[T_{\ell,i}]^{\dagger}_{i\in\bar{I}},\hspace{10pt}\mathfrak{Z}'_{\ell}\rightarrow\Spwf W[S_{\ell,j}]^{\dagger}_{j\in\bar{J}}\]
such that $\mathfrak{Y}_{\ell,i}$ and $\mathfrak{Y}'_{\ell,j}$ are defined by $T_{\ell,i}$ and $S_{\ell,j}$ respectively for all $\ell\in L$, $i\in I$, and $j\in J$. Then, for fixed any $0\leq q\leq m$, $i_0\in I_0$, and $j_0\in J$, $\mathfrak{Q}_{\alpha}$ and $\mathfrak{Y}'_{\alpha}$ are \'{e}tale over
\[\mathfrak{R}_{\alpha}=\Spwf\frac{W[T_{\ell_r,a}^{\pm 1},T_{\ell_r,a'},S_{\ell_q,b},S_{\ell_r,b'},(\frac{S_{\ell_{r'},c}}{S_{\ell_q,c}})^{\pm 1},(\frac{\prod_{j\in J_d}S_{\ell_q,j}}{T_{\ell_q,d}})^{\pm 1}]^{\dagger}{\tiny\begin{array}{l}0\leq r\leq m,~0\leq r'\leq m,~r\neq r'\\ a\in I\setminus I_0,~a'\in\bar{I}\setminus I\\ b\in J,~b'\in\bar{J}\setminus J\\ c\in J\setminus\{j_0\},~d\in I_0\setminus\{i_0\}\end{array}}}{(\prod_{j\in J}S_{\ell_q,j})}\]
and
\[\mathfrak{V}'_{\alpha}=\Spwf\frac{W[S_{\ell_q,b},S_{\ell_r,b'},(\frac{S_{\ell_{r'},c}}{S_{\ell_q,c}})^{\pm 1}]^{\dagger}{\tiny\begin{array}{l}0\leq r\leq m,~0\leq r'\leq m,~r\neq r'\\ b\in J,~b'\in\bar{J}\setminus J\\ c\in J\setminus\{j_0\}\end{array}}}{(\prod_{j\in J}S_{\ell_q,j})}\]
respectively.
Let
\[R=\Spec k[S_{\ell_q,j}]_{j\in J}/(\prod_{j\in J}S_{\ell_q,j}).\]
Then we have a commutative diagram
\[\xymatrix{
]\mathcal{U}^{\times}_{\bullet,k}[_{\mathfrak{Q}_{\bullet}}\ar[r]\ar[d]&]\mathcal{U}'_{\bullet,k}[_{\mathfrak{Y}'_{\bullet}}\ar[d]\\
]R[_{\mathfrak{R}_{\bullet}}\ar[r]&]R[_{\mathfrak{V}'_{\bullet}}
}\]
in which the vertical arrows are isomorphisms.
Since $\mathfrak{R}_{\alpha}\rightarrow\mathfrak{V}'_{\alpha}$ is smooth, $]R[_{\mathfrak{R}_{\alpha}}\rightarrow]R[_{\mathfrak{V}'_{\alpha}}$ is a relative open polydisk.
\end{proof}

\subsection{Functoriality; composition}
Next we consider about the composition.
Let $\mathcal{X}''\xrightarrow[]{g}\mathcal{X}'\xrightarrow[]{f}\mathcal{X}$ be the morphisms of strictly semistable schemes over $V$. 
Let $\{Y_i\}_{i\in I}$, $\{Y'_j\}_{j\in J}$, and $\{Y''_q\}_{q\in Q}$ be the irreducible components of $Y$, $Y'$, and $Y''$. Let
\begin{eqnarray*}J_i&=&\{j\in J\mid f(Y'_j)\subset Y_i\}\\
Q_i&=&\{q\in Q\mid f\circ g(Y''_q)\subset Y_i\}\\
Q_j&=&\{q\in Q\mid g(Y''_q)\subset Y'_j\}
\end{eqnarray*}
for $i\in I$ and $j\in J$, and
\begin{eqnarray*}
I_0&=&\{i\in I\mid J_i\neq 0\}\\
I_{00}&=&\{i\in I\mid Q_i\neq 0\}\\
J_0&=&\{j\in J\mid Q_j\neq 0\}\\
J_{i,0}&=&J_i\cap J_0.
\end{eqnarray*}

Take open coverings $\{\mathcal{U}_h\}_{h\in H}$, $\{\mathcal{U}'_g\}_{g\in G}$, and $\{\mathcal{U}''_e\}_{e\in E}$ of $\mathcal{X}$, $\mathcal{X}'$, and $\mathcal{X}''$ with admissible coverings $\{\mathfrak{Z}_h\}_{h\in H}$, $\{\mathfrak{Z}'_g\}_{g\in G}$, $\{\mathfrak{Z}''_e\}_{e\in E}$. Let
\[\mathcal{L}=E\times G\times H.\]
For $\ell=(e,g,h)\in\mathcal{L}$, we write $\mathfrak{Z}_{\ell}=\mathfrak{Z}_h$, $\mathfrak{Z}'_{\ell}=\mathfrak{Z}'_g$, and $\mathfrak{Z}''_{\ell}=\mathfrak{Z}''_e$.

For $m\geq 0$ and $\alpha=(\ell_0,\ldots,\ell_m)\in\mathcal{L}^{m+1}$, let $\widetilde{\mathcal{J}}_{\alpha}$ be the ideal of
\[\sum_{i\in I_{00},~j\in J_{i,0},~q\in Q_j}\prod_{r=0}^m(\mathfrak{Y}_{\ell_r,i}\times\mathfrak{Y}'_{\ell_r,j}\times\mathfrak{Y}''_{\ell_r,q})\subset\prod_{r=0}^m(\mathfrak{Z}_{\ell_r}\times\mathfrak{Z}'_{\ell_r}\times\mathfrak{Z}''_{\ell_r})\]
and $\widetilde{\mathfrak{P}}''_{\alpha}$ be the blowing-up along $\widetilde{\mathcal{J}}_{\alpha}$.
Let $\widetilde{\mathcal{I}}_{\alpha}$ be the ideal of
\[\prod_{r=0}^m(\Spwf W\times\Spwf W\times\Spwf W)\subset\prod_{r=0}^m(\Spwf W[t_{\ell_r}]^{\dagger}\times\Spwf W[s_{\ell_r}]^{\dagger}\times\Spwf W[u_{\ell_r}]^{\dagger}),\]
and $\widetilde{\mathfrak{T}}_{\alpha}$ be the blowing-up along $\widetilde{\mathcal{I}}_{\alpha}$, here $t_{\ell_r}$'s, $s_{\ell_r}$'s, and $u_{\ell_r}$'s are indeterminates. Then the diagonal embedding $\Spwf W[t]^{\dagger}\rightarrow\prod_{r=0}^m(\Spwf W[t_{\ell_r}]^{\dagger}\times\Spwf W[s_{\ell_r}]^{\dagger}\times\Spwf W[u_{\ell_r}]^{\dagger})$ lifts to $\Spwf W[t]^{\dagger}\rightarrow\widetilde{\mathfrak{T}}_{\alpha}$. Denote
\[\widetilde{q}_{\alpha}:\prod_{r=0}^m(\mathfrak{Z}_{\ell_r}\times\mathfrak{Z}'_{\ell_r}\times\mathfrak{Z}''_{\ell_r})\rightarrow\prod_{r=0}^m(\Spwf W[t_{\ell_r}]^{\dagger}\times\Spwf W[s_{\ell_r}]^{\dagger}\times\Spwf W[u_{\ell_r}]^{\dagger})\]
be the natural morphism, and let
\[\widetilde{\theta}_{\alpha}:\bigoplus_{n\geq 0}\widetilde{q}^{-1}_{\alpha}(\widetilde{\mathcal{I}}_{\alpha})^n\rightarrow\bigoplus_{n\geq 0}\widetilde{\mathcal{J}}_{\alpha}^n\]
be the induced homomorphism of graded rings. Let $\mathbb{V}(\widetilde{\theta}_{\alpha}(t_{\ell_r}),\widetilde{\theta}_{\alpha}(s_{\ell_r}),\widetilde{\theta}_{\alpha}(u_{\ell_r}))_{r=0}^m$ be the closed weak formal subscheme of $\widetilde{\mathfrak{P}}''_{\alpha}$ defined by all images of degree-$1$ elements $t_{\ell_r},s_{\ell_r}$, and $u_{\ell_r}$ by $\widetilde{\theta}_{\alpha}$.
Let $\widetilde{\mathfrak{P}}'_{\alpha}$ be the complement of $\mathbb{V}(\widetilde{\theta}_{\alpha}(t_{\ell_r}),\widetilde{\theta}_{\alpha}(s_{\ell_r}),\widetilde{\theta}_{\alpha}(u_{\ell_r}))_{r=0}^m$ and the strict transforms of
\begin{eqnarray*}
(\mathfrak{Y}_{\ell_r,i}\times\mathfrak{Z}'_{\ell_r}\times\mathfrak{Z}''_{\ell_r})\times\prod_{0\leq r'\leq m,~r'\neq r}(\mathfrak{Z}_{\ell_{r'}}\times\mathfrak{Z}'_{\ell_{r'}}\times\mathfrak{Z}''_{\ell_{r'}})\\
(\mathfrak{Z}_{\ell_r}\times\mathfrak{Y}'_{\ell_r,j}\times\mathfrak{Z}''_{\ell_r})\times\prod_{0\leq r'\leq m,~r'\neq r}(\mathfrak{Z}_{\ell_{r'}}\times\mathfrak{Z}'_{\ell_{r'}}\times\mathfrak{Z}''_{\ell_{r'}})\\
(\mathfrak{Z}_{\ell_r}\times\mathfrak{Z}'_{\ell_r}\times\mathfrak{Y}''_{\ell_r,q})\times\prod_{0\leq r'\leq m,~r'\neq r}(\mathfrak{Z}_{\ell_{r'}}\times\mathfrak{Z}'_{\ell_{r'}}\times\mathfrak{Z}''_{\ell_{r'}})
\end{eqnarray*}
for all $i\in I$, $j\in J$, and $q\in Q$ in $\widetilde{\mathfrak{P}}''_{\alpha}$. Then we have a natural morphism $\widetilde{\mathfrak{P}}'_{\alpha}\rightarrow\widetilde{\mathfrak{T}}_{\alpha}$.
Let $\widetilde{\mathfrak{P}}_{\alpha}=\widetilde{\mathfrak{P}}_{\alpha}\times_{\widetilde{\mathfrak{T}}_{\alpha}}\Spwf W[t]^{\dagger}$. Then the exceptional divisor $\widetilde{\mathfrak{Q}}_{\alpha}$ is a normal crossing divisor on $\widetilde{\mathfrak{P}}_{\alpha}$. We can define a simplicial dagger space $\widetilde{\mathfrak{Q}}_{\bullet,\cQ}$, complexes of sheaves $\widetilde{B}_{\bullet}^{\bullet}$, and an element $\RG_{\rig}(f,g;\{\mathfrak{Z}_h\}_{h\in H},\{\mathfrak{Z}'_g\}_{g\in G},\{\mathfrak{Z}''_e\}_{e\in E})$ in $C^b_{\rig,K_0}$ as before. Moreover we have a commutative diagram
\begin{equation}\label{bigdiag}
{\small\xymatrix{
&\RG_{\rig}(\mathcal{X};\{\mathfrak{Z}_h\}_h)\ar[rd]\ar[ldd]\ar[dd]&\\
&&\RG_{\rig}(f;\{\mathfrak{Z}_h\}_h,\{\mathfrak{Z}'_g\}_g)\ar[ld]\\
\RG_{\rig}(f\circ g;\{\mathfrak{Z}_h\}_h,\{\mathfrak{Z}''_e\}_e)\ar[r]^<<<<<{\cong}_<<<<{(\sharp)}&\RG_{\rig}(f,g;\{\mathfrak{Z}_h\}_h,\{\mathfrak{Z}'_g\}_g,\{\mathfrak{Z}''_e\}_e)&\RG_{\rig}(\mathcal{X}';\{\mathfrak{Z}'_g\}_g)\ar[u]_{\cong}\ar[d]\ar[l]\\
&&\RG_{\rig}(g;\{\mathfrak{Z}'_g\}_g,\{\mathfrak{Z}''_e\}_e)\ar[lu]^{(\sharp)}_{\cong}\\
&\RG_{\rig}(\mathcal{X}'';\{\mathfrak{Z}''_e\}_e)\ar[luu]^{\cong}\ar[ru]_{\cong}\ar[uu]^{\cong}_{(\sharp)}&
}}\end{equation}
in $C^b_{\rig,K_0}$.
So the construction in the last subsection is compatible with composition.

\begin{remark}
That the maps with $(\sharp)$ in the diagram \eqref{bigdiag} are quasi-isomorphic can be showed by local description as the proof of Lemma \ref{funclemma}. 
\end{remark}

Considering the case $f$ is identity and the case $g$ is identity, we can show that $\RG_{\rig}(\mathcal{X},\{\mathfrak{Z}_h\}_{h\in H})$ and the construction \eqref{functorial} are independent of the choice of admissible liftings up to canonical quasi-isomorphisms. In other words, $\RG_{\rig}$ gives a functor from the category of strictly semistable schemes over $V$ to the derived category of $C^b_{\rig,K_0}$.

\section{Log rigid syntomic cohomology}
\subsection{Definition}\label{section4.1}

Let $\mathcal{X}$ be a strictly semistable scheme over $V$. Gro\ss e-Kl\"{o}nne proved that the base change of the log rigid complex over $K_0$ is quasi-isomorphic to that over $K$ (\cite{Grosse1} Theorem 3.4). This quasi-isomorphism depends on the choice of an uniformizer of $K$. We follow his construction. We refer the second section of \cite{Grosse1} for more details.

Let $\mathcal{N}_Y$ be the log structure on $Y$ defined as the inverse image of $\mathcal{N}_{\mathcal{X}}$.
Let $\{Y_i\}_{i\in I}$ be the irreducible components of $Y$. Choose an open covering $\{\mathcal{U}_h\}_{h\in H}$ of $\mathcal{X}$ with admissible liftings $\{\mathfrak{Z}_h\}_{h\in H}$. For $i\in I$ let $\mathcal{N}_j$ be the preimage of ideal of $Y_i$ in $Y$ by $\mathcal{N}_Y\rightarrow\mathcal{O}_Y$. Let $\mathcal{L}_j$ be the line bundle on $Y$ associated to a principal homogeneous space $\mathcal{N}_j$ over $\mathcal{O}_Y^{\times}$.

For a non-empty subset $J\subset I$, let $M_J=\bigcap_{j\in J}Y_j$, and let $\mathcal{N}_{M_J}$ be the inverse image of $\mathcal{N}_Y$ on $M_J$.
For $j\in J$ let
\begin{eqnarray*}
\mathcal{L}_J^j&=&\mathcal{L}_j\otimes_{\mathcal{O}_Y}\mathcal{O}_{M_J}\\
V_J^j&=&\Spec(\mathrm{Sym}_{\mathcal{O}_{M_J}}\mathcal{L}_J^j)\\
P_J^j&=&\Proj(\mathrm{Sym}_{\mathcal{O}_{M_J}}(\mathcal{O}_{M_J}\oplus\mathcal{L}_J^j)).
\end{eqnarray*}

For a non-empty subset $J'\subset J$ let
\[V_J^{J'}=\prod_{j\in J'}^{M_J}V_J^j,\hspace{10pt}P_J^{J'}=\prod_{j\in J'}^{M_J}P_J^j\]
and write $V_J=V_J^J$, $P_J=P_J^J$. Then there is a natural open immersion $V_J^{J'}\hookrightarrow P_J^{J'}$.
Let $N^j_{J,\infty}$ be the divisor on $P_J$ which is the pull-back of the divisor $P_J^j\setminus V_J^j$ on $P_J^j$. Let  $N^j_{J,0}$ be the divisor on $P_J$ which is the pull-back of the zero section divisor $M_J\rightarrow P_J^j$ on $P_J^j$. Let $D_J$ be the divisor on $P_J$ which is the pull-back of the divisor $M_J\cap\bigcup_{j\in I\setminus J}Y_j$ on $M_J$. Let $\mathcal{N}_{P_J}$ be the log structure on $P_J$ defined by the normal crossing divisor $\bigcup_{j\in J}N_{J,\infty}^j\cup\bigcup_{j\in J}N_{J,0}^j\cup D_J$. Let $\mathcal{N}_{V_J^{J'}}$ and $\mathcal{N}_{P_J^{J'}}$ be the inverse images of $\mathcal{N}_{P_J}$ on $V_J^{J'}$ and $P_J^{J'}$.
For $m\geq 0$ let
\[\Lambda_m=\{\lambda=(J_0(\lambda),\ldots,J_m(\lambda))\mid\emptyset\neq J_0(\lambda)\subsetneq\cdots\subsetneq J_m(\lambda)\subset I\}.\]
For $\lambda\in\Lambda_m$ let
\[V_{\lambda}=V_{J_m(\lambda)}^{J_0(\lambda)},\hspace{10pt}P_{\lambda}=P_{J_m(\lambda)}^{J_0(\lambda)}.\]
Let
\begin{eqnarray*}
(M_m,\mathcal{N}_{M_m})&=&\coprod_{\lambda\in\Lambda_m}(M_{J_m(\lambda)},\mathcal{N}_{M_{J_m(\lambda)}})\\
(V_m,\mathcal{N}_{V_m})&=&\coprod_{\lambda\in\Lambda_m}(V_{\lambda},\mathcal{N}_{V_{\lambda}})\\
(P_m,\mathcal{N}_{P_m})&=&\coprod_{\lambda\in\Lambda_m}(P_{\lambda},\mathcal{N}_{P_{\lambda}}).
\end{eqnarray*}

Then we have a simplicial fine $S_0$-log scheme $(M_{\bullet},\mathcal{N}_{M_{\bullet}})$ and a simplicial $S$-log scheme with boundary $(V_{\bullet},\mathcal{N}_{V_{\bullet}})\hookrightarrow(P_{\bullet},\mathcal{N}_{P_{\bullet}})$.

Take an open covering $\{\mathcal{U}_h\}_{h\in H}$ of $\mathcal{X}$ and admissible liftings $\mathfrak{Z}_h$'s of $\mathcal{U}_h$'s, and let $\mathfrak{U}_h$ be the weak completion of $\mathcal{U}_h$. Let $\mathfrak{Y}_h=\mathfrak{Z}_h\times_{\Spwf W[t]^{\dagger}}\Spwf W$. For $m\geq 0$ and $h\in H$, let $\mathfrak{Y}^{(m)}_h=\coprod_{\lambda\in\Lambda_m}\mathfrak{Y}_h$, $\mathfrak{U}_{m,h}=\coprod_{\lambda\in\Lambda_m}\mathfrak{U}_h$, and $P_{m,h}=P_m\times_Y\mathcal{U}_{h,k}$. Refining the covering $\{\mathcal{U}_h\}_{h\in H}$ if necessary, we can take boundary exact closed immersions $((V_{m,h},\mathcal{N}_{V_{m,h}})\hookrightarrow(P_{m,h},\mathcal{N}_{P_{m,h}}))\rightarrow((\mathcal{V}_{m,h},\mathcal{N}_{\mathcal{V}_{m,h}})\hookrightarrow(\mathcal{P}_{m,h},\mathcal{N}_{\mathcal{P}_{m,h}}))$ into smooth $\widetilde{S}$-log schemes with boundary.
Then we have natural quasi-isomorphisms

\begin{eqnarray}\label{0boundary}
\nonumber&&\RG_0((Y,\mathcal{N}_Y);\{(\mathfrak{Y}_h,\mathcal{N}_{\mathfrak{Y}_h})\}_{h\in H})\\
\nonumber&\rightarrow&\RG_0((M_{\bullet},\mathcal{N}_{M_{\bullet}});\{(\mathfrak{Y}^{(m)}_h,\mathcal{N}_{\mathfrak{Y}^{(m)}_h})\}_{h\in H,~m\geq 0})\\
&\rightarrow&\RG_0(\mathrm{id}_{M_{\bullet}};\{(\mathfrak{Y}_h^{(m)},\mathcal{N}_{\mathfrak{Y}_h^{(m)}})\}_{h\in H,~m\geq 0},\{(\mathcal{V}_{m,h,W},\mathcal{N}_{\mathcal{V}_{m,h,W}})\}_{h\in H,~m\geq 0})\\
\nonumber&\leftarrow&\RG_0((M_{\bullet},\mathcal{N}_{M_{\bullet}});\{(\mathcal{V}_{m,h,W},\mathcal{N}_{\mathcal{V}_{m,h,W}})\}_{h\in H,~m\geq 0})\\
\nonumber&\leftarrow&\RG((V_{\bullet},\mathcal{N}_{V_{\bullet}})\hookrightarrow(P_{\bullet},\mathcal{N}_{P_{\bullet}});\{(\mathcal{V}_{m,h},\mathcal{N}_{\mathcal{V}_{m,h}})\hookrightarrow(\mathcal{P}_{m,h},\mathcal{N}_{\mathcal{P}_{m,h}})\}_{h\in H,~m\geq 0})
\end{eqnarray}
and
\begin{eqnarray}\label{boundaryK}
\nonumber&&\RG((V_{\bullet},\mathcal{N}_{V_{\bullet}})\hookrightarrow(P_{\bullet},\mathcal{N}_{P_{\bullet}});\{(\mathcal{V}_{m,h},\mathcal{N}_{\mathcal{V}_{m,h}})\hookrightarrow(\mathcal{P}_{m,h},\mathcal{N}_{\mathcal{P}_{m,h}})\}_{h\in H,~m\geq 0})\otimes K\\
\nonumber&\rightarrow&\RG_K((M_{\bullet},\mathcal{N}_{M_{\bullet}});\{(\mathcal{V}_{m,h,V},\mathcal{N}_{\mathcal{V}_{m,h,V}})\}_{h\in H,~m\geq 0})\\
&\rightarrow&\RG_K(\mathrm{id}_{M_{\bullet}};\{(\mathfrak{U}_{m,h},\mathcal{N}_{\mathfrak{U}_{m,h}})\}_{h\in H,~m\geq 0},\{(\mathcal{V}_{m,h,V},\mathcal{N}_{\mathcal{V}_{m,h,V}})\}_{h\in H,~m\geq 0})\\
\nonumber&\leftarrow&\RG_K((M_{\bullet},\mathcal{N}_{M_{\bullet}});\{(\mathfrak{U}_{m,h},\mathcal{N}_{\mathfrak{U}_{m,h}})\}_{h\in H,~m\geq 0})\\
\nonumber&\leftarrow&\RG_K((Y,\mathcal{N}_Y);\{(\mathfrak{U}_h,\mathcal{N}_{\mathfrak{U}_h})\}_{h\in H})\\
\nonumber&\leftarrow&\RG_K((Y,\mathcal{N}_Y);\{(\mathfrak{X},\mathcal{N}_{\mathfrak{X}})\}).
\end{eqnarray}

And we have natural maps
\begin{eqnarray}\label{KdR}
\nonumber&&\RG_K((Y,\mathcal{N}_Y);\{(\mathfrak{X},\mathcal{N}_{\mathfrak{X}})\})\xleftarrow[]{(\diamondsuit)}\Gamma(\mathfrak{X}_K,\Gd_{\an}\Omega^{\log,\bullet}_{\mathfrak{X}})\leftarrow\Gamma(\mathfrak{X}_K,\Gd_{\an}\Omega^{\bullet}_{\mathfrak{X}})\\
&=&\Gamma(X^{\an},i_*\Gd_{\an}\Omega^{\bullet}_{\mathfrak{X}})\leftarrow\Gamma(X^{\an},\Gd_{\an}\Omega^{\bullet}_{X^{\an}})\leftarrow\Gamma(X^{\an},\Gd_{\an}w^*\Omega^{\bullet}_X)\\
\nonumber&\xleftarrow[]{(\flat)}&\Gamma(X,\Gd_{\an+\mathrm{Zar}}\Omega^{\bullet}_X)\xrightarrow[]{(\natural)}\Gamma(X,\Gd_{\mathrm{Zar}}\Omega_X^{\bullet})\rightarrow\RG_{\dR}(X/K).
\end{eqnarray}
Here, $\Gd_{\mathrm{Zar}}$ and $\Gd_{\an+\mathrm{Zar}}$ are Godement resolutions associated to $P_{\mathrm{Zar}}(X)\rightarrow X_{\mathrm{Zar}}$ and $P_{\mathrm{Zar}}(X)\coprod Pt(X^{\an})\rightarrow X_{\mathrm{Zar}}$, where $P_{\mathrm{Zar}}(X)$ is the set of Zariski points of $X$ with discrete topology, $X_{\mathrm{Zar}}$ is the Zariski site of $X$. The maps $(\flat)$ and $(\natural)$ are obtained by \cite{CCM} Proposition 4.9. $\RG_{\dR}(X/K)$ is the derived de Rham cohomology of $X$, which is an object in $C^b_{\dR,K}$ (\cite{Beilinson2} section 3.4). Note that these maps are always quasi-isomorphic except $(\diamondsuit)$.

By quasi push-out construction (\cite{CCM} Remark 2.12), we obtain the $p$-adic Hodge complex $\RG_{\Hdg}(\mathcal{X})$ associated to $\mathcal{X}$ from the maps \eqref{rig0}, \eqref{0boundary}, \eqref{boundaryK}, and \eqref{KdR}. By the functoriality of $\RG_{\rig}$ proved in previous subsections and the functoriality of $\RG_0$ and $\RG_K$, it is independent of all of the choice of data up to canonical quasi-isomorphisms, and defines a functor from the category of strictly semistable schemes over $V$ to $\pHD$.

We consider the condition
\begin{center}
(HK): the map $(\diamondsuit)$ in \eqref{KdR} is quasi-isomorphic.
\end{center}
When $\mathcal{X}$ satisfies (HK), $\RG_{\Hdg}(\mathcal{X})$ represents an object in $\widetilde{\pHD}$.  We define the $p$-adic Hodge cohomology group of $\mathcal{X}$ by
\[H^i_{\Hdg}(\mathcal{X},n)=H^i(\RG_{\Hdg}(\mathcal{X})(n))\]
which is an object in $\MF$.

\begin{definition}[log rigid syntomic cohomology]
For a strictly semistable scheme $\mathcal{X}$ over $V$, we define the log rigid syntomic cohomology group by
\[H^i_{\syn}(\mathcal{X},n)=\Ext^i_{\pHD}(K_0,\RG_{\Hdg}(\mathcal{X})(n))=\Hom_{\pHD}(K_0,\RG_{\Hdg}(\mathcal{X})(n)[i]).\]
\end{definition}

\subsection{Properties}
For a strictly semistable scheme $\mathcal{X}$ over $V$, put
\begin{eqnarray*}
H^i_{\mathcal{A}}(\mathcal{X},n)&=&H^i(\mathcal{A}^{\bullet}_0(\RG_{\Hdg}(\mathcal{X})(n)))\\
&=&H^i_{\mathrm{rig}}(Y/K_0)\oplus F^nH^i_{\mathrm{dR}}(X/K)\\
H^i_{\mathcal{B}}(\mathcal{X},n)&=&H^i(\mathcal{B}^{\bullet}_0(\RG_{\Hdg}(\mathcal{X})(n)))\\
&=&H^i_{\mathrm{rig}}(Y/K_0)\oplus H^i_{\mathrm{rig}}(Y/K_0)\oplus H^i_{\mathrm{rig}}(Y/K)\\
H^i_{\mathcal{C}}(\mathcal{X},n)&=&H^i(\mathcal{C}^{\bullet}_0(\RG_{\Hdg}(\mathcal{X})(n)))\\
&=&H^i_{\mathrm{rig}}(Y/K_0)\\
H^i_{\alpha}(\mathcal{X},n)&=&H^i(\Cone\Phi_0(\RG_{\Hdg}(\mathcal{X},n)))\\
H^i_{\beta}(\mathcal{X},n)&=&H^i(\Cone\Psi_0(\RG_{\Hdg}(\mathcal{X},n))).
\end{eqnarray*}

The following proposition is concluded easily from the definition.
\begin{proposition}
For strictly semistable scheme $\mathcal{X}$ over $V$, there exist long exact sequences as follows.
\[\xymatrix{
&&\vdots\ar[d]&\vdots\ar[d]&&\\
\cdots\ar[r]&H_{\mathcal{B}}^i(\mathcal{X},n)\ar[r]\ar@{=}[d]&H_{\alpha}^i(\mathcal{X},n)\ar[r]\ar[d]&H_{\mathcal{A}}^{i+1}(\mathcal{X},n)\ar[r]^{\Phi}\ar[d]&H_{\mathcal{B}}^{i+1}(\mathcal{X},n)\ar[r]\ar@{=}[d]&\cdots\\
\cdots\ar[r]&H_{\mathcal{B}}^i(\mathcal{X},n)\ar[r]^{\Psi}&H_{\mathcal{C}}^i(\mathcal{X},n)\ar[r]\ar[d]&H_{\beta}^i(\mathcal{X},n)\ar[r]\ar[d]&H_{\mathcal{B}}^{i+1}(\mathcal{X},n)\ar[r]&\cdots\\
&&H_{\syn}^{i+2}(\mathcal{X},n)\ar[d]\ar@{=}[r]&H_{\syn}^{i+2}(\mathcal{X},n)\ar[d]&&\\
\cdots\ar[r]&H_{\mathcal{B}}^{i+1}(\mathcal{X},n)\ar[r]&H_{\alpha}^{i+1}(\mathcal{X},n)\ar[r]\ar[d]&H_{\mathcal{A}}^{i+2}(\mathcal{X},n)\ar[r]\ar[d]&H_{\mathcal{B}}^{i+2}(\mathcal{X},n)\ar[r]&\cdots\\
&&\vdots&\vdots&&
}\]
\end{proposition}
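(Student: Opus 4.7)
The plan is to obtain the four long exact sequences appearing in the diagram from the four standard distinguished triangles associated to the double complex $\mathcal{A}_0^{\bullet}\xrightarrow{\Phi_0}\mathcal{B}_0^{\bullet}\xrightarrow{\Psi_0}\mathcal{C}_0^{\bullet}$ and its total complex $\Lambda_0^{\bullet}$, then to identify the cohomology of $\Lambda_0^{\bullet}(\RG_{\Hdg}(\mathcal{X})(n))$ with the syntomic cohomology via Theorem \ref{exthdg}.

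First, abbreviate $M^{\bullet}=\RG_{\Hdg}(\mathcal{X})(n)$. From the definition of the mapping cone, we have the following distinguished triangles in $D^b(\Vec_{\cQ_p})$:
\begin{align*}
\mathcal{A}_0^{\bullet}(M^{\bullet})\xrightarrow{\Phi_0}\mathcal{B}_0^{\bullet}(M^{\bullet})&\rightarrow\Cone\Phi_0\rightarrow\mathcal{A}_0^{\bullet}(M^{\bullet})[1],\\
\mathcal{B}_0^{\bullet}(M^{\bullet})\xrightarrow{\Psi_0}\mathcal{C}_0^{\bullet}(M^{\bullet})&\rightarrow\Cone\Psi_0\rightarrow\mathcal{B}_0^{\bullet}(M^{\bullet})[1],\\
\Cone\Phi_0\rightarrow\mathcal{C}_0^{\bullet}(M^{\bullet})&\rightarrow\Lambda_0^{\bullet}(M^{\bullet})\rightarrow\Cone\Phi_0[1],\\
\mathcal{A}_0^{\bullet}(M^{\bullet})[1]\rightarrow\Cone\Psi_0&\rightarrow\Lambda_0^{\bullet}(M^{\bullet})\rightarrow\mathcal{A}_0^{\bullet}(M^{\bullet})[2],
\end{align*}
where the last two come from the two descriptions of $\Lambda_0^{\bullet}$ given in Remark \ref{coneremark}. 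Passing to cohomology yields precisely the four rows/columns in the claimed diagram, once we identify $H^i(\Lambda_0^{\bullet}(M^{\bullet}))$ with $H^{i+2}_{\syn}(\mathcal{X},n)$.

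This identification proceeds in two steps. By Remark \ref{coneremark}, the natural map $\Lambda^{\bullet}(K_0,M^{\bullet})\rightarrow\Lambda_0^{\bullet}(M^{\bullet})$ is a quasi-isomorphism, so $H^i(\Lambda_0^{\bullet}(M^{\bullet}))\cong H^i(\Lambda^{\bullet}(K_0,M^{\bullet}))=H^{i+2}(\Lambda^{\bullet}(K_0,M^{\bullet})[-2])$. Since the Frobenius on the rigid specialization of $K_0$ is the identity (hence an automorphism) and its comparison maps are trivially the identity, Theorem \ref{exthdg} applies and gives $H^{i+2}(\Lambda^{\bullet}(K_0,M^{\bullet})[-2])\cong\Ext^{i+2}_{\pHD}(K_0,M^{\bullet})=H^{i+2}_{\syn}(\mathcal{X},n)$ by the very definition of log rigid syntomic cohomology.

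Finally, it remains only to assemble the four resulting long exact sequences into the diagram in the statement; the vertical equalities $H_{\mathcal{B}}^i=H_{\mathcal{B}}^i$ and $H_{\syn}^{i+2}=H_{\syn}^{i+2}$ simply glue the triangles along their common terms. The identifications of $H^i_{\mathcal{A}},H^i_{\mathcal{B}},H^i_{\mathcal{C}}$ with the rigid and filtered de Rham cohomology groups of $\mathcal{X}$ follow from the explicit description of the specializations of $\RG_{\Hdg}(\mathcal{X})(n)$ given in section \ref{section4.1}. No step presents a serious obstacle; the only bookkeeping point to watch is the degree shift by $2$ between $\Lambda_0^{\bullet}$ and $H^{\ast}_{\syn}$.
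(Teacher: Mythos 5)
Your argument is correct and is exactly the reasoning the paper leaves implicit (its ``proof'' is just ``concluded easily from the definition''): the four long exact sequences come from the cone triangles of $\Phi_0$, $\Psi_0$, and the two descriptions of $\Lambda_0^{\bullet}$, together with the identification $H^i(\Lambda_0^{\bullet}(\RG_{\Hdg}(\mathcal{X})(n)))\cong H^{i+2}_{\syn}(\mathcal{X},n)$ via Remark \ref{coneremark} and Theorem \ref{exthdg}. Your degree bookkeeping and the verification of the hypotheses of Theorem \ref{exthdg} for $L^{\bullet}=K_0$ are both fine.
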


\begin{proposition}[Leray spectral sequence]
If $\mathcal{X}$ satisfies (HK), there exists a spectral sequence
\begin{equation}\label{spectral}
E_2^{i,j}=\Ext^i_{\MF}(K_0,H^j_{\Hdg}(\mathcal{X},n))\Rightarrow H^{i+j}_{\syn}(\mathcal{X},n)
\end{equation}
degenerating at $E_3$.
\end{proposition}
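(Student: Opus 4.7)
The plan is to transport the computation along the equivalence $\Theta$ of Theorem \ref{cateqthm}, invoke the standard hyper-$\Ext$ spectral sequence attached to a bounded object in a triangulated category with a $t$-structure, and then deduce degeneration at $E_3$ from the fact that $\MF$ has cohomological dimension at most $2$.

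Since $\mathcal{X}$ satisfies (HK), the object $\RG_{\Hdg}(\mathcal{X})(n)$ lies in $\widetilde{\pHD}$, so by Theorem \ref{cateqthm} there is $C^{\bullet}\in D^b(\MF)$ with $\Theta(C^{\bullet})\cong\RG_{\Hdg}(\mathcal{X})(n)$. Because $\Theta$ is a $t$-exact equivalence, its cohomology objects satisfy $H^j(C^{\bullet})=H^j_{\Hdg}(\mathcal{X},n)$, and it induces
\[H^{i+j}_{\syn}(\mathcal{X},n)=\Ext^{i+j}_{\pHD}(K_0,\RG_{\Hdg}(\mathcal{X})(n))\cong\Ext^{i+j}_{D^b(\MF)}(K_0,C^{\bullet}).\]
It therefore suffices to produce a spectral sequence
\[E_2^{i,j}=\Ext^i_{\MF}(K_0,H^j(C^{\bullet}))\Longrightarrow\Ext^{i+j}_{D^b(\MF)}(K_0,C^{\bullet})\]
degenerating at $E_3$.

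The existence of the spectral sequence is formal: I would work with the Postnikov tower associated to the truncation triangles $\tau_{\leq j-1}C^{\bullet}\rightarrow\tau_{\leq j}C^{\bullet}\rightarrow H^j(C^{\bullet})[-j]\xrightarrow{+1}$ in $D^b(\MF)$, which exist because $\MF$ is the heart of a non-degenerate $t$-structure on its bounded derived category. Applying $\Ext^{*}_{D^b(\MF)}(K_0,-)$ to this tower yields an exact couple whose associated spectral sequence has the advertised $E_2$-page and converges to the advertised abutment, thanks to the boundedness of $C^{\bullet}$.

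The substantive input is the degeneration, and this is where Theorem \ref{extmf} enters. When both $L^{\bullet}=K_0$ and $M^{\bullet}=M$ are concentrated in degree zero, the complex $\Gamma^{\bullet}(K_0,M)[-2]$ coming from \eqref{doublecomplex} reduces to the three-term complex
\[M\oplus F^0M_K\longrightarrow M\oplus M\oplus M_K\longrightarrow M\]
sitting in degrees $0,1,2$. Hence $\Ext^i_{\MF}(K_0,M)=0$ for $i\geq 3$, so $E_2^{i,j}=0$ whenever $i\geq 3$, and this vanishing propagates to every subsequent page. The differential $d_r:E_r^{i,j}\rightarrow E_r^{i+r,j-r+1}$ therefore lands in a zero group for every $r\geq 3$, forcing degeneration at $E_3$. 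I do not foresee a genuine obstacle here; the only non-trivial ingredient is the cohomological dimension bound, which is supplied directly by the three-term shape of $\Gamma^{\bullet}(K_0,-)[-2]$.
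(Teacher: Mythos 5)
Your argument is correct and reaches the same two pillars as the paper: transport to $D^b(\MF)$ along $\Theta$, a spectral sequence coming from the canonical (truncation) filtration, and degeneration at $E_3$ because $\Ext^i_{\MF}(K_0,-)$ vanishes for $i\geq 3$, which you rightly read off from the three-term complex $\Gamma^{\bullet}(K_0,M)[-2]$ of Theorem \ref{extmf}. The one genuine difference is how the spectral sequence is constructed. The paper takes the classical route: it invokes Deligne's construction (Th\'eorie de Hodge II, 1.4.5) applied to the canonical filtration of a representing complex, and since that construction wants injective resolutions it must address the fact that $\MF$ has no nonzero injectives; this is done by passing to $\Ind(\MF)$, which has enough injectives, and using that $D^b(\MF)$ sits fully faithfully inside the corresponding derived category (the references to Stauffler and Huber). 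You instead build the exact couple directly from the Postnikov tower of truncation triangles $\tau_{\leq j-1}C^{\bullet}\rightarrow\tau_{\leq j}C^{\bullet}\rightarrow H^j(C^{\bullet})[-j]$ in the triangulated category $D^b(\MF)$ and apply $\Hom_{D^b(\MF)}(K_0,-)$, with convergence from boundedness; this is purely formal in the triangulated category and so sidesteps the injectives issue entirely (you only need the standard identification $\Hom_{D^b(\MF)}(K_0,M[i])=\Ext^i_{\MF}(K_0,M)$, which Theorem \ref{extmf} supplies concretely). What the paper's route buys is a one-line citation; what yours buys is independence from the $\Ind$-category detour, which is the only delicate point the paper has to flag. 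Both give the same $E_2$-page after the same renumbering, and the $E_3$-degeneration argument is identical.
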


\begin{proof}
With renumbering, the spectral sequence \eqref{spectral} is associated to the canonical filtration of a representing complex of the object in $D^b(\MF)$ corresponding to $\RG_{\Hdg}(\mathcal{X})$ (cf. \cite{Deligne} 1.4.5). Note that $\MF$ does not have any injective objects except for $0$, but $\Ind(\MF)$ has enough injective (\cite{St} Theorem 2.2) and $D^b(\MF)$ is a full subcategory in $\Ind(\MF)$ (\cite{Huber2} Proposition 2.2). Since $E_2^{i,j}=0$ for $i\geq 3$, it degenerates at $E_3$.
\end{proof}

For smooth scheme $\mathcal{X}$ we denote the original rigid syntomic cohomology of Besser by $\widetilde{H}^i_{\syn}(\mathcal{X},n)$.

\begin{proposition}\label{syncomparison}
When $\mathcal{X}$ is smooth over $V$, there exists a direct decomposition
\[H^i_{\syn}(\mathcal{X},n)\cong\widetilde{H}^i_{\syn}(\mathcal{X},n)\oplus H^{i-2}(\Cone(1-\frac{\phi}{p^{n-1}}:\RG_{\rig}(\mathcal{X})\rightarrow\RG_{\rig}(\mathcal{X}))).\]
Here $\RG_{\rig}(\mathcal{X})$ is a (log) rigid complex of $\mathcal{X}$ (for a choice of a family of local embeddings).
In particular, if $\mathcal{X}$ is smooth projective and $2n-i\neq 0,1$, then $H^i_{\syn}(\mathcal{X},n)$ and $\widetilde{H}^i_{\syn}(\mathcal{X},n)$ coinside.
\end{proposition}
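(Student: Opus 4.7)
The plan is to unwind the definition using Theorem \ref{exthdg} and Remark \ref{coneremark}, which give
\[H^i_{\syn}(\mathcal{X},n)\cong H^i\bigl(\Lambda_0^{\bullet}(\RG_{\Hdg}(\mathcal{X})(n))[-2]\bigr),\]
and then exploit the vanishing $N=0$ on the rigid specialization that comes with smoothness of $\mathcal{X}$. Write $M^{\bullet}_{\rig}$, $M^{\bullet}_K$, $M^{\bullet}_{\dR}$ for the three specializations of $\RG_{\Hdg}(\mathcal{X})$. The Tate twist by $(n)$ rescales the Frobenius to $\phi/p^n$ and shifts the filtration so that $F^0M^{\bullet}_{\dR}(n)=F^nM^{\bullet}_{\dR}$. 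Substituting $N=0$ into the formulas for $\Phi_0$ and $\Psi_0$ from Remark \ref{coneremark} yields
\[
\Phi_0(x,z)=\bigl(0,\,(1-\phi/p^n)(x),\,\alpha(x)-\beta(z)\bigr),\qquad \Psi_0(u,v,w)=(1-\phi/p^{n-1})(u),
\]
so $\Phi_0$ has zero first component while $\Psi_0$ depends only on the first summand of $\mathcal{B}_0^{\bullet}$.

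Consequently the double complex $\mathcal{A}_0^{\bullet}\xrightarrow{\Phi_0}\mathcal{B}_0^{\bullet}\xrightarrow{\Psi_0}\mathcal{C}_0^{\bullet}$ splits as a direct sum $C_1^{\bullet}\oplus C_2^{\bullet}$, where $C_1^{\bullet}$ is $\mathcal{A}_0^{\bullet}\xrightarrow{\Phi_0'}M^{\bullet}_{\rig}\oplus M^{\bullet}_K$ in horizontal degrees $0,1$ with $\Phi_0'(x,z)=((1-\phi/p^n)(x),\alpha(x)-\beta(z))$, and $C_2^{\bullet}$ is $M^{\bullet}_{\rig}\xrightarrow{1-\phi/p^{n-1}}M^{\bullet}_{\rig}$ in horizontal degrees $1,2$. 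The total complex of $C_1^{\bullet}$ equals
\[
\Cone\Bigl(M^{\bullet}_{\rig}\oplus F^nM^{\bullet}_{\dR}\xrightarrow{\Phi_0'}M^{\bullet}_{\rig}\oplus M^{\bullet}_K\Bigr)[-1],
\]
which is Besser's rigid syntomic complex in the reformulation of \cite{Bannai} and \cite{CCM}, so its $i$-th cohomology is $\widetilde{H}^i_{\syn}(\mathcal{X},n)$. The total complex of $C_2^{\bullet}$ equals $\Cone(1-\phi/p^{n-1}\colon M^{\bullet}_{\rig}\to M^{\bullet}_{\rig})[-2]$, whose $i$-th cohomology is $H^{i-2}(\Cone(1-\phi/p^{n-1}))$. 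Summing gives the asserted decomposition.

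For the last assertion, suppose $\mathcal{X}$ is smooth projective. The long exact sequence attached to $\Cone(1-\phi/p^{n-1})$ shows that $H^{i-2}(\Cone(1-\phi/p^{n-1}))$ vanishes once $1-\phi/p^{n-1}$ is bijective on both $H^{i-2}_{\rig}(Y/K_0)$ and $H^{i-1}_{\rig}(Y/K_0)$. By the Weil conjectures for rigid cohomology, the eigenvalues of $\phi$ on $H^j_{\rig}(Y/K_0)$ are Weil numbers of weight $j$, so $\phi/p^{n-1}$ has no eigenvalue equal to $1$ unless $j=2(n-1)$. This exceptional value is avoided simultaneously for $j=i-2$ and $j=i-1$ precisely when $2n-i\notin\{0,1\}$, and the coincidence follows. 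The main obstacle I anticipate is the careful verification that the total complex of $C_1^{\bullet}$ matches Besser's complex on the nose; once the sign and shift conventions are aligned this is routine, but care is needed since several equivalent formulations appear in the literature.
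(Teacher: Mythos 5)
Your argument is correct and is essentially the paper's own proof: both exploit $N=0$ on the rigid specialization for smooth $\mathcal{X}$ to split the complex $\Lambda_0^{\bullet}(\RG_{\Hdg}(\mathcal{X})(n))$ of Remark \ref{coneremark} into the Bannai--CCM reformulation of Besser's rigid syntomic complex plus a shifted copy of $\Cone(1-\phi/p^{n-1})$ on $\RG_{\rig}(\mathcal{X})$, and both conclude the projective case from Katz--Messing (the paper cites \cite{CCM} Proposition 5.10 and Remark 2.9 (iii) for the identification with $\widetilde{H}^i_{\syn}$, which is the step you flagged as needing alignment of conventions).
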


\begin{proof}
We write specializations of $\RG_{\Hdg}(\mathcal{X})$ by $M_{\rig}^{\bullet}$, $M_K^{\bullet}$, and $M_{\dR}^{\bullet}$. By Remark \ref{coneremark}, \cite{CCM} Proposition 5.10 and Remark 2.9 (iii), $H^i_{\syn}(\mathcal{X},n)$ and $\widetilde{H}^i_{\syn}(\mathcal{X},n)$ are computed as the cohomology groups of the total complexes of
\begin{eqnarray}\label{cpx1}
M_{\rig}^{\bullet}\oplus F^0M_{\dR}^{\bullet}\rightarrow&M_{\rig}^{\bullet}\oplus M_{\rig}^{\bullet}\oplus M_K^{\bullet}&\rightarrow M_{\rig}^{\bullet}\\
\nonumber(x,y)\mapsto&(0,x-\phi(x),x-y)&\\
\nonumber&(x,y,z)&\mapsto x-p\phi(x)
\end{eqnarray}
and
\begin{eqnarray}\label{cpx2}
M_{\rig}^{\bullet}\oplus F^0M_{\dR}^{\bullet}&\rightarrow&M_{\rig}^{\bullet}\oplus M_K^{\bullet}\\
\nonumber(x,y)&\mapsto&(x-\phi(x),x-y)
\end{eqnarray}
respectively. Since \eqref{cpx1} splits to a direct sum of \eqref{cpx2} and a translation of
\begin{equation}\label{cpx3}
1-p\phi:M_{\rig}^{\bullet}\rightarrow M_{\rig}^{\bullet},
\end{equation}
we have
\begin{eqnarray*}
H^i_{\syn}(\mathcal{X},n)&=&H^i(\Cone\eqref{cpx2}[-1])\oplus H^i(\Cone\eqref{cpx3}[-2])\\
&=&\widetilde{H}^i_{\syn}(\mathcal{X},n)\oplus H^{i-2}(\Cone(1-\frac{\phi}{p^{n-1}}:\RG_{\rig}(\mathcal{X})\rightarrow\RG_{\rig}(\mathcal{X}))).
\end{eqnarray*}
$H^{i-2}(\Cone(1-\frac{\phi}{p^{n-1}}:\RG_{\rig}(\mathcal{X})\rightarrow\RG_{\rig}(\mathcal{X})))$ equals zero if and only if $1-\frac{\phi}{p^{n-1}}$ on $H^{i-2}_{\rig}(\mathcal{X})$ is surjective and on $H^{i-1}_{\rig}(\mathcal{X})$ is injective. So the statement follows from a consequence of the Weil conjecture (cf. \cite{KM}).
\end{proof}

\subsection{Log syntomic cohomology of a simplicial strictly semistable scheme} 
Let $\mathcal{X}_{\bullet}$ be a simplicial strictly semistable scheme over $V$ such that each $\mathcal{X}_i$ satisfies (HK). By the functoriality, we have canonical maps
\[\RG_{\Hdg}(\mathcal{X}_0)\rightarrow\RG_{\Hdg}(\mathcal{X}_1)\rightarrow\RG_{\Hdg}(\mathcal{X}_2)\rightarrow\cdots.\]
in $\pHD$. Let
\begin{equation}\label{seqD}
\RG'_{\Hdg}(\mathcal{X}_0)\rightarrow\RG'_{\Hdg}(\mathcal{X}_1)\rightarrow\RG'_{\Hdg}(\mathcal{X}_2)\rightarrow\cdots
\end{equation}
be the corresponding sequence in $D^b(\MF)$. Fix a representing complex $M_i^{\bullet}$ of $\RG'_{\Hdg}(\mathcal{X}_i)$ for each $i$. For every $i\geq 0$ we can take maps
\[M_i^{\bullet}\rightarrow N_{i+1}^{\bullet}\xleftarrow[]{\cong}M_{i+1}^{\bullet}\]
such that they represent $M^{\bullet}_i\rightarrow M^{\bullet}_{i+1}$, and the second leftwards arrow is quasi-isomorphic.
Put $L_0^{\bullet}=M_0^{\bullet}$ and $L_1^{\bullet}=N_1^{\bullet}$. 
For $i\geq 2$ let $L_i^{\bullet}$ be the quasi-pushout of
\[L_{i-1}^{\bullet}\leftarrow M^{\bullet}_{i-1}\rightarrow N^{\bullet}_i.\]
Then we obtain the sequence
\begin{equation}\label{seqC}
L_0^{\bullet}\rightarrow L^{\bullet}_1\rightarrow L^{\bullet}_2\rightarrow\cdots
\end{equation}
in $C^b(\MF)$ with quasi-isomorphisms $M_i^{\bullet}\rightarrow L_i^{\bullet}$. Note that in $D^b(\MF)$, \eqref{seqC} is isomorphic to \eqref{seqD}.
When we have another sequence
\[L'^{\bullet}_0\rightarrow L'^{\bullet}_1\rightarrow L'^{\bullet}_2\rightarrow\cdots,\]
we have a commutative diagram
\[\xymatrix{
L_0^{\bullet}\ar[r]\ar[d]&L_1^{\bullet}\ar[r]\ar[d]&L_2^{\bullet}\ar[r]\ar[d]&\cdots\\
L''^{\bullet}_0\ar[r]&L''^{\bullet}_1\ar[r]&L''^{\bullet}_2\ar[r]&\cdots\\
L'^{\bullet}_0\ar[r]\ar[u]&L'^{\bullet}_1\ar[r]\ar[u]&L'^{\bullet}_2\ar[r]\ar[u]&\cdots,
}\]
here $L''^{\bullet}_i$ is the quasi-push out of the diagram $L^{\bullet}_i\leftarrow M^{\bullet}_i\rightarrow L'^{\bullet}_i$, and the vertical arrows are quasi-isomorphic. Therefore the image of the total complex of \eqref{seqC} in $D^b(\MF)$ is independent of all choices, we denote that by $\RG'_{\Hdg}(\mathcal{X}_{\bullet})$.
We define the log rigid syntomic cohomology of $\mathcal{X}_{\bullet}$ by
\[H^i_{\syn}(\mathcal{X}_{\bullet},n)=\Hom_{D^b(\MF)}(K_0,\RG'_{\Hdg}(\mathcal{X}_{\bullet})(n)[i]).\]

\subsection{Chern class map}
Let $\mathcal{X}$ be a strictly semistable scheme satisfying (HK). We construct the functorial Chern class maps
\[c_{\syn}:K_i(\mathcal{X})\rightarrow H^{2j-i}_{\syn}(\mathcal{X},j)\]
in the similar way to \cite{Besser} Theorem 7.5 based on Huber's method.

Let
\[c^{\mathrm{univ}}_j\in\widetilde{H}^{2j}_{\syn}(\BGL,j)=\varinjlim_n\widetilde{H}^{2j}_{\syn}(\BGL_n,j)\]
be the universal Chern class constructed in the section 7 of \cite{Besser}. We also write $c^{\mathrm{univ}}_j$ its image in $H^{2j}_{\syn}(\BGL,j)$ through the canonical inclusion given by Proposition \ref{syncomparison}.

For $i\geq 0$ we have maps
\begin{equation}\label{map1}
K_i(\mathcal{X})\xleftarrow[]{\cong}\varinjlim_{\mathcal{U}_{\bullet}}\pi_i\Tot(\cZ\times\cZ_{\infty}\BGL(\mathcal{U}_{\bullet}))\rightarrow\varinjlim_{\mathcal{U}_{\bullet}}\pi_i\Tot\cZ_{\infty}\BGL(\mathcal{U}_{\bullet})
\end{equation}
where $\mathcal{U}_{\bullet}$ runs through all finite open covering of $\mathcal{X}$ considered as affine simplicial schemes, and the first leftwards arrow is isomorphic.

For a simplicial cosimplicial abelian group $\{A_{m,n}\}_{m,n\in\cN}$, write the associated complex by $\Ass(A_{\bullet,\bullet})$. Let
\[\Hom(\mathcal{U}_m,\mathrm{B}_n\GL)=\varinjlim_{r}\Hom(\mathcal{U}_m,\mathrm{B}_n\GL_r).\]
Then there are natural maps of simplicial sets
\[\cZ_{\infty}[\BGL(\mathcal{U}_n)]\rightarrow\cZ\cZ_{\infty}[\BGL(\mathcal{U}_n)]\xleftarrow[]{\cong}\cZ[\BGL(\mathcal{U}_n)]\rightarrow \cQ[\Hom(\mathcal{U}_n,\mathrm{B}_{\bullet}\GL)]\]
which induce a map
\begin{equation}\label{map2}
\pi_i\Tot\cZ_{\infty}\BGL(\mathcal{U}_{\bullet})\rightarrow H^{-i}(\Ass(\cQ[\Hom(\mathcal{U}_{\bullet},\mathrm{B}_{\bullet}\GL)])).
\end{equation}

For objects $L^{\bullet}$ and $M^{\bullet}$ in $C^b(\MF)$ we have a map of complexes 
\[\cQ[\Hom_{C^b(\MF)}(L^{\bullet},M^{\bullet})]\rightarrow\Gamma^{\bullet}(L^{\bullet},M^{\bullet})[-2]\]
sending $f\in\Hom_{C^b(\MF)}(L^{\bullet},M^{\bullet})$ to
\begin{eqnarray*}(f,f,0,0,0,0)&\in&\iHom^0(L^{\bullet},M^{\bullet})\oplus F^0\iHom^0(L_K^{\bullet},M_K^{\bullet})\oplus\iHom^{-1}(L^{\bullet},M^{\bullet})\\
&&\oplus\iHom^{-1}(L^{\bullet},M^{\bullet})\oplus\iHom^{-1}(L_K^{\bullet},M_K^{\bullet})\oplus\iHom^{-2}(L^{\bullet},M^{\bullet})\\
&=&\Gamma^{-2}(L^{\bullet},M^{\bullet}).
\end{eqnarray*}
By taking direct limit for quasi-isomorphisms $M^{\bullet}\rightarrow M'^{\bullet}$, we obtain a map
\[\cQ[\Hom_{D^b(\MF)}(L^{\bullet},M^{\bullet})]\rightarrow\varinjlim_{M^{\bullet}\rightarrow M'^{\bullet}:\text{qis}}\Gamma^{\bullet}(L^{\bullet},M'^{\bullet})[-2]\cong\Gamma^{\bullet}(L^{\bullet},M^{\bullet})[-2]\]
in $D^b(\Ab)$.
Since the total complex of the double complex associated to
\[\{\Gamma^{\bullet}(\RG'_{\Hdg}(\mathrm{B}_n\GL_r),\RG'_{\Hdg}(\mathcal{U}_m))\}_{m,n}\]
is equal to $\Gamma^{\bullet}(\RG'_{\Hdg}(\BGL_r),\RG'_{\Hdg}(\mathcal{U}_{\bullet}))$, the maps of complexes
\begin{eqnarray*}
\cQ[\Hom(\mathcal{U}_m,\mathrm{B}_n\GL_r)]&\rightarrow&\cQ[\Hom_{D^b(\MF)}(\RG'_{\Hdg}(\mathrm{B}_n\GL_r),\RG'_{\Hdg}(\mathcal{U}_m))]\\
&\rightarrow&\Gamma^{\bullet}(\RG'_{\Hdg}(\mathrm{B}_n\GL_r),\RG'_{\Hdg}(\mathcal{U}_m))\end{eqnarray*}
induce
\[\Ass(\cQ[\Hom(\mathcal{U}_{\bullet},\BGL_r)])\rightarrow\Gamma^{\bullet}(\RG'_{\Hdg}(\BGL_r),\RG'_{\Hdg}(\mathcal{U}_{\bullet}))\]
in $D^b(\Ab)$. Then we get a map
\begin{eqnarray}\label{map3}
H^{-i}(\Ass(\cQ[\Hom(\mathcal{U}_{\bullet},\BGL)]))&\rightarrow&\varinjlim_r\Ext^{-i}(\RG'_{\Hdg}(\BGL_r),\RG'_{\Hdg}(\mathcal{U}_{\bullet}))\\
\nonumber&=&\varinjlim_r\Hom_{D^b(\MF)}(\RG'_{\Hdg}(\BGL_r),\RG'_{\Hdg}(\mathcal{U}_{\bullet})[-i])
\end{eqnarray}

Composing \eqref{map1}, \eqref{map2} and \eqref{map3}, we obtain
\[\alpha:K_i(\mathcal{X})\rightarrow\varinjlim_r\Hom_{D^b(\MF)}(\RG'_{\Hdg}(\BGL_r),\RG'_{\Hdg}(\mathcal{X})[-i])\]
since we have a natural isomorphism $\RG_{\Hdg}'(\mathcal{X})\rightarrow\RG_{\Hdg}'(\mathcal{U}_{\bullet})$.
We define the syntomic Chern class map
\[c_{\syn}:K_i(\mathcal{X})\rightarrow H^{2j-i}_{\syn}(\mathcal{X},j)\]
by assigning $\alpha(x)(c_j^{\mathrm{univ}})$ to $x$.

\end{document}